\newtheorem{Theorem}{Theorem}[section]
\newtheorem{Lemma}[Theorem]{Lemma}
\newtheorem{Corollary}[Theorem]{Corollary}
\newtheorem{Proposition}[Theorem]{Proposition}
\newtheorem{Remark}[Theorem]{Remark}
\newtheorem*{rem*}{Remark}
\newtheorem{Example}[Theorem]{Example}
\def\ker{\operatorname{ker}}
\def\lex{{\operatorname{lex}}}
\def\Spec{\operatorname{Spec}}
\def\depth{\operatorname{depth}}
\def\hom{\operatorname{hom}}
\def\reg{\operatorname{reg}}
\def\Tor{\operatorname{Tor}}
\def\height{\operatorname{ht}}
\def\NP{{\operatorname{N\PP}}}
\def\mm{{\mathfrak m}}
\def\RR{{\mathbb R}}
\def\QQ{{\mathbb Q}}
\def\PP{{\mathbb P}}
\def\ZZ{{\mathbb Z}}
\def\NN{{\mathbb N}}
\def\w{{\bold w}}
\def\a{{\alpha}}
\newcommand{\x}{X}
\newcommand{\Kx}{k[\x]}
\begin{document}

\title{Toward a theory of monomial preorders}

\author{Gregor Kemper}
\address{Technische Universi\"at M\"unchen, Zentrum Mathematik - M11,
Boltzmannstr. 3, 85748 Garching, Germany}
\email{kemper@ma.tum.de}

\author{Ngo Viet Trung}
\address{Institute of Mathematics, Vietnam Academy of Science and Technology, 18 Hoang Quoc Viet, 10307 Hanoi, Vietnam}
\email{nvtrung@math.ac.vn}
 
\author{Nguyen Thi Van Anh}
\address{University of Osnabr\"uck, Institut f\"ur Mathematik, Albrechtstr. 28 A, 49076 Osnabr\"uck, Germany}
\email{ngthvanh@gmail.com}

\thanks{The second author is supported by Vietnam National Foundation for Science and Technology Development under grant number 101.04-2017.19.
A large part of the paper was completed during a long term visit of the second author to Vietnam Institute of Advanced Study on Mathematics.} 

\keywords{monomial order, monomial preorder, weight order, leading ideal, standard basis, flat deformation, dimension, descent of properties, regular locus, normal locus, Cohen-Macaulay locus, graded invariants, toric ring}
\subjclass{Primary 13P10; Secondary 14Qxx, 13H10}

\begin{abstract}
In this paper we develop a theory of monomial preorders, which differ from the classical notion of monomial orders in that they allow ties between monomials. 
Since for monomial preorders, the leading ideal is less degenerate than for monomial orders, 
our results can be used to study problems where monomial orders fail to give a solution.
Some of our results are new even in the classical case of monomial
  orders and in the special case in which the leading ideal defines the
  tangent cone. 
\end{abstract}
 
\maketitle

\section*{Introduction}

A monomial order or a monomial ordering is a total order on the monomials of a polynomial ring which is compatible with the product operation \cite{GP}.  Gr\"obner basis theory is based on monomial orders with the additional condition that 1 is less than all other monomials.  Using such a monomial order, one can associate to every ideal a leading ideal that has a simple structure and that can be used to get information on the given  ideal. 
This concept has been extended to an arbitrary monomial order in order to deal with the local case by Mora, Greuel and Pfister \cite{GP',GP,Mo}. One may ask whether there is a similar theory for partial orders on the monomials of a polynomial ring.

For a partial order, the leading ideal is no longer a monomial ideal
and, therefore, harder to study.  On the other hand, it is closer to
the given ideal in the sense that it is less degenerate than the
leading ideal for a monomial order. An instance is the initial ideal
generated by the homogeneous components of lowest degree of the
polynomials of the given ideal, which corresponds to the notion of the
tangent cone at the origin of an affine variety. 
Being closer to the original ideal, 
a partial order may help to solve a problem
that cannot be solved by any monomial order. 
A concrete example is Cavaglia's proof~\cite{Cav} of a conjecture of Sturmfels on the Koszul property of the pinched Veronese.  
The aim of this paper is to establish an effective theory of partial monomial orders and to show that it has potential applications in the study of polynomial ideals. \par  

Let $\Kx = k[x_1,...,x_n]$ be a polynomial ring over a field $k$. For any integral vector $a = (\a_1,...,\a_n) \in \NN^n$ we write $x^a$ for the monomial $x_1^{\a_1}\cdots x_n^{\a_n}$. Let $<$ be an arbitrary partial order on the monomials of $\Kx$. For every polynomial $f = \sum c_ax^a$ one defines the {\em leading part} of $f$ as 
$$L_<(f) := \sum_{x^a \in \max_<(f)} c_ax^a,$$
where $\max_<(f)$ denotes the set all monomials $x^a$ of $f$ such that there is no monomial $x^b$ of $f$ with $x^a <x^b$. \par

The first problem that we have to address is for which partial orders  the leading parts of polynomials behave well under the operations of $\Kx$. Obviously, such a partial order should be a weak order, i.e. it satisfies the additional condition that incomparability is an equivalence relation. Moreover,  it should be  compatible and cancellative with the product operation, i.e. if $x^a, x^b$ are monomials with $x^a < x^b$, then $x^ax^c < x^bx^c$ for any monomial $x^c$, and if $x^ax^c < x^bx^c$ for some $x^c$, then $x^a < x^b$. If a partial order $<$ satisfies these conditions, we call it a {\em monomial preorder}.  A natural instance  is the {\em weight order} associated to a weight vector $w \in \RR^n$, defined by $x^a < x^b$ if $w \cdot a < w\cdot b$.  \par

We shall see that a binary relation $<$ on the monomials of $\Kx$ is a monomial preorder if and only if there exists a real $m \times n$ matrix $M$ for some $m \ge 1$ such that $x^a < x^b$ if and only if $M \cdot a <_\lex M \cdot b$ for any monomials $x^a, x^b$, where $<_\lex$ denotes the lexicographic order. This means  that monomial preorders are precisely products of weight orders. This characterization is a natural extension of a result of Robbiano \cite{Ro}, who showed that every monomial order can be defined as above by a real matrix with additional properties. It can be also deduced from a subsequent result of Ewald and Ishida in \cite{EI}, where similar preorders on the lattice $\ZZ^n$ were studied from the viewpoint of algebraic geometry (see also Gonzalez Perez and Teissier \cite{GT}). They call the set of all such preorders  the Zariski-Riemann space of the lattice, and use this result to prove the quasi-compactness of that  space. \par

As one can see from the above characterization by real matrices, monomial preorders give rise to graded structures on $\Kx$. For graded structures, Robbiano \cite{Ro2} developed a framework for dealing with leading ideals. See also the papers of Mora~\cite{Mo3} and Mosteig and Sweedler~\cite{MS} and for related results.
Especially, non-negative gradings defined by matrices of integers were studied thoroughly by Kreuzer and Robbiano in \cite[Section 4.2]{KR}. They remarked in \cite[p. 15]{KR}: ``For actual computations, arbitrary gradings by matrices are too general". Nevertheless, we can develop an effective theory of leading ideals for monomial preorders despite various obstacles compared to the theory of monomial orders.
\par
 
Let $<$ be an arbitrary monomial preorder of $\Kx$. Following Greuel and Pfister \cite{GP}, we will work in the localization $\Kx_< := S_<^{-1}\Kx$, where $S_< := \{u \in \Kx \mid L_<(u) = 1\}$. Note that $\Kx_< = \Kx$ if and only if $1 < x_i$ or~$1$ and~$x_i$ are incomparable for all~$i$, and $\Kx_< = \Kx_{(X)}$ if and only if $x_i < 1$  for all~$i$. In these cases, we call $<$ a {\em global monomial preorder} or {\em local monomial preorder}, respectively. For every element $f \in \Kx_<$,  we can choose $u \in S_<$ such that $uf \in K[X]$, and define $L_<(f) := L_<(uf)$. The {\em leading ideal} of a set $G \subseteq \Kx_<$ is the ideal in $\Kx$ generated by the polynomials $L_<(f)$, $f \in G$, denoted by $L_<(G)$. \par

Let $I$ be an ideal in $\Kx_<$. For monomial orders, there is a division algorithm and a notion of s-polynomials, which are used to devise an algorithm for the computation of a standard basis of $I$, i.e. a finite set $G$ of elements of $I$ such that $L_<(G) = L_<(I)$. For monomial preorders, there is no such algorithm. However, we can overcome this obstacle by  refining the given monomial preorder $<$ to a monomial order. We shall see that $I$ and $L_<(I)$ share the same leading ideal with respect to such a refinement of the preorder $<$. 
Using this fact, we show that a standard basis of $I$ with respect to the refinement is also a standard basis of $I$ with respect to the original monomial preorder. Therefore, we can compute a standard basis with respect to a monomial preorder by using the standard basis algorithm for monomial orders.  Moreover, we can show that if $J \subseteq I$ are ideals in $\Kx_<$ with $L_<(J) = L_<(I)$, then $J=I$. \par

An important feature of the leading ideal with respect to a monomial order  is that it is a flat deformation of the given ideal \cite{GP}. This can be also shown for a monomial preorder. 
For that we need to approximate a monomial preorder by an integral weight order 
 which yields the same leading ideal. Compared to the case of a monomial order, the approximation for a monomial preorder is more complicated because of the existence of incomparable monomials, which must be given the same weight. \par

Using the approximation by an integral weight order we can relate properties of $I$ and $L_<(I)$ with each other.
The main obstacle here is that $L_<(I)$ and $I$ may have different dimensions. 
However, we always have  $\dim \Kx/L_<(I) = \dim \Kx/I^*$, where $I^* = I \cap \Kx$. 
From this it follows that $\height L_<(I) = \height I$ and $\dim \Kx/L_<(I) \ge \dim \Kx_</I$ with equality if $<$ is a global or local preorder.  Inspired by a conjecture of Kredel and Weispfening \cite{KW} on equidimensionality in Gr\"obner basis theory and its solution by Kalkbrenner and Sturmfels \cite{KS}, we also show that if $\Kx/I^*$ equidimensional, then $\Kx/L_<(I)$ is equidimensional. This has the interesting consequence that if an affine variety is equidimensional at the origin, then so is its tangent cone. \par

Despite the fact that $L_<(I)$ and $I$ may have different dimensions, many properties descend from $L_<(I)$ to $I$. Let $\PP$ be a property which an arbitrary local ring may have or not have.  We denote by $\Spec_\PP(A)$ the $\PP$-locus of  a noetherian ring $A$. If $\PP$ is one of the properties regular, complete intersection, Gorenstein, Cohen-Macaulay, Serre's condition $S_r$, normal, integral, and reduced, we can show that  
$$\dim \Spec_\NP(\Kx_</I) \le \dim \Spec_\NP\bigl(\Kx/L_<(I)\bigr),$$
where $\NP$ denotes the negation of $\PP$.  As far as we know, this inequality is new even for global monomial orders and for the tangent cone.
From this it follows that if $\PP$ holds at all primes of $\Kx/L_<(I)$, then it also holds at all primes of $\Kx_</I$. For a large class of monomial preorders, containing all monomial orders, it suffices to test $\PP$ for the maximal ideal in $\Kx/L_<(I)$ corresponding to the origin.
Moreover, we can show that if $\Kx/L_<(I)$ is an integral domain, then so is $\Kx_</I$. For a positive integral weight order,  Bruns and Conca \cite{BC} showed that the above properties descend from $\Kx/L_<(I)$ to $\Kx/I$. 
However, their method could not be used for monomial preorders. 
 \par

If $I$ is a homogeneous ideal of $\Kx$, we can replace a monomial preorder $<$ by a global monomial preorder, which can be approximated by a positive integral weight order. So we can use results on such weight orders \cite{Cav, Sb, Tr} to compare important graded invariants of $I$ and $L_<(I)$. We can show that the graded Betti numbers of $L_<(I)$ are upper bounds for the graded Betti numbers of $I$. From this it follows that the depth and the Castelnuovo-Mumford regularity of $I$  are bounded by those of $L_<(I)$:   
\begin{align*}
\depth  \Kx /I & \ge \depth \Kx/L_<(I),\\
\reg  \Kx /I & \le \reg \Kx/L_<(I).
\end{align*}
We can also show that the dimension of the graded components of the local cohomology modules of $L_<(I)$ are upper bounds for those of $I$  and that the reduction number of $\Kx/I$ is bounded above by the reduction number of  $\Kx/L_<(I)$. \par

The above results demonstrate that one can use the leading ideal with respect to a monomial preorder to study properties of the given ideal. For some cases, where the preorder is not a total order, the leading ideal still has a  structure like a monomial ideal in a polynomial ring. For instance, if $I$ is an ideal which contains the defining ideal $\Im$ of a toric ring $R$, one can construct a monomial preorder $<$ such that $L_<(I)$ contains $\Im$ and $L_<(I)/\Im$ is isomorphic to a monomial ideal of $R$. This construction was used by Gasharov, Horwitz and Peeva \cite{GHP} to show that if $R$ is a projective toric ring and if $Q$ is an arbitrary homogeneous ideal of $R$, there exists a monomial ideal $Q^*$ in $R$ such that $R/Q$ and $R/Q^*$ have the same Hilbert function. Their result is just a consequence of the general fact that $\Kx/L_<(I)$ and $\Kx/I$ have the same Hilbert function for any homogeneous ideal $I$ and for any monomial preorder $\le$. This case shows that monomial preorders can be used to study subvarieties of a toric variety.
\par

We would like to mention that in a recent paper \cite{KT}, the first two  authors have used global monomial preorders in a polynomial ring over a commutative ring $R$ to characterize the Krull dimension of $R$. 
Global monomial preorders have been also used recently by Sumi, Miyazaki, and Sakata \cite{SMS} to study ideals of minors. 
 \par

The paper is organized as follows. In Section 1 we characterize monomial preorders as products of weight orders, which are given by real matrices. In Section 2 we investigate basic properties of leading ideals.  In Section 3 we approximate a monomial preorder by an integral weight order. Then we use this result to study the dimension  of the leading ideal. In the final Section 4 we prove the descent of properties and invariants from the leading ideal to the given ideal for an arbitrary monomial preorder. 
\par

We refer to the books \cite{Ei} and \cite{GP} for unexplained notions in Commutative Algebra. \par

The authors would like to thank G.-M. Greuel, J. Herzog, J. Majadas, 
G. Pfister, L. Robbiano, T. R\"omer, F.-O. Schreyer, and B. Teissier for stimulating discussions on
the subjects of this paper. We also thank the anonymous referees for their comments.


\section{Monomial preorders}

Recall that a (strict) {\em partial order} on  a set $S$ is a binary relation $<$ on $S$ which is irreflexive, asymmetric, and transitive, i.e., for all $a, b, c \in S$,  
\begin{itemize}
\item not $a < a$;
\item if $a < b$ then not $b < a$;
\item if $a < b$ and $b < c$ then $a < c$. 
\end{itemize}
The elements $a,b$ are said to be {\em comparable} if $a < b$ or $b < a$. One calls $<$ a {\em weak order} if the incomparability is an equivalence relation on $S$. Notice that this is equivalent to saying that the negation~$\not<$ of~$<$ is transitive. A partial order under which every pair of elements is comparable is called a {\em total order}. \par

Let $\Kx = k[x_1,...,x_n]$ be a polynomial ring in~$n$ indeterminates
over a field $k$.  First, we want to see for which (strict) partial
order $<$ on the monomials of $\Kx$ one can define a meaningful
notion of leading polynomials. \par

It is natural that $<$ should be a weak order. Moreover, $<$ should be compatible and cancellative with the multiplication, meaning that $x^a < x^b$ implies $x^a x^c < x^bx^c$ and $x^a x^c < x^bx^c$ implies $x^a < x^b$ for $a,b,c \in \NN^n$. 
We call a weak order $<$ on the monomials of $\Kx$ a {\em monomial preorder} if it the above properties are satisfied. Note that this definition is weaker than the definition of a monomial preorder in \cite{KT}, where it is required that $1 < x^a$ for all $x^a \neq 1$. 
If a monomial preorder is a total order, we call it a {\em monomial
  order}. So a monomial order is precisely what Greuel and
Pfister~\cite[Definition~1.2.1]{GP} call a monomial
ordering. 

\begin{Remark} \label{cancellative}
{\rm For a total order, the cancellative property can be deduced from the compatibility with the multiplication.  
That is no more the case for a weak order. For example, define $x^a < x^b$ if $\deg x^a < \deg x^b$ or $\deg x^a = \deg x^b > 1$ and $x^a <_\lex x^b$.   This weak order is compatible with the product operation but not cancellative because $x_1x_2 < x_1^2$ but $x_2 \not< x_1$.}
\end{Remark}

Monomial preorders are abundant. Given an arbitrary real vector $w \in \RR^n$, we define 
$x^a <_w x^b$ if $w \cdot a < w \cdot b$, with the dot signifying the standard scalar product. 
Obviously, $<_w$ is a monomial preorder. 
One calls $<_w$ the {\em weight order} associated with $w$ \cite{Ei}. For example, the {\em degree order} or the {\em reverse degree order} defined by $x^a < x^b$ if $\deg x^a <  \deg x^b$ or $\deg x^a >  \deg x^b$ is the weight order of  the vector $(1,...,1)$ or $(-1,...,-1)$. More generally, we can associate with every real $m \times n$ matrix $M$ a monomial preorder $<$ by defining $x^a < x^b$ if $M \cdot a <_\lex M\cdot b$, where $<_\lex$ denotes the lexicographic order on $\RR^n$.\par

Given two monomial preorders $<$ and $<'$, we can define a new monomial preorder $<^*$ by $x^a <^* x^b$ if $x^a < x^b$ or if $x^a, x^b$ are incomparable with respect to $<$ and $x^a <' x^b$. We call $<^*$ the {\em product} of $<$ and $<'$. Note that this product is not commutative.
The monomial preorder associated with a real matrix $M$ is just the product of the weight orders associated with the row vectors of $M$. \par 

The following result shows that every monomial preorder of $\Kx$ arises in such a way.

\begin{Theorem} \label{Robbiano}
For every monomial preorder $<$ of $\Kx$, there is a real $m \times n$ matrix $M$ for some $m > 0$ such that  $x^a < x^b$ if and only if $M \cdot a <_\lex M \cdot b$. 
\end{Theorem}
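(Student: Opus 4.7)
The plan is to reduce the statement to the classical theorem of Robbiano \cite{Ro} by extending $<$ to a totally ordered quotient of $\ZZ^n$ and then pulling back a representing matrix.

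First I would pass from monomials to the full lattice $\ZZ^n$. Set
\[
P := \{a - b \in \ZZ^n : a, b \in \NN^n,\ x^b < x^a\},
\]
and let $Z \subseteq \ZZ^n$ consist of all differences $a - b$ with $a, b \in \NN^n$ such that $x^a$ and $x^b$ are either equal or incomparable under $<$. Using cancellativity I would check that $P$ and $Z$ do not depend on the chosen representatives (if $a-b = a'-b'$ then $a+b' = a'+b$, and the compatibility/cancellativity rules translate $x^b < x^a$ into $x^{b'} < x^{a'}$, and similarly for incomparability). The compatibility of $<$ with multiplication then gives that $P$ is closed under addition and $Z$ is a subgroup of $\ZZ^n$ with $P + Z \subseteq P$. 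Since incomparability is an equivalence relation and $<$ induces a total order on its classes, one obtains the disjoint decomposition $\ZZ^n = P \sqcup Z \sqcup (-P)$.

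Next I form the quotient $G := \ZZ^n / Z$ and check that it is torsion-free: any nonzero $\bar u \in G$ with $k \bar u = 0$ would lift to $u \in P$ after possibly replacing $u$ by $-u$, and then $k u \in P$ by additivity would contradict $k u \in Z$. Hence $G \cong \ZZ^r$ for some $r \le n$, and the image of $P$ turns $G$ into a totally ordered abelian group whose order is compatible with addition. Robbiano's theorem, in the form applicable to any total order on a finitely generated torsion-free abelian group compatible with addition (see \cite{Ro} and also \cite{EI}), now yields real row vectors $\bar w_1, \dots, \bar w_m \in \RR^r$ such that for $g \in \ZZ^r$ one has $g$ positive in $G$ if and only if $(\bar w_1 \cdot g, \dots, \bar w_m \cdot g) >_\lex 0$. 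Letting $\pi : \RR^n \to \RR^r$ be the real-linear extension of the quotient map $\ZZ^n \twoheadrightarrow G \cong \ZZ^r$, form the $m \times n$ matrix $M$ whose rows are $\bar w_i \circ \pi$. Then for $a, b \in \NN^n$,
\[
M \cdot a <_\lex M \cdot b \iff M(b - a) >_\lex 0 \iff \text{image of } b-a \text{ positive in } G \iff b - a \in P \iff x^a < x^b,
\]
which is the desired representation.

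The main obstacle is the invocation of Robbiano's theorem on $G$: Robbiano originally formulated his result for monomial orders on $\NN^n$, in which $1 \le x_i$ is built into the hypothesis, whereas the induced order on $G$ is only a total group order without any positivity on preferred generators. The underlying inductive construction, however---producing $\bar w_1$ by separating the convex hull of the positive cone in $\RR^r$ from that of its negative by a supporting hyperplane, and then recursing on the sublattice $\ker \bar w_1 \cap \ZZ^r$---adapts verbatim. A self-contained alternative runs the same induction directly on the preorder on $\ZZ^n$, with the additional bookkeeping that each $w_i$ produced must annihilate the part of $Z$ already detected, so that the matrix in the end vanishes on $Z$ exactly as required.
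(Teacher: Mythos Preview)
Your proposal is correct and follows essentially the same route as the paper: extend the preorder from $\NN^n$ to a larger abelian group, quotient by the subgroup of elements incomparable to~$0$ to obtain a genuine total order, invoke Robbiano's theorem on that quotient, and pull the resulting matrix back through the quotient map. The only cosmetic difference is that you stop at $\ZZ^n$ and pass to $\ZZ^n/Z \cong \ZZ^r$, whereas the paper (via its Lemmas~\ref{extension} and~\ref{kernel}) extends all the way to $\QQ^n$ and quotients by a $\QQ$-linear subspace~$E$; this lets the paper cite Robbiano's result \cite[Theorem~4]{Ro} in its original vector-space formulation and sidestep exactly the ``main obstacle'' you flag about applying Robbiano to a bare group order on $\ZZ^r$.
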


Theorem \ref{Robbiano} is actually about partial orders on $\NN^n$. 
For total orders on $\QQ^n$, it was first shown by Robbiano \cite[Theorem 4]{Ro} (see also \cite[Theorem 2.4]{Ro2}).
For partial orders on $\ZZ^n$, it was shown by Ewald and Ishida \cite[Theorem 2.4]{EI}  from the viewpoint of algebraic geometry. Actually, Ewald and Ishida reduced the proof to the case of total orders on $\QQ^n$. 
However, they were unaware of the much earlier result of Robbiano. We will deduce Theorem \ref{Robbiano} from Robbiano's result by using the following simple observations. These observations also explain why we have to define a monomial preorder as above. Moreover, they will be used later in the course of this paper. \par

Let $S$ be a cancellative abelian monoid with the operation $+$. We call a partial order $<$ on $S$ a {\em partial order of the monoid} $S$ if it is {\em compatible} and {\em cancellative} with $+$, meaning that $a < b$ implies $a + c < b + c$ and $a + c < b + c$ implies $a < b$ for all $a,b,c \in S$. \par

Similarly, if $E$ is a vector space over $\QQ$, a partial order $<$ on $E$ is called a {\em partial order of the vector space} $E$ if it is a partial order of $E$ as a monoid and $a < b$ implies $\lambda a < \lambda b$ for all $\lambda \in \QQ_+$ and $a,b \in E$, where $\QQ_+$ denotes the set of the positive rational numbers.  

\begin{Lemma}\label{extension} 
Every partial order of the additive monoid $\NN^n$ can be uniquely extended to a partial order of the vector space $\QQ^n$.
\end{Lemma}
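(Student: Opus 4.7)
The plan is to encode the partial order by its ``positive cone'' and then extend the cone in two stages, first from $\NN^n$ into $\ZZ^n$, and then into $\QQ^n$. For any abelian group $G$, a partial order compatible and cancellative with $+$ corresponds bijectively to a subset $P \subseteq G$ satisfying $0 \notin P$, $P \cap (-P) = \emptyset$, and $P + P \subseteq P$, via $a < b \iff b - a \in P$ (the three conditions encode irreflexivity, asymmetry, transitivity respectively, and compatibility plus cancellativity are automatic from translation-invariance of the defining condition). Since $\NN^n$ is only a monoid, I would first define $P \subseteq \ZZ^n$ by declaring $e \in P$ iff there exist $a, b \in \NN^n$ with $a < b$ and $e = b - a$, and then observe that equivalently, $a < a + e$ for \emph{every} $a \in \NN^n$ with $a + e \in \NN^n$. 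This ``some iff all'' equivalence is exactly where both compatibility (to add a common summand) and cancellativity (to remove one) are needed; granted it, the three cone axioms for $P$ follow routinely from the axioms of the given order on $\NN^n$.

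Given $P$, I would extend the order to $\ZZ^n$ by $a < b \iff b - a \in P$. The cone axioms translate to compatibility and cancellativity on $\ZZ^n$, and by construction the new order agrees with the original on $\NN^n$. Uniqueness on $\ZZ^n$ is forced: any compatible extension $<'$ has positive cone $P' = \{e \in \ZZ^n : 0 <' e\}$, and picking $a \in \NN^n$ with $a + e \in \NN^n$ gives $e \in P' \iff a <' a + e \iff a < a + e \iff e \in P$, so $P' = P$.

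To pass to $\QQ^n$, I would take the rational saturation $P^{\ast} := \{e \in \QQ^n : de \in P \text{ for some } d \in \NN\setminus\{0\}\}$ and set $a < b \iff b - a \in P^{\ast}$. Checking the three cone axioms for $P^{\ast}$ is routine: closure under addition uses $d_1 d_2 (e_1 + e_2) = d_1 d_2 e_1 + d_1 d_2 e_2 \in P + P \subseteq P$; the disjointness $P^{\ast} \cap (-P^{\ast}) = \emptyset$ reduces after clearing denominators to the corresponding property of $P$, once one knows (by iterating $P + P \subseteq P$) that $P$ is closed under multiplication by positive integers. Closure of $P^{\ast}$ under $\QQ_+$-scaling is built into the definition and yields the required implication $a < b \Rightarrow \lambda a < \lambda b$ for $\lambda \in \QQ_+$. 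For uniqueness, any extension restricts on $\ZZ^n$ to the unique extension of the previous paragraph, so its positive cone $P'$ satisfies $P' \cap \ZZ^n = P$; combined with closure of $P'$ under $\QQ_+$-scaling, a clearing-of-denominators argument in both directions gives $P' = P^{\ast}$.

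The only genuinely delicate step is the ``some iff all'' equivalence in the initial definition of $P$ on $\NN^n$: this is precisely where the cancellativity hypothesis is essential, and without it the cone $P$ would not even be a well-defined subset of $\ZZ^n$. Everything afterward — the two extensions and both uniqueness arguments — is bookkeeping with cone axioms and divisibility, so I expect the proof to be short once the positive-cone reformulation is set up.
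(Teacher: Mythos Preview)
Your proof is correct and follows the standard positive-cone approach: encode the order by its cone, push the cone from $\NN^n$ to $\ZZ^n$ via the ``some iff all'' lemma, and then rationally saturate. The paper's proof is more hands-on: it extends to $\ZZ^n$ by writing each $a \in \ZZ^n$ as $a_+ - a_-$ with $a_+, a_- \in \NN^n$ having disjoint supports and declaring $a < b$ iff $a_+ + b_- < a_- + b_+$ in $\NN^n$; the extension to $\QQ^n$ is then by clearing a common denominator, exactly as you do. The two routes are equivalent --- the paper's formula is your cone condition $b - a \in P$ evaluated at one canonical pair of representatives --- but your formulation makes the well-definedness step (your ``some iff all'' observation, which is where cancellativity enters) explicit, whereas the paper absorbs it into the unproved claim that the formula defines a partial order. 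You also argue uniqueness carefully at both stages; the paper asserts uniqueness in the statement but does not address it in the proof.
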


\begin{proof} 
Let $<$ be a partial order of $\NN^n$.  
For every $a \in \ZZ^n$, there are two unique vectors $a_+, a_- \in \NN^n$ having disjoint supports such that $a = a_+ - a_-$.
For arbitrary $a, b \in \ZZ^n$ we define $a < b$ if $a_+ + b_- < a_- + b_+$. 
One can easily shows that $<$ is a partial order of $\ZZ^n$ extending the partial order $<$ of $\NN^n$.
Now, for arbitrary $a, b \in \QQ^n$, we can  always find a positive integer $p$ such that $pa, pb \in \ZZ^n$. 
We define $a < b$ if $pa < pb$. 
It is easy to see that $<$ is a well-defined partial order of the vector space $\QQ^n$.
\end{proof}

It is clear from the above proof that the cancellative property of $<$ on $\NN^n$ is necessary for the extension of  $<$ to $\QQ^n$. In fact, any partial order on an abelian group which is compatible with the group operation is also cancellative.  \par

If $<$ is a weak order of $\NN^n$, one can easily verify that the extended partial order $<$ on $\QQ^n$ is also a weak order. 

\begin{Lemma}\label{kernel}
Let $<$ be a weak order of the vector space $\QQ^n$. Let $E$ denote the set of the elements which are incomparable to $0$. Then $E$ is a linear subspace of $\QQ^n$ and, if we define $a + E < b+E$ if $a < b$ for arbitrary $a, b \in \QQ^n$, then $<$ is a total order of the vector space $\QQ^n/E$. 
\end{Lemma}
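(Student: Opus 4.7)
The plan is to verify three things in sequence: that $E$ is a linear subspace of $\QQ^n$; that the proposed quotient relation is well-defined; and that it is then a total order of the vector space $\QQ^n/E$. The weak-order hypothesis (transitivity of incomparability) is what drives everything beyond the purely formal manipulations.

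The workhorse observation is translation invariance: for any $c \in \QQ^n$ and any $e \in E$, the elements $c$ and $c+e$ are incomparable, because $c+e<c$ would force $e<0$ and $c<c+e$ would force $0<e$, both ruled out by $e \in E$. With this in hand, closure of $E$ under addition becomes immediate: if $a,b \in E$ then $a+b$ is incomparable to $b$ by translation invariance, and $b$ is incomparable to $0$ by assumption, so transitivity of incomparability gives $a+b \in E$. Closure under scaling by $\lambda \in \QQ_+$ follows from the vector-space axiom $a<b \Leftrightarrow \lambda a<\lambda b$ applied in both directions (using $\lambda$ and $\lambda^{-1}$), and closure under negation follows from compatibility, since $-a<0$ would give $0<a$ after adding $a$ to both sides.

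For well-definedness of the quotient relation, suppose $a-a' \in E$ and $b-b' \in E$. Using compatibility to rewrite $a<b$ as $0<b-a$, the problem reduces to showing that for any $c \in \QQ^n$ comparable to $0$ and any $e \in E$, one has $0<c$ if and only if $0<c+e$. I expect this to be the main obstacle, since it mixes transitivity of $<$ with transitivity of incomparability. The argument is to assume $0<c$ and rule out the two alternatives for $c+e$: if $c+e<0$, transitivity of $<$ gives $c+e<c$, violating translation invariance; and if $c+e$ were incomparable to $0$, then $c+e$ and $c$ being incomparable (translation invariance again) together with transitivity of incomparability would force $c$ to be incomparable to $0$, contradiction. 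Hence $0<c+e$, and the reverse implication is symmetric.

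Totality and the vector-space axioms for the quotient order are then immediate: any $a \notin E$ is by definition of $E$ comparable to $0$, so any two distinct cosets are comparable; and compatibility, cancellation, and positive scaling descend termwise from the corresponding properties on $\QQ^n$.
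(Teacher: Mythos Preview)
Your proof is correct and follows essentially the same route as the paper. The paper's organizing observation is that $a$ and $b$ are incomparable if and only if $a-b\in E$; this is exactly your ``translation invariance'' statement, and once phrased this way it immediately identifies the cosets of $E$ with the incomparability classes, after which well-definedness and totality of the quotient order are declared ``easy to see.'' Your version unpacks the well-definedness step in more detail than the paper does, but the underlying argument is the same.
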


\begin{proof} 
It is clear that two elements $a, b \in \QQ^n$ are incomparable if and only if
$a - b \not < 0$ and $0 \not < a-b$, which means $a - b \in E$. 
Since the incomparability is an equivalence relation, $a, b \in E$ implies $a, b$ are incomparable and, therefore,
 $a-b \in E$. As a consequence, $a \in E$ implies $pa \in E$ for any $p \in \NN$.
From this it follows that $(p/q)a = pa/q \in E$ for any $q \in \ZZ$, $q \neq 0$. 
Therefore, $E$ is a linear subspace of $\QQ^n$ and $a + E$ is the set of the elements which are incomparable to $a$. 
Now, it is easy to see that the induced relation $<$ on $\QQ^n/E$ is a total order of the vector space $\QQ^n/E$.
\end{proof}

Lemma \ref{kernel} does not hold if $<$ is a partial order that is not a weak order. 

\begin{Example}
{\rm Consider the partial order of the vector space $\QQ^n$, $n \ge 2$, defined by the condition $a < b$ if and only if $a - b = \lambda (e_1-e_2)$ for some $\lambda \in \QQ_+$, where $e_i$ denote the standard basis vectors. Then $<$ is not a weak order because $e_1,0$ and $e_2, 0$ are pairs of incomparable elements, whereas $e_1 < e_2$. Clearly, $E$ is not a linear subspace of $\QQ^n$ because $e_1,e_2 \in E$ but $e_1 - e_2 \not\in E$.}
\end{Example}

Now we will use Lemma \ref{extension} and Lemma \ref{kernel} to prove Theorem \ref{Robbiano}. \medskip

\noindent{\em Proof of Theorem \ref{Robbiano}.}
Let $<$ denote the weak order of the additive monoid $\NN^n$ induced by the monomial preorder $<$ in $\Kx$.
By Lemma \ref{extension}, $<$ can be extended to a weak order of $\QQ^n$.
Let $E$ be the set of the incomparable elements to $0$ in $\QQ^n$. 
By Lemma \ref{kernel}, $E$ is a linear subspace of $\QQ^n$ and $<$ induces a total order $<$ of $\QQ^n/E$.
By \cite[Theorem 4]{Ro}, there is an  injective linear map $\phi$ from $\QQ^n/E$ to $\RR^m$ (as a vector space over $\QQ$) such that $a + E < b+E$ if and only if $\phi(a +E) <_\lex \phi(b+E)$ for all  $a, b \in \QQ^n$.
The composition of the natural map from $\QQ^n$ to $\QQ^n/E$ with $\phi$ is
a linear map $\psi$ from $\QQ^n$ to  $\RR^m$ such that
$a < b$ if and only if $\psi(a) <_\lex \psi(b)$.
Since $\psi$ is a linear map, we can find a real $m \times n$ matrix $M$ such that
$\psi(a) = M\cdot a$ for all $a \in \QQ^n$. Therefore, $x^a < x^b$ if and only if $M\cdot a <_\lex M\cdot b$.
\qed \medskip

We shall see in the following remark that a monomial preorder  give rises to a grading
 on $\Kx$, which may be useful for the study of leading ideals.  

\begin{Remark} \label{graded}%
{\rm Let $<$ be an arbitrary monomial preorder in $\Kx$. 
Let $S$ denote the quotient set of  the monomials with respect to the equivalence relation of
incomparability.  
Since $<$ is compatible and cancellative with  the product of monomials, 
we can define the product of two equivalent classes to make $S$ a totally ordered abelian monoid.  
For  every $a \in \NN^n$ we denote by $[a]$ the equivalent class of the monomials incomparable to $x^a$ and by $\Kx_{[a]}$  the vector space generated by the monomials of $[a]$. 
Then $\Kx = \oplus_{[a] \in S} \Kx_{[a]}$ has the structure of an $S$-graded ring. 
For instance, if $<$ is the weight order associated with a vector $w$, this grading is given by the weighted degree $\deg x^a =  w\cdot a$. We call  a polynomial  or a polynomial ideal {\em $<$-homogeneous} if it is graded with respect to this grading.
It is clear that the leading part of any polynomial is $<$-homogeneous. Therefore, the leading ideal of any set in $\Kx$  is $<$-homogeneous. As a consequence, the leading ideal has a primary decomposition with
$<$-homogeneous primary ideals and $<$-homogeneous associated primes.
See e.g. \cite[Exercise~3.5]{Ei} for more information on rings graded by an abelian monoid
and \cite{Ro2} for algebraic structures over rings graded by a totally ordered abelian group.}
\end{Remark}

We can use the leading ideal of monomial preorders to study different subjects in algebra and geometry.
For instance, if $<$ is the degree order, i.e. $x^a < x^b$ if $\deg x^a < \deg x^b$, 
then $L_<(f)$ is the homogeneous component of the highest degree of a polynomial $f$. 
In this case, the leading ideal $L_<(I)$ of a polynomial ideal $I$ describes the part at infinity of the affine variety $V(I)$
(see e.g. \cite[Definition 4.14]{GP}). If $<$ is the reverse degree order, i.e. $x^a < x^b$ if $\deg x^a > \deg x^b$, 
then $L_<(f)$ is just the homogeneous component of the lowest degree of $f$. In this case, $\Kx/L_<(I)$ is the associated graded ring of $\Kx/I$ with respect to the maximal homogeneous ideal, which corresponds to the concept of the tangent cone (see e.g. \cite[Section 5.4]{Ei}).  \par

In the following we will present a class of useful monomial preorders which arise naturally 
in the study of ideals of toric rings.
Recall that a {\em toric ring} is an algebra $R$ which are generated by a set of monomials 
$t^{c_1},...,t^{c_n}$, $c_1,...,c_n \in \NN^m$, in a polynomial ring $k[t_1,...,t_m]$.
We call an ideal of $R$ a {\em monomial ideal} if it is generated by monomials of $k[t_1,...,t_m]$. 
Monomial ideals of $R$ have a simple structure and can be studied using combinatorics tools. \par 

Let $\phi: \Kx \to R$ denote the map which sends $x_i$ to $t^{c_i}$, $i = 1,...,n$, and $\Im = \ker \phi$.
Then $R = \Kx/\Im$.  One calls $\Im$ the toric ideal of $R$. Every ideal of $R$ corresponds to an ideal of $\Kx$ containing $\Im$. Let $M$ be the matrix of the column vectors $c_1,...,c_n$.
We call  the monomial preorder on $\Kx$ associated to $M$ the {\em toric preorder} associated to $R$.
This order can be used to deform every ideal of $R$ to a monomial ideal.

\begin{Proposition} \label{toric}
Let $R$ be a toric ring and $\Im$ the toric ideal of $R$ in $\Kx$.  
Let $<$ be the toric preorder of $\Kx$ with respect to $R$.
Let $I$ be an arbitrary ideal of $\Kx$ which contains $\Im$.
Then $L_<(I) \supseteq \Im$ and $L_<(I)/\Im$ is isomorphic to a quotient ring of $R$ by a monomial ideal.  
\end{Proposition}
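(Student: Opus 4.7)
The plan is to exploit that the toric preorder was cooked up so that $x^a$ and $x^b$ are incomparable with respect to $<$ exactly when $M\cdot a = M\cdot b$, which is the same as $\phi(x^a)=\phi(x^b)$. Thus the incomparability classes $[a]$ of monomials of $\Kx$ are in natural bijection with the monomials of $R$ via $[a]\mapsto t^{M\cdot a}$. This identification is the only mechanism needed for the proof; every assertion of the proposition reduces to exploiting it together with Remark \ref{graded}.

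First I would show that $\Im$ itself is a $<$-homogeneous ideal. Write $f\in\Kx$ as $f=\sum_{[a]}f_{[a]}$, with $f_{[a]}$ the $k$-linear combination of the monomials of $f$ lying in the class $[a]$. Then $\phi(f)=\sum_{[a]}c_{[a]}\,t^{M\cdot a}$ for suitable scalars $c_{[a]}\in k$, and since the monomials $t^{M\cdot a}$ for distinct classes $[a]$ are $k$-linearly independent in $R$, the condition $\phi(f)=0$ forces $\phi(f_{[a]})=0$, i.e.\ $f_{[a]}\in\Im$, for every $[a]$. So every element of $\Im$ is a sum of $<$-homogeneous elements of $\Im$. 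A $<$-homogeneous polynomial coincides with its own leading part, so each $f_{[a]}$ equals $L_<(f_{[a]})$ and lies in $L_<(\Im)\subseteq L_<(I)$. This yields $\Im\subseteq L_<(I)$.

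Next I would identify the image of $L_<(I)$ in $R$. By construction $L_<(I)$ is generated by the leading parts $L_<(f)$ for $f\in I$, and each $L_<(f)$ is, by definition, a $k$-linear combination of monomials in a single equivalence class. All monomials in such a class have the common image $t^{M\cdot a}$ under $\phi$, so $\phi(L_<(f))=\lambda\,t^{M\cdot a}$ for some $\lambda\in k$, hence is either zero or a scalar multiple of a monomial of $R$. Consequently $\phi(L_<(I))=L_<(I)/\Im$ is a monomial ideal $J$ of $R$, and the induced isomorphism $\Kx/L_<(I)\cong R/J$ exhibits the quotient as $R$ modulo a monomial ideal, which is equivalent to the claim of the proposition.

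The argument is driven entirely by the combinatorial identification in the first paragraph, so there is no real obstacle. The only point deserving any care is the $k$-linear independence of the monomials $t^{M\cdot a}$ for distinct classes $[a]$; this is immediate because different classes give different exponents, and distinct monomials $t^c$ are linearly independent in $k[t_1,\dots,t_m]$, hence also in its $k$-subalgebra $R$.
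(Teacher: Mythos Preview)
Your proof is correct and follows essentially the same two-step strategy as the paper: first establish $\Im\subseteq L_<(I)$ by showing that $\Im$ is $<$-homogeneous, then show that $\phi(L_<(f))$ is always a scalar multiple of a single monomial of $R$. The only noteworthy difference is in the first step. The paper invokes the classical result (cited to Herzog) that $\Im$ is generated by binomials $x^{a_+}-x^{a_-}$ with $M\cdot a_+=M\cdot a_-$, and observes that each such generator is $<$-homogeneous. You instead argue directly from the grading: decomposing an arbitrary $f\in\Im$ into its $<$-homogeneous pieces $f_{[a]}$ and using the linear independence of the distinct monomials $t^{M\cdot a}$ in $R$ to force each $f_{[a]}\in\Im$. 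Your route is more self-contained, since it does not rely on the binomial description of toric ideals; the paper's route is shorter once that description is taken as known. Both are valid, and the second half of the argument is identical in substance.
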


\begin{proof}
It is known that $\Im$ is generated by binomials of the form $x^{a_+} - x^{a_-}$, where $a_+, a_- \in \NN^n$ are two vectors having disjoint supports such that $a = a_+ - a_-$ is a solution of the equation $M \cdot a = 0$ \cite{He}. Since $M \cdot x^{a_+} = M \cdot x^{a_-}$, $x^{a_+}$ and $x^{a_-}$ are incomparable with respect to $<$. Hence, $L_<(x^{a_+} - x^{a_-}) = x^{a_+} - x^{a_-}$. Thus, $L_<(\Im) = \Im$. Since $I \supseteq \Im$, this implies $L_<(I) \supseteq \Im$. \par   

Since $L_<(I)/\Im \cong \phi(L_<(I))$, it remains to show that $\phi(L_<(I))$ is a monomial ideal of $R$. This follows from the general fact that for any polynomial $f \in \Kx$, $\phi(L_<(f))$ is a monomial of $k[t_1,...,t_r]$,  which we shall show below.   \par

If $f$ is a monomial, then $L_<(f) = f$ and $\phi(f)$ is clearly a monomial of $k[t_1,...,t_r]$.
If $f$ is not a monomial, $L_<(f)$ is a linear combination of incomparable monomials. 
Therefore, it suffices to show that if $x^a, x^b$ are two incomparable monomials, then $\phi(x^a) = \phi(x^b)$.
Let $M$ be the matrix defined as above.  Since $<$ is the monomial preorder associated to $M$, 
$M \cdot a = M \cdot b$. Hence,
$\phi(x^a) = t^{M \cdot a} = t^{M \cdot b'} = \phi(x^{b}).$ 
\end{proof}

Proposition \ref{toric} extends a technique used by Gasharov, Horwitz and Peeva to show that if $R$ is a projective toric ring and if $Q$ is a homogeneous ideal in $R$, then there exists a monomial ideal $Q^*$ such that $R/Q$ and $R/Q^*$ have the same Hilbert function \cite[Theorem 2.5(i)]{GHP}.  
 In this case, we have $R/Q \cong k[X]/I$ and $R/Q^* \cong k[X]/L_<(I)$ for some homogeneous ideal $I$.  In the next section we will prove the more general result that if $I$ is an arbitrary homogeneous ideal, then $\Kx/I$ and $\Kx/L_<(I)$ have the same Hilbert function for any homogeneous ideal $I$ of $\Kx$ and any monomial preorder $<$.  


\section{Computation of leading ideals}

Let~$<$ be an arbitrary monomial preorder on $\Kx$. 
Since $<$ is compatible with the product operation, we have $L_<(f g) = L_<(f)L_<(g)$ for $f,g \in \Kx$. 
It follows that the set $S_< := \{u \in \Kx \mid L_<(u) = 1\}$ is closed under multiplication, so we can form the localization $\Kx_< := S_<^{-1} \Kx.$ \par

It is easy to see that $S_< = \{1\}$ if and only if $1 < x_i$ or~$1$
and~$x_i$ are incomparable for all~$i$ and that $S_< = \Kx \setminus (X)$ if
and only if $x_i < 1$  for all~$i$. That means $\Kx_< = \Kx$ or $\Kx_< = \Kx_{(X)}$, 
explaining why we call $<$ in these cases a {\em global monomial preorder} or {\em local monomial preorder}.  For monomial orders, these notions coincide with those introduced by Greuel and Pfister  \cite{GP}.    
\par  

For every element $f \in \Kx_<$, there exists $u \in S_<$ such that $uf \in K[X]$.
If there is another $v \in S_<$ such that $vf \in K[X]$, then $L(vf) = L(uvf) = L(uf)$ because $L(u) = L(v) = 1$.
Therefore, we can define $L_<(f) := L_<(uf)$.    
Recall that for a subset $G \subseteq \Kx_<$, the {\em leading ideal} $L_<(G)$ of $G$ is generated by the elements $L_<(f)$, $f \in G$,  in $\Kx$. \par

The above notion of leading ideal allow us to work in both rings $k[X]$ and $\Kx_<$.
Actually, we can move from one ring to the other ring by the following relationship. 
  
\begin{Lemma} \label{leading} 
Let $Q$ be an ideal in $\Kx$ and $I$ an ideal in $\Kx_<$. Then\par
{\rm (a)} $L_<(Q\Kx_<) = L_<(Q),$ \par
{\rm (b)} $L_<(I \cap \Kx) = L_<(I)$.
\end{Lemma}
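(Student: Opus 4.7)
My plan is to prove each part by establishing both inclusions, with the easy inclusions coming from set inclusions and the nontrivial inclusions following from the definition of $L_<$ on $\Kx_<$ (namely, $L_<(f) := L_<(uf)$ for any $u \in S_<$ with $uf \in \Kx$, which is well-defined because $L_<(u) = 1$ implies $L_<(uf) = L_<(u)L_<(f) = L_<(f)$).

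For (a), the inclusion $L_<(Q) \subseteq L_<(Q\Kx_<)$ is immediate from $Q \subseteq Q\Kx_<$, since $L_<(Q)$ and $L_<(Q\Kx_<)$ are generated by the leading parts of elements of $Q$ and $Q\Kx_<$ respectively. For the reverse, I would take a generator $L_<(f)$ of $L_<(Q\Kx_<)$ with $f \in Q\Kx_<$, write $f = g/u$ with $g \in Q$ and $u \in S_<$ (possible because $Q\Kx_<$ is the image of $Q$ under localization at $S_<$), and then compute $L_<(f) = L_<(uf) = L_<(g) \in L_<(Q)$.

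For (b), the inclusion $L_<(I \cap \Kx) \subseteq L_<(I)$ follows at once from $I \cap \Kx \subseteq I$. For the reverse, take any $f \in I$ and choose $u \in S_<$ with $uf \in \Kx$. Since $I$ is an ideal of $\Kx_<$ and $u \in \Kx_<$, we have $uf \in I$, so $uf \in I \cap \Kx$. By definition of $L_<(f)$ on $\Kx_<$, we have $L_<(f) = L_<(uf) \in L_<(I \cap \Kx)$. Since the generators of $L_<(I)$ lie in $L_<(I \cap \Kx)$, this gives $L_<(I) \subseteq L_<(I \cap \Kx)$.

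There is no real obstacle here; the only subtle point worth verifying carefully is that $L_<(f)$ is independent of the choice of denominator $u \in S_<$, which is exactly the well-definedness already recorded in the paragraph preceding the lemma. Once that is in hand, both parts are direct unwindings of definitions.
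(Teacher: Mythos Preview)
Your proof is correct and follows essentially the same approach as the paper's: both establish part~(a) by clearing denominators to reduce to an element of $Q$, and both use the easy inclusion from $Q \subseteq Q\Kx_<$. The only cosmetic difference is that the paper deduces~(b) from~(a) by setting $Q = I \cap \Kx$ and using $Q\Kx_< = I$, whereas you prove~(b) directly; the underlying computation is the same.
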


\begin{proof}
For every $f \in Q\Kx_<$, there exists $u \in S_<$ such that $uf \in Q$. Therefore, $L_<(f) = L_<(uf) \in L_<(Q)$.
This means $L_<(QKx_<) \subseteq L_<(Q)$. Since $Q \subseteq Q\Kx_<$, this implies $L_<(Q \Kx_<) = L_<(Q)$. Now let $Q = I \cap \Kx$. Then $Q\Kx_< = I$. As we have seen above, $L_<(Q) = L_<(I)$.
\end{proof}

By Lemma \ref{leading}(a), two different ideals in $\Kx$ have the same leading ideal if they have the same extensions in $\Kx_<$. This explains why we have to work with ideals in $\Kx_<$. \par

For a monomial order, there is the division algorithm, which gives a
remainder $h$ (or a weak normal form in the language of \cite{GP}) of
the division of an element $f \in \Kx_<$ by the elements of $G$ such that if $h \neq 0$, $L_<(h) \not\in L_<(G)$. 
This algorithm is at the heart of the computations with ideals by monomial
orders \cite{GP}. In general, we do not have a division algorithm for monomial
preorders.  For instance, if $<$ is the monomial preorder without
comparable monomials, then $L_<(f) = f$ for all $f \in \Kx$. In this
case, there are no ways to construct such an algorithm. However, we
can overcome this obstacle by refining the monomial preorder $<$. \par

We say that a monomial preorder $<^*$ in $\Kx$ is a {\em refinement}
of $<$ if $x^a < x^b$ implies $x^a <^* x^b$.  Notice that this implies
$S_< \subseteq S_{<^*}$, so $\Kx_< \subseteq \Kx_{<^*}$. 
The product of $<$ with an other monomial preorder $<'$ is a
refinement of $<$. Conversely, every refinement $<^*$ of $<$ is
the product of $<$ with $<^*$.  

\begin{Lemma} \label{finer}%
  Let $<^*$ be the product of $<$ with a monomial preorder $<'$. Then
  \begin{enumerate}
    \renewcommand{\theenumi}{\alph{enumi}}%
  \item \label{lRefineA} $L_{<^*}(G) \subseteq
    L_{<'}\bigl(L_<(G)\bigr)$ for every subset $G \subseteq \Kx_<$,
  \item \label{lRefineB} $L_{<^*}(I) = L_{<'}\bigl(L_<(I)\bigr)$ for
    every ideal $I \subseteq \Kx_<$,  
  \item \label{lRefineC} if~$<'$ is global, then $\Kx_{<^*} = \Kx_<$.  
  \end{enumerate}
\end{Lemma}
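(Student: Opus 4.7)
My plan is to prove (a), (b), and (c) in that order. The cornerstone is the observation that for any polynomial $f \in \Kx$ one has $L_{<^*}(f) = L_{<'}(L_<(f))$: being $<^*$-maximal in~$f$ means being $<$-maximal first (selecting the monomials of $L_<(f)$) and then being $<'$-maximal among those. For a general $f \in \Kx_<$, the same identity holds because $S_< \subseteq S_{<^*}$ lets one clear the denominator by some $u \in S_<$ and reduce to the polynomial case. Part (a) then falls out, since each generator $L_{<^*}(f)$ of $L_{<^*}(G)$ equals $L_{<'}(L_<(f))$ and $L_<(f)$ is a generator of $L_<(G)$.

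For (b) the direction $\subseteq$ is (a) applied to $G=I$; the content is in the reverse. I would try to show $L_{<'}(g) \in L_{<^*}(I)$ for every $g \in L_<(I)$. By Remark~\ref{graded}, $L_<(I)$ is $<$-homogeneous, so $g = \sum_\alpha g_\alpha$ with the $<$-homogeneous components $g_\alpha$ also in $L_<(I)$. Because the monomials of distinct $g_\alpha$'s lie in distinct $<$-equivalence classes, the $<'$-maximal monomials of $g$ are precisely the $<'$-maximal monomials of those $g_\alpha$ attaining the global $<'$-maximum. Thus $L_{<'}(g)$ is a sum of certain $L_{<'}(g_\alpha)$'s, and the problem reduces to the case that $g$ itself is $<$-homogeneous.

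For a $<$-homogeneous $g \in L_<(I)$, my plan is to produce $F \in I$ with $L_<(F) = g$, so that $L_{<^*}(F) = L_{<'}(L_<(F)) = L_{<'}(g)$. After replacing $I$ by $I \cap \Kx$ using Lemma~\ref{leading}, write $g = \sum c_i L_<(f_i)$ with $f_i \in I$ and $c_i \in \Kx$, split each $c_i$ into its $<$-homogeneous parts, and keep only those summands whose $<$-degree matches $[g]$; the rest must cancel because $g$ is $<$-homogeneous. For each surviving term, $c_i = L_<(c_i)$, hence $c_i L_<(f_i) = L_<(c_i) L_<(f_i) = L_<(c_i f_i)$ by compatibility of $L_<$ with products. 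Setting $F := \sum c_i f_i \in I$, every monomial of each $c_i f_i$ has $<$-degree at most $[g]$ with equality precisely on $L_<(c_i f_i)$, and summing gives $L_<(F) = \sum L_<(c_i f_i) = g \ne 0$, as required.

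For (c) the inclusion $S_< \subseteq S_{<^*}$ is immediate from $<^*$ refining $<$. Conversely, let $u \in S_{<^*}$, so $L_{<^*}(u) = 1$. The condition that $1$ is $<^*$-maximal in $u$ already forces $1$ to be $<$-maximal; if some monomial $x^b \ne 1$ also appears in $L_<(u)$, then $x^b$ is $<$-incomparable to~$1$, the global hypothesis on $<'$ gives $1 <' x^b$ or $1 \sim' x^b$, and $1$ being $<^*$-maximal excludes $1 <' x^b$, so $x^b \sim' 1$ and hence $x^b \sim^* 1$. Then $x^b \in \max_{<^*}(u)$ with nonzero coefficient, contradicting $L_{<^*}(u) = 1$. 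Therefore $L_<(u) = 1$, i.e.\ $u \in S_<$. The main obstacle throughout is the $\supseteq$ in (b): turning $<'$-leading parts of arbitrary elements of $L_<(I)$ into $<^*$-leading parts of actual elements of $I$ requires both the $<$-grading on $L_<(I)$ and the homogeneity trick that promotes $c_i L_<(f_i)$ to $L_<(c_i f_i)$.
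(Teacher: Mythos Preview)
Your proof is correct and follows essentially the same route as the paper's. In part~(b) you decompose the coefficients $c_i$ into $<$-homogeneous parts, whereas the paper takes the $h_i$ to be monomials from the outset; and your argument for~(c) spells out in more detail the step that $L_{<'}(L_<(u)) = 1$ forces $L_<(u) = 1$ under the global hypothesis, which the paper compresses into one line. Otherwise the key ideas---the identity $L_{<^*}(f) = L_{<'}(L_<(f))$, the reduction to $<$-homogeneous $g$ via Remark~\ref{graded}, and the construction of $F \in I$ with $L_<(F) = g$---are the same.
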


\begin{proof}
  To show part~\eqref{lRefineA}, let $f \in G$ and choose $u \in S_<$ with $u f \in \Kx$.  Then
  \[
  L_{<^*}(f) = L_{<^*}(u f) = L_{<'}\bigl(L_<(u f)\bigr) =
  L_{<'}\bigl(L_<(f)\bigr) \in  L_{<'}\bigl(L_<(G)\bigr).
  \]
 
To show part~\eqref{lRefineB}, we only need to show that $L_{<'}\bigl(L_<(I)\bigr) \subseteq L_{<^*}(I)$.
Let $g \in L_<(I)$.  Then $g = \sum_{i=1}^m h_iL_<(f_i)$ with $h_i \in \Kx$ and $f_i \in I$. 
We may assume that the~$h_i$ are monomials, so $h_i  L_<(f_i) = L_<(h_i f_i)$ for all $i$.
 Replacing the $f_i$ by suitable $u_i f_i$ with $u_i \in S_<$, we may assume $f_i \in I \cap \Kx$. 

Let us first consider the case $g$ is $<$-homogeneous. 
Then we may further assume that  the monomials of all $L_<(h_i f_i)$ are equivalent to the monomials of $g$. Therefore, if we set $f = \sum_{i=1}^m h_if_i$, then $g = L_<(f)$. 
Since $f \in I$, we get 
$$L_{<'}(g) = L_{<'}(L_<(f)) = L_{<^*}(f) \in L_{<^*}(I).$$

Now we drop the assumption that $g$ is $<$-homogeneous. 
Since $L_<(I)$ is $<$-homogeneous, all $<$-homogeneous components of $g$ belong to $L_<(I)$.
As we have seen above, their leading parts with respect to $<'$ belong to $L_{<^*}(I)$.  
Let $g_1,...,g_r$ be those   $<$-homogeneous components of $g$ that contribute terms to
  $L_{<'}(g)$. Since each term of $L_{<'}(g)$ occurs in precisely one $<$-homogeneous component  of $f$, 
 $$L_{<'}(g) = \sum_{j=1}^r L_{<'}(g_j) \in L_{<^*}(I).$$
 Therefore, we can conclude that $L_{<'}(L_<(I)) \subseteq L_{<^*}(I)$. \par
  
To prove part~\eqref{lRefineC} we show that $S_{<^*} = S_<$.
Since $S_< \subseteq  S_{<^*}$, we only need to show that $S_{<^*} \subseteq  S_<$. 
Let $f \in S_{<^*}$. Then $L_{<'}(L_<(f)) = L_{<^*}(f) = 1$.  
Since $<'$ is a global monomial preorder, $1 <' x^a$ or~$1$ and $x^a$ are incomparable for all $x^a \neq 1$.
Therefore, we must have $L_<(f) = 1$, which means $f \in S_<$.  
\end{proof}

The following example shows that the inclusion in Lemma
\ref{finer}\eqref{lRefineA} may be strict.

\begin{Example} {\rm Let $<$ be the monomial preorder without any
    comparable monomials. Then $L_<(f) = f$ for every polynomial $f$.
    Let $<^*$ be the degree reverse lexicographic order. Then $<^*$ is
    the product of $<$ with~$<^*$.  For $G = \{x_1,x_1 + x_2\}$, we
    have \[ L_{<^*}(L_<(G)) = L_{<^*}((x_1,x_1 + x_2)) = (x_1,x_2)
    \supsetneqq (x_1) = L_{<^*}(G).
    \]}
\end{Example}

By Lemma \ref{finer}\eqref{lRefineB}, $I$ and $L_<(I)$ share the same leading ideal with respect to $<^*$. 
If we choose $<'$ to be a monomial order, then $<^*$ is also a monomial order. 
Therefore,  we can use results on the relationship between ideals and their leading ideals in the case of monomial orders to study this relationship in the case of monomial preorders. \par

First, we have the following criterion for the equality of ideals by means of their leading ideals.  

\begin{Theorem} \label{equality}%
  Let $J \subseteq I$ be ideals of $\Kx_<$ such that $L_<(J) = L_<(I)$,
  then $J = I$.
\end{Theorem}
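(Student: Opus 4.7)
The plan is to reduce to the already-known case of a monomial order by refining $<$ via Lemma~\ref{finer}. I would choose any global monomial order $<'$ on $\Kx$, for instance the lexicographic order, and let $<^*$ denote the product of $<$ with $<'$. Since $<'$ is total, any two monomials that are incomparable under $<$ are comparable under $<'$ and hence under $<^*$, so $<^*$ is itself a total order, i.e., a monomial order. Furthermore, because $<'$ is global, Lemma~\ref{finer}\eqref{lRefineC} gives $\Kx_{<^*} = \Kx_<$, so $J \subseteq I$ are still ideals of one and the same ring $\Kx_{<^*}$.

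Next, applying Lemma~\ref{finer}\eqref{lRefineB} to both ideals would yield
\[
L_{<^*}(J) = L_{<'}\bigl(L_<(J)\bigr) = L_{<'}\bigl(L_<(I)\bigr) = L_{<^*}(I).
\]
Thus $J \subseteq I$ share the same leading ideal with respect to the honest monomial order $<^*$, and the desired equality $J = I$ then follows from classical standard basis theory in $\Kx_{<^*}$: a standard basis $G$ of $J$ with respect to $<^*$ satisfies $L_{<^*}(G) = L_{<^*}(J) = L_{<^*}(I)$, so $G$ is simultaneously a standard basis of $I$; since a standard basis generates its ideal, this forces $I = (G) \subseteq J$ and hence $J = I$.

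The main point, rather than an obstacle, is the reduction step itself, which hinges on having both parts~\eqref{lRefineB} and~\eqref{lRefineC} of Lemma~\ref{finer} available: one must know that refining $<$ by a global monomial order preserves both the leading-ideal formalism (so that leading ideals with respect to $<^*$ can be computed by applying $<'$ to leading ideals with respect to $<$) and the underlying localization $\Kx_<$. Once this is in place, no new Gr\"obner-theoretic machinery is required beyond what is already developed for monomial orders in Greuel--Pfister~\cite{GP}.
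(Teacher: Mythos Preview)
Your proof is correct and follows essentially the same route as the paper: refine $<$ by a global monomial order $<'$, use Lemma~\ref{finer}\eqref{lRefineB} and~\eqref{lRefineC} to transfer the equality of leading ideals to the monomial order $<^*$ on the same ring $\Kx_<$, and then invoke the known result for monomial orders. The paper simply cites \cite[Lemma~1.6.7(2)]{GP} for the last step, whereas you spell out the standard-basis argument behind it; this is fine so long as you are appealing to the Greuel--Pfister result for monomial orders and not to Corollary~\ref{generating} of the present paper, which is derived from Theorem~\ref{equality} itself.
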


\begin{proof}
  Let $<^*$ be the product of $<$ with a global monomial order $<'$.
  Using Lemma \ref{finer}\eqref{lRefineB}, we have
  \[
  L_{<^*}(J) = L_{<'}\bigl(L_<(J)\bigr) = L_{<'}\bigl(L_<(I)\bigr) =
  L_{<^*}(I).
  \]
 Moreover, $\Kx_< = \Kx_{<^*}$ by Lemma~\ref{finer}\eqref{lRefineC}.
 Since $<^*$ is a monomial order, these facts implies $J = I$  \cite[Lemma~1.6.7(2)]{GP}.
\end{proof}

Let $I$ be an ideal of $\Kx_<$. We call a finite set $G$ of elements
of $I$ a {\em standard basis} of $I$ with respect to $<$ if $L_<(G) = L_<(I)$. 
This means that $L_<(I)$ is generated by the elements $L_<(f)$, $f \in G$.
For monomial orders, our definition coincides with \cite[Definition~1.6.1]{GP}.
If $<$ is a global monomial order, then $\Kx_< = \Kx$ and a standard basis is just a Gr\"obner basis. 

\begin{Corollary} \label{generating}
  Let $G$ be a standard basis of $I$. Then $G$ is a generating set of $I$.
\end{Corollary}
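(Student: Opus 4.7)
The plan is to reduce the corollary to Theorem~\ref{equality} in one step. Let $J$ be the ideal of $\Kx_<$ generated by~$G$. Since $G \subseteq I$, we clearly have $J \subseteq I$, so by Theorem~\ref{equality} it suffices to show $L_<(J) = L_<(I)$.

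For this, the key observation is monotonicity of the leading ideal under inclusion of subsets: if $G_1 \subseteq G_2 \subseteq \Kx_<$, then $L_<(G_1) \subseteq L_<(G_2)$, because the generating set on the right contains the generating set on the left. Applying this to the chain $G \subseteq J \subseteq I$ yields
\[
L_<(G) \subseteq L_<(J) \subseteq L_<(I).
\]
By the hypothesis that~$G$ is a standard basis, the two outer ideals coincide, forcing $L_<(J) = L_<(I)$. Then Theorem~\ref{equality} gives $J = I$, which is exactly the statement that~$G$ generates~$I$ as an ideal of~$\Kx_<$.

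There is no real obstacle here: the content of the corollary is that the definition of a standard basis (controlling the leading ideal) already suffices to control the ideal itself, and this follows formally once Theorem~\ref{equality} has been established.
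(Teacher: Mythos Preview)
Your proof is correct and is essentially identical to the paper's own argument: define $J = (G)$, use the chain $L_<(G) \subseteq L_<(J) \subseteq L_<(I)$ together with $L_<(G) = L_<(I)$ to force equality, and then apply Theorem~\ref{equality}.
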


\begin{proof}
  Let $J := (G)$. Then $J \subseteq I$ and $L_<(I) = L_<(G)
  \subseteq L_<(J) \subseteq L_<(I)$. So $L_<(J) = L_<(I)$.
Hence $J = I$ by Theorem  \ref{equality}.
\end{proof}

The above results do not hold for ideals in $\Kx$.
This can be seen from the following observation.  
For every ideal $Q$ of $k[X]$ we define  
$$Q^* := Qk[X]_< \cap k[X].$$ 
Then $Q \subseteq Q^*$. By Lemma \ref{leading},  $L_<(Q) = L_<(Q^*)$.
Therefore, a standard basis of $Q$ is also a standard basis of $Q^*$.
One can easily construct ideals $Q$ such that $Q^* \neq Q$. 
For instance, if $Q = (uf)$ with $1 \neq u \in S_<$ and $0 \neq f \in k[X]$, then $f \in Q^* \setminus Q$.
\par

To compute the leading ideal $L_<(I)$ we only need to compute a standard basis $G$ of $I$  
and then extract the elements $L_<(f)$, $f \in G$, which generate $L_<(I)$. 
The following result shows that the computation of the leading ideal  can be passed to the case of a monomial order. Note that the product of a monomial preorder with a monomial order is always a monomial order.

\begin{Theorem} \label{standard}%
  Let $<^*$ be the product of $<$ with a global monomial order. 
Let $I$ be an ideal in $\Kx_<$ (which by   Lemma~\ref{finer}\eqref{lRefineC} equals $\Kx_{<^*}$). 
Then every standard basis $G$ of $I$ with respect to $<^*$ is also a standard basis of $I$ with respect to $<$.
\end{Theorem}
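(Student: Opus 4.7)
The plan is to show both inclusions $L_<(G) \subseteq L_<(I)$ and $L_<(I) \subseteq L_<(G)$, where the first is trivial since $G \subseteq I$, and to obtain the second by reducing to an application of Theorem \ref{equality} with respect to the global monomial order $<'$ used to form $<^*$.

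First I would fix notation: write $<^*$ as the product of $<$ with a global monomial order $<'$, and set $J := L_<(G)$ and $K := L_<(I)$. These are ideals of $\Kx$, hence of $\Kx_{<'} = \Kx$ since $<'$ is global. Since $G \subseteq I$ we immediately get $J \subseteq K$, so it suffices to show $J = K$.

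Next I would apply Lemma \ref{finer} to translate the standard-basis hypothesis $L_{<^*}(G) = L_{<^*}(I)$ into an equality between $L_{<'}(J)$ and $L_{<'}(K)$. Concretely, part~\eqref{lRefineA} of that lemma yields $L_{<^*}(G) \subseteq L_{<'}\bigl(L_<(G)\bigr) = L_{<'}(J)$, while part~\eqref{lRefineB} gives $L_{<^*}(I) = L_{<'}\bigl(L_<(I)\bigr) = L_{<'}(K)$. Chaining these with the assumption $L_{<^*}(G) = L_{<^*}(I)$ produces
\[
L_{<'}(K) = L_{<^*}(I) = L_{<^*}(G) \subseteq L_{<'}(J).
\]
The reverse inclusion $L_{<'}(J) \subseteq L_{<'}(K)$ is automatic from $J \subseteq K$, so $L_{<'}(J) = L_{<'}(K)$.

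Finally, I would invoke Theorem \ref{equality} with the monomial preorder $<'$ playing the role of~$<$ there: since $J \subseteq K$ are ideals of $\Kx_{<'}$ with $L_{<'}(J) = L_{<'}(K)$, we conclude $J = K$, i.e., $L_<(G) = L_<(I)$, as required. The one conceptual step to watch is that Theorem \ref{equality} is being used with a different preorder than~$<$ (namely with~$<'$), applied to the leading ideals themselves rather than to the ideals $G$ and $I$; no serious obstacle arises beyond correctly tracking which preorder is in use at each stage.
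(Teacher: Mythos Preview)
Your proof is correct and follows essentially the same route as the paper's: both use Lemma~\ref{finer}(a) and~(b) to obtain the chain $L_{<'}\bigl(L_<(I)\bigr) = L_{<^*}(I) = L_{<^*}(G) \subseteq L_{<'}\bigl(L_<(G)\bigr) \subseteq L_{<'}\bigl(L_<(I)\bigr)$, and then apply Theorem~\ref{equality} with respect to~$<'$ to conclude $L_<(G) = L_<(I)$. Your explicit remark that $\Kx_{<'} = \Kx$ because~$<'$ is global makes the applicability of Theorem~\ref{equality} transparent.
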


\begin{proof}
  Let $<^*$ be the product of $<$ with a global monomial order
  $<'$. Let $G$ be a standard basis of $I$ with respect to $<^*$.
By Lemma~\ref{finer}\eqref{lRefineA} and \eqref{lRefineB}, we have
\[
  L_{<'}\bigl(L_<(I)\bigr) = L_{<^*}(I) = L_{<^*}(G) \subseteq
  L_{<'}\bigl(L_<(G)\bigr) \subseteq L_{<'}\bigl(L_<(I)\bigr).
  \]
This implies $L_{<'}(L_<(G)) = L_{<'}(L_<(I))$.  
Therefore, applying Theorem \ref{equality} to $<'$, we obtain $L_<(G) = L_<(I)$.
\end{proof}

If~$<$ is a monomial order, there is an effective algorithm that computes a
standard basis of a given ideal $I \subseteq \Kx_<$ with respect
to~$<$ (see \cite[Algorithm~1.7.8]{GP}).  Since monomial orders are monomial preorders, 
we cannot get a more effective algorithm. For this reason we will not address computational issues like membership test and complexity for monomial preorders. \par

For global monomial preorders defined by matrices of integers, Corollary \ref{generating} and Theorem \ref{standard} were already proved by Kreuzer and Robbiano \cite[Propositions 4.2.14 and 4.2.15]{KR}. Note that they use the term Macaulay basis instead of standard basis. \par

For an ideal $I \subseteq \Kx$, we also speak of a standard basis of
$I$ with respect to a monomial preorder~$<$, meaning a standard basis
$G \subseteq I$ of $I \Kx_<$.

\begin{Theorem}
  Let $I \subseteq \Kx$ be a polynomial ideal. Then the set of all leading ideals of $I$ with respect to
  monomial preorders is finite.
 Hence, there exists a {\em universal standard basis} for $I$, i.e., a finite subset $G \subseteq
  I$ that is a standard basis with respect to all monomial
  preorders.  
\end{Theorem}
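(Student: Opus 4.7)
The plan is to produce a finite universal standard basis for $I$ first, and then deduce the finiteness of the set of leading ideals as a direct corollary. The starting observation is Theorem~\ref{standard}: if $<^*$ denotes the product of a monomial preorder $<$ with a global monomial order $<'$, then $<^*$ is a monomial order and any standard basis of $I$ with respect to $<^*$ is automatically a standard basis with respect to $<$. So it suffices to exhibit a finite $G \subseteq I$ that is a standard basis with respect to every monomial order on $\Kx$; by the argument just described, this $G$ will then be a standard basis with respect to every monomial preorder.

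To produce such $G$, I would invoke the classical universal Gr\"obner basis theorem. For global monomial orders, the statement is the Mora--Robbiano Gr\"obner fan result: the set of initial ideals $L_{<^*}(I)$ as $<^*$ ranges over global monomial orders is finite, and taking the union of reduced Gr\"obner bases over the finitely many Gr\"obner cones yields a finite $G \subseteq I$ that is a Gr\"obner basis with respect to every global monomial order. To cover arbitrary monomial orders (local or mixed), I would use the standard homogenization reduction: any monomial order $<^*$ on $\Kx$ lifts to a global monomial order $\tilde{<}$ on $k[x_0, x_1, \ldots, x_n]$ such that dehomogenizing a Gr\"obner basis of the homogenization $I^h$ with respect to $\tilde{<}$ produces a standard basis of $I$ with respect to $<^*$. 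Applying the global-order result to $I^h$ and dehomogenizing then yields a $G$ that works for every monomial order on $\Kx$, and hence, by the first paragraph, for every monomial preorder.

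With such $G$ in hand, the finiteness of leading ideals is a short combinatorial count. For every preorder $<$ one has $L_<(I) = (L_<(g) : g \in G)$. For each $g \in G$, the leading part $L_<(g) = \sum_{x^a \in \max_<(g)} c_a x^a$ is determined by the subset of the (finitely many) monomials appearing in $g$ that is $<$-maximal, so $\{L_<(g) : <\}$ is finite. Taking tuples over the finite set $G$ gives only finitely many possible leading ideals $L_<(I)$. I expect the main obstacle to be the universal Gr\"obner basis claim for arbitrary monomial orders: the classical Mora--Robbiano theorem is traditionally formulated for global orders, and while the homogenization reduction is routine in spirit, the bookkeeping needed to verify that the dehomogenized basis is genuinely a standard basis under the original (possibly non-global) order on $\Kx$ requires care.
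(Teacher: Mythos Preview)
Your proposal is correct and follows essentially the same route as the paper: reduce from monomial preorders to monomial orders via Theorem~\ref{standard}, invoke the known universal-standard-basis result for monomial orders, and then deduce finiteness of leading ideals by the combinatorial count on~$G$. The only difference is in the step you flag as the obstacle: where you propose a homogenization argument to extend the Mora--Robbiano result from global to arbitrary monomial orders, the paper simply cites Mora--Robbiano~\cite{MR} and Sikora's compactness theorem for the space of all monomial orders~\cite{Sikora:04}, which covers the non-global case directly and bypasses the dehomogenization bookkeeping you anticipated.
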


\begin{proof} 
 For monomial orders, this result was proved by Mora and Robbiano \cite[Proposition 4.1]{MR}.
 It can be also deduced from a more recent result of Sikora in \cite{Sikora:04} on the compactness of the space of all monomial orders.
 By Theorem \ref{standard}, for each monomial preorder $<$, there exists a monomial order $<^*$ 
 such that every standard basis of $I$ with respect to $<^*$ is also a standard basis of $I$ with respect to $<$.
 Therefore, the set of of all leading ideals of $I$ with respect to monomial preorders is finite.
 \end{proof}
 
In the remainder of this paper, we will investigate the problem whether the leading ideal with respect to a monomial preorder $<$ can be used to study properties of the given ideal. \par

First, we will study the case of homogeneous ideals. 
Here and in what follows, the term ``homogeneous'' alone is
used in the usual sense. In this case we can always replace 
a monomial preorder $<$ by a global monomial preorder. 

\begin{Lemma} \label{homogeneous}%
  Let $I$ be a homogeneous ideal in $\Kx$.  Let $<^*$ be the product
  of the degree order with $<$.  
Then~$1 <^* x_i$ for all $i$ and $L_{<^*}(I) = L_<(I)$.
 \end{Lemma}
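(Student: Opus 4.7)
The plan is to check $1 <^* x_i$ immediately from the definition of $<^*$ and then prove $L_{<^*}(I) = L_<(I)$ by double inclusion, exploiting that the homogeneous components of elements of a homogeneous ideal again lie in the ideal. Since $\deg 1 = 0 < 1 = \deg x_i$, the degree order already satisfies $1 < x_i$, and because $<^*$ refines the degree order, $1 <^* x_i$ follows.

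For the inclusion $L_{<^*}(I) \subseteq L_<(I)$, I would take $f \in I$ and write $f = f_0 + \cdots + f_D$ as the sum of its homogeneous components with $f_D \neq 0$. Because $<^*$ compares monomials first by total degree, every $<^*$-maximal monomial of $f$ occurs in $f_D$, and among monomials of a fixed degree $<^*$ reduces to $<$. Hence $L_{<^*}(f) = L_<(f_D)$, and since $I$ is homogeneous we have $f_D \in I$, giving $L_{<^*}(f) \in L_<(I)$.

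For the reverse inclusion, the central claim is that for any $f \in I$ with homogeneous decomposition $f = \sum_d f_d$, one has
\[
L_<(f) = \sum_{d \in D'} L_<(f_d),
\]
where $D'$ is the set of degrees $d$ for which some monomial of $f_d$ is $<$-equivalent to the $<$-maximal monomials of $f$. Granting this, each $f_d$ with $d \in D'$ is homogeneous and belongs to $I$, so that all monomials of $f_d$ have the same total degree and $L_<(f_d) = L_{<^*}(f_d) \in L_{<^*}(I)$; summing then yields $L_<(f) \in L_{<^*}(I)$.

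The main obstacle is the displayed decomposition. By Remark \ref{graded} and Lemma \ref{kernel}, the monomials occurring in $f$ fall into finitely many $<$-equivalence classes forming a totally ordered set under the induced order, and the $<$-maximal monomials of $f$ form a single top class $[a^*]$. The key step is to show that a monomial $x^c$ appearing in $f_d$ is $<$-maximal in $f$ if and only if $[x^c] = [a^*]$: a strictly $<$-greater monomial of $f_d$ would contradict maximality in $f$, while conversely no monomial of $f$ represents a class strictly above $[a^*]$, so any monomial of $f_d$ in class $[a^*]$ is automatically $<$-maximal in $f$. It follows that the $<$-maximal monomials of $f$ lying in $f_d$ are either empty (when $d \notin D'$) or exactly the $<$-maximal monomials of $f_d$ (when $d \in D'$), which yields the decomposition and completes the argument.
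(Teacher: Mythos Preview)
Your argument is correct. The paper, however, takes a shorter route: it observes that for the degree order $<'$ one has $L_{<'}(I) = I$ whenever $I$ is homogeneous (since every homogeneous component of an element of $I$ again lies in $I$), and then invokes Lemma~\ref{finer}\eqref{lRefineB} directly to obtain $L_{<^*}(I) = L_<(L_{<'}(I)) = L_<(I)$ in one stroke. Your double-inclusion argument is essentially a hands-on unpacking of that lemma in this special situation: the inclusion $L_{<^*}(I) \subseteq L_<(I)$ is the content of Lemma~\ref{finer}\eqref{lRefineA}, and your decomposition $L_<(f) = \sum_{d \in D'} L_<(f_d)$ mirrors the $<$-homogeneous-component argument used to prove Lemma~\ref{finer}\eqref{lRefineB}. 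The paper's approach is more economical since the machinery is already in place; yours is more self-contained and makes the role of homogeneity completely explicit. One small remark: your ``key step'' is phrased a bit awkwardly---the equivalence ``$x^c$ is $<$-maximal in $f$ iff $[x^c] = [a^*]$'' is tautological; what actually needs saying (and what you do use) is that for $d \in D'$ the class $[a^*]$ is the top class among the monomials of $f_d$, so the $<$-maximal monomials of $f_d$ coincide with the monomials of $f_d$ lying in $[a^*]$.
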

 
\begin{proof}
Let $<'$ denote the degree order. Then $1 <' x_i$ for all $i$.
Since $<^*$ is a refinement of $<'$, we also have $1 <^* x_i$ for all $i$.
For every polynomial $f$, $L_{<'}(f)$ is a homogeneous component of $f$. 
In particular, $L_{<'}(f) = f$ if $f$ is homogeneous. Since $I$ is a homogeneous ideal,
every homogeneous component of every polynomial of $I$ belongs to $I$.
Therefore, $L_{<'}(I) = I$. By Lemma \ref{finer}\eqref{lRefineB}, this implies
$L_{<^*}(I) = L_<(L_{<'}(I)) = L_<(I).$
\end{proof}

\begin{Corollary} 
Let $I$ be a homogeneous ideal in $\Kx$. Then $L_<(I)$ is a homogeneous ideal. 
\end{Corollary}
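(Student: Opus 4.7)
The plan is to reduce to the global monomial preorder $<^*$ provided by Lemma~\ref{homogeneous}, and then observe that in that refined setting the leading part of every polynomial is automatically homogeneous in the ordinary sense.

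First I would invoke Lemma~\ref{homogeneous} to replace $<$ by the product $<^*$ of the degree order with $<$. This gives $L_<(I) = L_{<^*}(I)$, and moreover $1 <^* x_i$ for all $i$, so in particular $\Kx_{<^*} = \Kx$ and we may work entirely inside $\Kx$ without worrying about clearing denominators.

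Next I would use the crucial feature that $<^*$ is a refinement of the degree order $<'$. If $x^a$ and $x^b$ are monomials of different degrees, then they are comparable under $<'$ and hence, since $<^*$ refines $<'$, also comparable under $<^*$. Equivalently, any two $<^*$-incomparable monomials have the same total degree. By the very definition of the leading part, $L_{<^*}(f)$ is a $k$-linear combination of the $<^*$-maximal monomials appearing in $f$, and these monomials are pairwise $<^*$-incomparable. By the previous sentence they all carry the same degree, so $L_{<^*}(f)$ is a homogeneous polynomial in the usual sense.

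Finally I would conclude: the leading ideal $L_{<^*}(I)$ is generated by the polynomials $L_{<^*}(f)$ with $f \in I$, each of which has just been shown to be homogeneous; hence $L_<(I) = L_{<^*}(I)$ is a homogeneous ideal. There is really no substantive obstacle here, as Lemma~\ref{homogeneous} does all the work of reducing to a preorder that refines the degree order; the only observation needed is that such a refinement forces equivalence classes of $<^*$-incomparable monomials to lie inside a single graded component.
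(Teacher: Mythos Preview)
Your proof is correct and follows the same approach as the paper: invoke Lemma~\ref{homogeneous} to replace $<$ by $<^*$, then use that $<^*$ refines the degree order to conclude that $L_{<^*}(I)$ is homogeneous. The paper compresses your middle paragraph into the single clause ``since $<^*$ is a refinement of the degree order, $L_{<^*}(I)$ is a homogeneous ideal,'' but the content is identical.
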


\begin{proof}
By Lemma \ref{homogeneous},  $L_<(I) = L_{<^*}(I)$. 
Since $<^*$ is a refinement of the degree order,  
$L_{<^*}(I)$ is a homogeneous ideal. 
\end{proof}

Let $HP_R(z)$ denote the Hilbert-Poincare series of a standard graded algebra $R$ over $k$, i.e.
$$HP_R(z) := \sum_{t \ge 0} (\dim_k R_t)z^t,$$
where $R_t$ is the vector space of the homogeneous elements of degree $t$ of $R$ and $z$ is a variable. 
Note that $\dim_k R_t$ is the Hilbert function of $R$.

\begin{Theorem} \label{Hilbert}
Let $I$ be a homogeneous ideal in $\Kx$. Then 
$$HP_{\Kx/I}(z)  =  HP_{\Kx/L_<(I)}(z).$$   
\end{Theorem}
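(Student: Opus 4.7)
The plan is to reduce the claim to the well-known fact that for a monomial order refining the degree order, the Hilbert function of a homogeneous ideal equals that of its leading ideal. This reduction uses the tools already built up in the excerpt, namely Lemma \ref{homogeneous} (which lets us pass to a refinement satisfying $1 <^* x_i$) and Lemma \ref{finer}(b) (which tracks how leading ideals behave under refinement by a product).

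First I would invoke Lemma \ref{homogeneous} to assume without loss of generality that $<$ already refines the degree order, i.e.\ that the product $<^* = $ degree order $\circ\,<$ coincides with $<$; this is legitimate because the leading ideal with respect to $<$ and with respect to this refinement agree on homogeneous $I$, and the Hilbert function of $\Kx/L_<(I)$ depends only on $L_<(I)$. Next I would pick a global monomial order $<'$ refining the degree order (for instance degrevlex) and form the product $<'' := \, <\, \circ\, <'$. Then $<''$ is itself a monomial order, and since it refines $<$ which refines the degree order, $<''$ also refines the degree order.

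The next step is the core computation. Applying Lemma \ref{finer}(b) with the roles of $<$ and $<'$, we get
\[
L_{<''}(I) = L_{<'}\bigl(L_<(I)\bigr).
\]
Now I would use the classical monomial-order fact: for any monomial order refining the degree order and any homogeneous ideal $J \subseteq \Kx$, the set of monomials not lying in $L_{<'}(J)$ restricts in each degree to a $k$-basis of $(\Kx/J)_t$, so $HP_{\Kx/J}(z) = HP_{\Kx/L_{<'}(J)}(z)$. Applying this to $J = I$ with the monomial order $<''$ gives
\[
HP_{\Kx/I}(z) = HP_{\Kx/L_{<''}(I)}(z),
\]
and applying it to $J = L_<(I)$ (which is homogeneous by the preceding corollary) with the monomial order $<'$ gives
\[
HP_{\Kx/L_<(I)}(z) = HP_{\Kx/L_{<'}(L_<(I))}(z) = HP_{\Kx/L_{<''}(I)}(z).
\]
Comparing the two yields the claim.

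The main obstacle, such as it is, is checking that $<''$ indeed refines the degree order so that the classical monomial-order argument applies on both sides of the equation; this is immediate from the definition of the product of preorders, since comparability with respect to the coarser preorder (here the degree order) is inherited by the product. Everything else reduces to invoking results already established in the excerpt plus the standard Hilbert function statement for degree-refining monomial orders.
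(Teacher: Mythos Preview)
Your proof is correct and follows essentially the same route as the paper's: refine $<$ to a monomial order via a product, use Lemma~\ref{finer}\eqref{lRefineB} to identify $L_{<''}(I) = L_{<'}(L_<(I))$, and then apply the known Hilbert-series equality for monomial orders twice. The only difference is cosmetic: you first invoke Lemma~\ref{homogeneous} to arrange that everything refines the degree order, so you can cite the classical standard-monomials argument directly, whereas the paper skips this step and simply cites \cite[Theorem~5.2.6]{GP}, which already covers arbitrary monomial orders on homogeneous ideals.
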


\begin{proof}
By 
Let $<^*$ be the product of $<$ with a monomial order $<'$.  
Since $<^*$ is a monomial order, we can apply \cite[Theorem 5.2.6]{GP} to get
$$HP_{\Kx/I}(z)  =  HP_{\Kx/L_{<^*}(I)}(z).$$
Since $L_{<^*}(I) = L_{<'}(L_<(I))$ by Lemma \ref{finer}\eqref{lRefineB}, 
we can also apply  \cite[Theorem 5.2.6]{GP} to $<'$ and obtain
$$HP_{\Kx/L_<(I)}(z)  =  HP_{\Kx/L_{<^*}(I)}(z).$$ 
Comparing the above formulas we obtain the assertion.
\end{proof}

\begin{Corollary} \label{homogen dim}
Let $I$ be a homogeneous ideal in $\Kx$. Then  
$$\dim \Kx/I = \dim \Kx/L_<(I).$$
\end{Corollary}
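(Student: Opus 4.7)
The plan is to deduce this immediately from Theorem~\ref{Hilbert}. By the corollary preceding it, $L_<(I)$ is a homogeneous ideal, so both $\Kx/I$ and $\Kx/L_<(I)$ are standard graded $k$-algebras. For any such algebra $R$, the Krull dimension is a numerical invariant of the Hilbert--Poincar\'e series: it equals the order of the pole of $HP_R(z)$ at $z=1$, or equivalently one plus the degree of the Hilbert polynomial.

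Since Theorem~\ref{Hilbert} gives $HP_{\Kx/I}(z) = HP_{\Kx/L_<(I)}(z)$, the two series have the same pole order at $z=1$, and hence the two algebras have the same Krull dimension. This is essentially a one-line argument, so there is no real obstacle: the substantive work has already been absorbed into Theorem~\ref{Hilbert}. The only thing to be slightly careful about is that $L_<(I)$ is genuinely homogeneous in the standard sense (so that the Hilbert series computes the usual Krull dimension), but this is exactly the content of the corollary just above Theorem~\ref{Hilbert}.

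If one prefers an entirely self-contained citation, one can invoke a standard reference such as \cite[Corollary~5.1.6]{GP} or the classical fact that for a standard graded $k$-algebra $R$, $\dim R$ coincides with the order of the pole of $HP_R(z)$ at $z=1$. Either way, the corollary follows from Theorem~\ref{Hilbert} with no further computation.
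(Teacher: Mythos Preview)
Your proof is correct and follows essentially the same approach as the paper: both deduce the equality of dimensions directly from Theorem~\ref{Hilbert} by using that the Krull dimension of a standard graded $k$-algebra is determined by its Hilbert series (equivalently, its Hilbert polynomial). Your added remark that $L_<(I)$ is homogeneous, invoking the preceding corollary, makes explicit a point the paper uses implicitly.
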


\begin{proof}
By Theorem \ref{Hilbert}, $\Kx/I$ and $\Kx/L_<(I)$ share the same Hilbert function.
As a consequence, they share the same Hilbert polynomial. 
Since the dimension of a standard graded algebra is the degree of its Hilbert polynomial,
they have the same dimension.
\end{proof}

We shall see in the next section that Corollary \ref{homogen dim} does not hold for arbitrary ideals in $\Kx$ and $\Kx_<$.


\section{Approximation by integral weight orders} \label{sWeight}

In the following we call a weight order $<_w$ {\em integral} if $w \in \ZZ^n$.  
The following result shows that on a finite set of monomials, 
any monomial preorder $<$ can be approximated
by an integral weight order.  This result 
is known for monomial orders \cite[Lemma 1.2.11]{GP}.

For a monomial preorders, the approximation may appear to be difficult since we have to dealt with incomparable monomials, which must have the same weight. A complicated proof for global monomial preorders was given by the first  two authors in \cite[Lemma 3.3]{KT}.

\begin{Lemma} \label{approx 1}
For any finite set $S$ of monomials in $\Kx$ we can find $w \in \ZZ^n$ such that $x^a < x^b$ if and only if $x^a <_w  x^b$ for all $x^a, x^b \in S$.
\end{Lemma}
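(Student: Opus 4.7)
The plan is to combine Robbiano's characterization (Theorem~\ref{Robbiano}) with a two-step approximation: first collapse the rows of the matrix describing~$<$ into a single real weight vector that correctly resolves all comparisons arising from $S$, and then rationalise.

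By Theorem~\ref{Robbiano} there exists a real $m \times n$ matrix $M$ with rows $w_1, \dots, w_m \in \RR^n$ such that $x^a < x^b$ if and only if $M \cdot a <_\lex M \cdot b$. Form the finite sets
\[
D^< := \{a - b \mid x^a, x^b \in S,\ x^a < x^b\} \quad \text{and} \quad D^0 := \{a - b \mid x^a, x^b \in S \text{ incomparable}\};
\]
for $c \in D^<$ the vector $Mc$ is lex-negative, while for $c \in D^0$ one has $Mc = 0$. The goal is to produce $w \in \ZZ^n$ with $w \cdot c < 0$ on $D^<$ and $w \cdot c = 0$ on $D^0$.

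For the real version I would use the classical lex trick. Set $w^* := \sum_{i=1}^{m} N^{m-i} w_i$ with $N>0$ to be fixed. Given $c \in D^<$, let $k = k(c)$ be the smallest index with $w_k \cdot c \neq 0$; then $w_k \cdot c < 0$ and
\[
w^* \cdot c = N^{m-k}\, w_k \cdot c + \sum_{i > k} N^{m-i}\, w_i \cdot c.
\]
Since $D^<$ is finite, a single sufficiently large $N$ makes the leading term dominate in absolute value, forcing $w^* \cdot c < 0$ for every $c \in D^<$. Because $w^*$ lies in the row span of $M$ and every $c \in D^0$ is annihilated by $M$, one automatically has $w^* \cdot c = 0$ on $D^0$.

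It remains to replace $w^*$ by an integer vector, which is the main obstacle. The subspace $V := \{v \in \QQ^n \mid v \cdot c = 0 \text{ for all } c \in D^0\}$ is cut out by rational linear equations, so its $\QQ$-points are dense in $V \otimes_\QQ \RR$, a real subspace that contains~$w^*$. Pick a rational $w' \in V$ close enough to $w^*$ that the finitely many open conditions $w' \cdot c < 0$ ($c \in D^<$) continue to hold, and clear denominators to obtain $w \in \ZZ^n$ with the required sign pattern. The crucial balance is that the equalities for incomparable pairs must be preserved exactly while the strict inequalities are preserved only by continuity; this is reconciled because $D^0 \subseteq \ZZ^n$ forces $V$ to be rationally defined, so rational density within $V \otimes_\QQ \RR$ combined with openness of strict inequalities completes the argument.
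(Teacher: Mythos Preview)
Your argument is correct. The lex-collapse $w^* = \sum_i N^{m-i} w_i$ does give a single real weight vector that agrees with~$<$ on $S$, and the key observation that the equality constraints coming from $D^0$ are \emph{integral} (since $D^0 \subseteq \ZZ^n$) makes the annihilator $V$ a rationally defined subspace, so rational points are dense in $V \otimes_\QQ \RR$ and you can perturb $w^*$ to a rational, then integral, vector without disturbing the finitely many strict inequalities. All steps are sound.

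The paper proceeds differently. Rather than starting from the Robbiano matrix and approximating in $\RR^n$, it works intrinsically: it extends~$<$ to $\QQ^n$ (Lemma~\ref{extension}), passes to the quotient $\QQ^n/E \cong \QQ^s$ by the incomparability subspace (Lemma~\ref{kernel}), pushes the finite set $S$ forward to a set $S' \subseteq \QQ^s$ of difference vectors, and finds an integral $v \in \ZZ^s$ with $v \cdot c < 0$ on $S'$ directly, then pulls $v$ back to $w \in \ZZ^n$. So the paper avoids real numbers and the density argument altogether, at the cost of invoking a separation step in the rational quotient (justified because the induced order on $\QQ^s$ is total, so $S'$ lies in its negative cone). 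Your route, by contrast, is more hands-on: it trades the structural quotient for an explicit weight formula and a perturbation argument, which has the advantage of being entirely constructive once $N$ is chosen and makes the role of the incomparable pairs (as rational linear constraints) very transparent.
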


\begin{proof}
Let $<$ denote the weak order of $\NN^n$ induced by the monomial preorder $<$ in $\Kx$.
By Lemma \ref{extension}, $<$ can be extended to a weak order of $\QQ^n$.
By Lemma \ref{kernel}, the set $E$ of the elements incomparable to $0$ is a linear subspace of $\QQ^n$. 
Let $s = \dim \QQ^n/E$. Let $\phi: \QQ^n \to \QQ^s$ be a surjective map such that $\ker \phi = E$. \par
Set $S' = \{\phi(a) - \phi(b)|\ a, b \in S, a < b\}$.
If $\phi(a) - \phi(b) = - (\phi(a') - \phi(b'))$ for $a,b,a',b' \in S$, $a < b$, $a' <b'$, then
$\phi(a+ a') = \phi(b + b')$.
By Lemma \ref{kernel}, this implies that $a+a'$ and $b+b'$ are incomparable, which is a contradiction to the fact that $a + a' < b+b'$. Thus, if $c \in S'$, then $-c \not\in S'$.\par
Now, we can find an integral vector $v \in \ZZ^s$ such that $v\cdot c < 0$ for all $c \in S'$. Thus, $a < b$ if and only if $v \cdot \phi(a) < v \cdot \phi(b)$ for all $a, b \in S$.
We can extend $v$ to an integral vector $w \in \ZZ^n$ such that 
$w \cdot a = w' \cdot \phi(a)$ for all $a \in \QQ^n$. 
From this it follows that $a < b$ if and only if $w \cdot a < w \cdot b$ for all $a, b \in S$.
Hence  $x^a < x^b$ if and only if $x^a <_w  x^b$.
\end{proof}

Using Lemma \ref{approx 1} we can show that  on a finite set of ideals, any monomial preorder $<$ can be replaced by an integral order. The case of several ideals will be needed in the sequel. 

\begin{Theorem} \label{approx 2}%
  Let $I_1,\ldots,I_r$ be   ideals in $\Kx$.  
Then there exists an integral vector $w = (w_1,...,w_n) \in \ZZ^n$ 
such that $L_<(I_i) = L_{<_w}(I_i)$ for $i = 1,...,r$. 
 \end{Theorem}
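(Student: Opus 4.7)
The plan is to use Lemma \ref{approx 1} to find a~$w$ for which~$<$ and~$<_w$ agree on a sufficiently large finite set of monomials, and then to propagate this agreement to the leading ideals by means of a common monomial-order refinement and Buchberger's criterion.

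First I would fix a global monomial order~$<'$ and set~$<^*$ to be the product of~$<$ with~$<'$; this is a monomial order with $\Kx_{<^*}=\Kx_<$ by Lemma \ref{finer}\eqref{lRefineC}. For each~$i$, I would compute a Gr\"obner basis of $I_i\Kx_<$ with respect to~$<^*$ and then, by multiplying each basis element by a suitable element of~$S_<\subseteq S_{<^*}$ (which does not change leading terms) and adjoining finitely many generators of~$I_i$, arrange a finite set $G_i\subseteq I_i$ that generates~$I_i$ as an ideal of~$\Kx$ (hence also $I_i\Kx_{<_w}$ for any~$w$ we may later choose) and is still a Gr\"obner basis of $I_i\Kx_<$ with respect to~$<^*$. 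By Theorem \ref{standard}, $G_i$ is also a standard basis with respect to~$<$, so $L_<(I_i)=(L_<(g)\mid g\in G_i)$. Buchberger's criterion furnishes, for every pair $g,h\in G_i$, a standard representation $\operatorname{spoly}(g,h)=\sum_k q_{g,h,k}g_{g,h,k}$ with $g_{g,h,k}\in G_i$ and $L_{<^*}(q_{g,h,k}g_{g,h,k})\le_{<^*}L_{<^*}(\operatorname{spoly}(g,h))$.

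Let~$S$ be the finite set of all monomials that appear in some $g\in G_i$, in some $\operatorname{spoly}(g,h)$, or in some reduction term $q_{g,h,k}g_{g,h,k}$, taken over all~$i$ and all pairs. Applying Lemma \ref{approx 1} to~$S$ produces $w\in\ZZ^n$ with~$<$ and~$<_w$ agreeing on~$S$ (in particular with identical incomparability classes on~$S$). Setting~$<^{**}$ to be the product of~$<_w$ with~$<'$, the refinements~$<^*$ and~$<^{**}$ also agree on~$S$, so the Buchberger certificates above are valid verbatim with respect to~$<^{**}$. Therefore~$G_i$ is a Gr\"obner basis of $(G_i)\Kx_{<_w}=I_i\Kx_{<_w}$ with respect to~$<^{**}$, and Theorem \ref{standard} applied to~$<_w$ shows that~$G_i$ is a standard basis of $I_i\Kx_{<_w}$ with respect to~$<_w$. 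Consequently
\[
L_{<_w}(I_i)=(L_{<_w}(g)\mid g\in G_i)=(L_<(g)\mid g\in G_i)=L_<(I_i),
\]
the middle equality holding because each $g\in G_i$ has all its monomials in~$S$.

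The main obstacle is that agreement of~$<$ and~$<_w$ merely on the monomials of a standard basis is not in itself enough to force equality of the two leading ideals; one must enlarge the test set~$S$ to include every monomial appearing in a Buchberger certificate of the Gr\"obner-basis property for~$<^*$, so that the identical certificate transfers to the corresponding refinement~$<^{**}$ of~$<_w$ and the standard-basis property survives the change of preorder.
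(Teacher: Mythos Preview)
Your proof is correct and follows essentially the same strategy as the paper's: refine~$<$ by a global monomial order~$<'$ to a monomial order~$<^*$, take standard bases~$G_i$, apply Lemma~\ref{approx 1} on a finite test set of monomials, and verify that the~$G_i$ remain standard bases for the refinement of~$<_w$ by~$<'$ so that Theorem~\ref{standard} finishes the argument. The only difference is packaging: where you explicitly carry along the Buchberger certificates (s-polynomial reductions) to build the test set~$S$ and then transfer them to~$<^{**}$, the paper simply invokes \cite[Corollary~1.7.9]{GP}, which encapsulates exactly this finite-determinacy of the standard-basis property; your extra bookkeeping about adjoining generators of~$I_i$ so that $(G_i)\Kx_{<_w}=I_i\Kx_{<_w}$ is a legitimate point that the cited result absorbs.
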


\begin{proof} 
  Let $<^*$ be the product of $<$ with a global monomial order $<'$. 
Then $\Kx_{<^*} = \Kx_<$ by  Lemma~\ref{finer}\eqref{lRefineC}.
  For each~$i$, let $G_i \subset \Kx$ be a standard basis of $I_ik[X]_<$ with respect to $<^*$.  
Then $L_<(I_i) = L_<(I_i\Kx_<) = L_<(G_i)$ by Lemma \ref{leading}(a) and Theorem~\ref{standard}.  
Since $<^*$ is a monomial order, there exists a finite set $S_i$ of monomials such
  that $G_i$ is a standard basis of $I_i$ with respect to any monomial
  order coinciding with $<^*$ on $S_i$ \cite[Corollary 1.7.9]{GP}. \par
  
Let $S$ be the union of the set of all monomials of the polynomials in the
  $G_i$ with $\cup_{i=1}^rS_i$.  By Lemma \ref{approx 1}, there is an
  integral vector $w \in \ZZ^n$ such that $L_{<_w}(f) = L_<(f)$ for
  all $f \in S$.  This implies $L_<(G_i) = L_{<_w}(G_i)$   for $i = 1,...,r$.  
Let $<_w^*$ be the product of $<_w$ with $<'$.  For all $f \in S$,
  it follows from the definition of the product of monomial orders
  that
  \[
  L_{<_w^*}(f) = L_{<'}(L_{<_w}(f)) = L_{<'}(L_<(f)) = L_{<^*}(f).
  \]
  So $<_w^*$ coincides with $<^*$ on $S_i$.  Therefore, every $G_i$ is a
  standard basis of $I_i$ with respect to $<_w^*$.  By
  Theorem~\ref{standard}, this implies $L_{<_w}(G_i) = L_{<_w}(I_i)$.
  Summing up we get $L_<(I_i) = L_<(G_i) = L_{<_w}(G_i) =
  L_{<_w}(I_i)$.
\end{proof}

Working with an integral weight order has the advantage that  
we can link an ideal to its leading ideal via the homogenization with respect to the weighted degree. \par 

Let $w$ be an  arbitrary vector in $\ZZ^n$. For every polynomial $f = \sum c_ax^a \in k[X]$ we set $\deg_wf  := \max\{w\cdot a|\ c_\a \neq 0\}$ and define
$$f^{\hom}:= t^{\deg_wf}\big(t^{-w_1}x_1,...,t^{-w_n}x_n\big),$$
where $t$ is a new indeterminate and $w_1,...,w_n$ are the components of $w$. 
Then $f^{\hom}$ is a weighted homogeneous polynomial in
$R := k[X,t]$ with respect to the weighted degree $\deg x_i = w_i$ and $\deg t = 1$.
We may view $f^{\hom}$ as the {\em homogenization} of $f$ with respect to $w$ (see e.g. Kreuzer and Robbiano \cite[Section 4.3]{KR}). 
If we write $f^{\hom}$ as a polynomial in $t$, then $L_{<_w}(f)$ is just the constant coefficient of $f^{\hom}$.  \par

For an ideal $I$ in $k[X]$, we denote by $I^{\hom}$ the ideal in $k[X,t]$ generated by the elements $f^{\hom}$, $f \in I$. 
We call $I^{\hom}$ the {\em homogenization of $I$} with respect to $w$. 
Note that $t$ is a non-zerodivisor in $R/I^{\hom}$  \cite[Proposition~4.3.5(e)]{KR}. 
It is clear that
$$L_{<_w}(I)  = (I^{\hom},t)/(t).$$
On the other hand, the map $x_i \to t^{-w_i}x_i$, $i = 1,...,n$, induces an automorphism of $R[t^{-1}]$.
Let $\Phi_w$ denote this automorphism. Then $\Phi_w(f) = t^{-\deg_w}f^{\hom}$. Therefore,
$$\Phi_w(IR[t^{-1}]) = I^{\hom}R[t^{-1}].$$
From these observations we immediately obtain the following isomorphisms. 

\begin{Lemma} \label{iso} 
With the above notations we have \par
{\rm (a)} $R/(I^{\hom},t) \cong k[X]/L_{<_w}(I),$   \par
{\rm (b)} $(R/I^{\hom})[t^{-1}] \cong (k[X]/I)[t,t^{-1}].$
\end{Lemma}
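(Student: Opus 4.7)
Both parts of this lemma are essentially a packaging of the two observations made in the paragraph directly above the statement, namely that $L_{<_w}(I) = (I^{\hom},t)/(t)$ and that $\Phi_w(IR[t^{-1}]) = I^{\hom}R[t^{-1}]$. The plan is to turn each observation into an isomorphism by applying the third isomorphism theorem in case (a) and by descending the automorphism $\Phi_w$ to the relevant quotient in case (b).

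For part (a), I would start by making explicit why $(I^{\hom},t)/(t) = L_{<_w}(I)$. Writing $f = \sum c_a x^a$ and unpacking the definition of $f^{\hom}$ as $t^{\deg_w f} f(t^{-w_1} x_1,\ldots,t^{-w_n} x_n) = \sum_a c_a t^{\deg_w f - w\cdot a} x^a$, one sees that the exponent of $t$ in each term is non-negative and vanishes precisely for those $a$ with $w\cdot a = \deg_w f$; hence the reduction of $f^{\hom}$ modulo $t$ is $L_{<_w}(f)$. Consequently the image of $I^{\hom}$ under the quotient map $R \twoheadrightarrow R/(t) \cong \Kx$ is exactly $L_{<_w}(I)$, and the third isomorphism theorem gives
\[
R/(I^{\hom},t) \;\cong\; (R/(t))\,/\,((I^{\hom},t)/(t)) \;\cong\; \Kx/L_{<_w}(I).
\]

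For part (b), the idea is to use the automorphism $\Phi_w$ of $R[t^{-1}]$ defined in the text. Since $\Phi_w(f) = t^{-\deg_w f} f^{\hom}$ and $t$ is a unit in $R[t^{-1}]$, one gets $\Phi_w(IR[t^{-1}]) = I^{\hom} R[t^{-1}]$, and hence $\Phi_w$ descends to an isomorphism of quotient rings
\[
R[t^{-1}]/IR[t^{-1}] \;\cong\; R[t^{-1}]/I^{\hom}R[t^{-1}].
\]
The left-hand side equals $(\Kx/I)[t,t^{-1}]$ because $I$ is generated by elements of $\Kx$, and the right-hand side equals $(R/I^{\hom})[t^{-1}]$ by flatness of localization. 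Composing these identifications delivers the claimed isomorphism.

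I do not anticipate a serious obstacle; both parts are formal consequences of the two preceding observations. The only step that deserves real care is the verification that $\Phi_w(IR[t^{-1}])$ coincides as an ideal with $I^{\hom}R[t^{-1}]$: one inclusion is immediate from $\Phi_w(f) = t^{-\deg_w f} f^{\hom}$, while the reverse needs the remark that every generator $f^{\hom}$ of $I^{\hom}$ is a unit multiple of $\Phi_w(f)$ in $R[t^{-1}]$, so that applying $\Phi_w^{-1}$ to $I^{\hom}R[t^{-1}]$ lands back in $IR[t^{-1}]$.
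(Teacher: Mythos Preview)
Your proposal is correct and follows exactly the approach the paper indicates: the paper simply states that the two isomorphisms are immediate from the observations $L_{<_w}(I) = (I^{\hom},t)/(t)$ and $\Phi_w(IR[t^{-1}]) = I^{\hom}R[t^{-1}]$, and you have carefully unpacked those into the third isomorphism theorem for~(a) and the descent of $\Phi_w$ to the quotients for~(b). There is nothing to add.
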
 

The above isomorphisms together with the following result show that there is a flat family of ideals over $k[t]$ whose fiber over $0$ is $k[X]/L_{<_w}(I)$ and whose fiber over $t-\lambda$ is $k[X]/I$ for all $\lambda \in k \setminus 0$.
 
\begin{Proposition}  \label{flat}
$R/I^{\hom}$ is a flat extension of $k[t]$.
\end{Proposition}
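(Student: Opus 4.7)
The plan is to show that $R/I^{\hom}$ is torsion-free as a module over $k[t]$. Since $k[t]$ is a principal ideal domain, flatness over $k[t]$ is equivalent to torsion-freeness, so establishing torsion-freeness will suffice.

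To prove torsion-freeness, I would take $0 \neq p(t) \in k[t]$ and $f \in R$ with $p(t) f \in I^{\hom}$, and aim to deduce $f \in I^{\hom}$. The two inputs I would lean on are already available: (i) $t$ is a non-zerodivisor on $R/I^{\hom}$, as noted before the statement from \cite[Proposition~4.3.5(e)]{KR}; and (ii) Lemma \ref{iso}(b), which says that the localization $(R/I^{\hom})[t^{-1}]$ is isomorphic to $(\Kx/I)[t,t^{-1}]$. The latter is a free module over $k[t,t^{-1}]$ on any $k$-basis of $\Kx/I$, and in particular it is torsion-free over the domain $k[t,t^{-1}]$.

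The argument I would then run is in two steps. First, pass the hypothesis $p(t) f \in I^{\hom}$ to the localization $(R/I^{\hom})[t^{-1}]$; under the isomorphism of Lemma \ref{iso}(b), $p(t)$ remains a nonzero element of the domain $k[t,t^{-1}]$, so it acts injectively on the free (hence torsion-free) module $(\Kx/I)[t,t^{-1}]$. This forces the image of $f$ in $(R/I^{\hom})[t^{-1}]$ to vanish, and translating back gives $t^N f \in I^{\hom}$ for some $N \geq 0$. Second, use input (i) to cancel the factors of $t$ one by one, yielding $f \in I^{\hom}$.

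I do not anticipate a genuine obstacle here, since both (i) and (ii) have already been marshalled. The one point that warrants some care is that $p(t)$ may well be divisible by $t$ in $k[t]$; but in $k[t,t^{-1}]$ powers of $t$ are units, so $p(t)$ remains a non-zerodivisor regardless of its $t$-adic valuation, which is exactly what makes the two-step splitting into ``non-$t$ part'' and ``$t$-part'' work cleanly.
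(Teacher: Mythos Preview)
Your proof is correct. Both you and the paper reduce flatness over the PID $k[t]$ to torsion-freeness and both rely on the fact that $t$ is a non-zerodivisor on $R/I^{\hom}$, but the routes diverge in how a general nonzero $p(t) \in k[t]$ is handled. The paper exploits the weighted-graded structure of $I^{\hom}$: since $I^{\hom}$ is homogeneous for the grading $\deg x_i = w_i$, $\deg t = 1$, one may assume both $p$ and $f$ are homogeneous, which forces $p(t) = \lambda t^d$ and reduces the claim immediately to the non-zerodivisor property of $t$. Your argument instead passes to the localization $(R/I^{\hom})[t^{-1}]$ and uses Lemma~\ref{iso}(b) to identify it with the free $k[t,t^{-1}]$-module $(\Kx/I)[t,t^{-1}]$, concluding $t^N f \in I^{\hom}$ before cancelling the powers of $t$. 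The paper's approach is a touch more elementary and self-contained (it does not invoke Lemma~\ref{iso}(b)); your approach makes transparent why inverting $t$ kills all torsion, and would work equally well in situations where no convenient grading is at hand.
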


This result was already stated for an arbitrary integral order $<_w$ by Eisenbud \cite[Theorem~15.17]{Ei}. 
However, the proof there required that all $w_i$ are positive. This case was also proved by Kreuzer and Robbiano in \cite[Theorem 4.3.22]{KR}.
For the case that $w_i \neq 0$ for all $i$, it was proved by Greuel and Pfister \cite[Exercise~7.3.19 and Theorem~7.5.1]{GP}.

\begin{proof} 
   It is known that a module over a principal ideal domain is flat if and only if it is
  torsion-free (see Eisenbud~\cite[Corollary~6.3]{Ei}). Therefore, we only need to
  show that $k[X,t]/I^{\hom}$ is torsion-free.
Let $g  \in k[t]  \setminus \{0\}$ and $F \in k[X,t] \setminus I^{\hom}$.
Then we have to show that $g F   \notin I^{\hom}$.  
Assume that $gF \in I^{\hom}$.
Since $I^{\hom}$ is weighted homogeneous, we may assume that
$g$ and $F$ are weighted homogeneous polynomials. 
Then $g = \lambda t^d$ for some $\lambda \in k$, $\lambda \neq 0$, and $d \ge 0$. 
Since $t$ is a non-zerodivisor in $R/I^{\hom}$, the assumption $gF \in I^{\hom}$ implies $F  \in I^{\hom}$, a contradiction.  
 \end{proof} 

Now we will use the above construction to study the relationship between the dimension of $I$ and $L_<(I)$.
We will first investigate the case $I$ is a prime ideal.  

\begin{Lemma} \label{prime}   
Let $P$ be a prime ideal of $k[X]$ such that $L_<(P) \neq \Kx$. 
Let $Q$ be an arbitrary  minimal prime of $L_<(P)$. Then
$$\dim \Kx/Q = \dim \Kx/P. $$
\end{Lemma}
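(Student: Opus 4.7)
The plan is to reduce to an integral weight order via Theorem \ref{approx 2} and then carry out a dimension count along the flat family associated with the homogenization $P^{\hom} \subseteq R := \Kx[t]$. First, I would apply Theorem \ref{approx 2} to the single ideal $P$ to obtain a weight vector $w \in \ZZ^n$ with $L_<(P) = L_{<_w}(P)$, so it suffices to prove the claim for the integral weight order $<_w$. Next, I would verify that $P^{\hom}$ is prime in $R$: since $t$ is a non-zerodivisor modulo $P^{\hom}$, the ring $R/P^{\hom}$ injects into its localization at $t$, which by Lemma \ref{iso}(b) is isomorphic to $(\Kx/P)[t,t^{-1}]$, a domain because $P$ is prime.

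With $R/P^{\hom}$ a finitely generated $k$-algebra domain, localization at the nonzero element $t$ preserves the Krull dimension (the transcendence degree over $k$ is unchanged), so
\[
\dim R/P^{\hom} \;=\; \dim (R/P^{\hom})[t^{-1}] \;=\; \dim(\Kx/P)[t,t^{-1}] \;=\; \dim \Kx/P + 1.
\]

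Finally, by Lemma \ref{iso}(a), the minimal primes $Q$ of $L_{<_w}(P)$ in $\Kx$ correspond bijectively to the minimal primes $Q'$ of $(P^{\hom},t)$ in $R$, with $\dim \Kx/Q = \dim R/Q'$. Since $t$ is a non-zerodivisor in the domain $R/P^{\hom}$, Krull's principal ideal theorem gives $\height(Q'/P^{\hom}) = 1$, and the dimension formula for finitely generated $k$-algebra domains then yields $\dim R/Q' = \dim R/P^{\hom} - 1 = \dim \Kx/P$, as desired. The main subtlety is ensuring the primality of $P^{\hom}$ (so that we may speak of $R/P^{\hom}$ as a domain) and justifying the dimension formula via catenarity and equidimensionality for finitely generated $k$-algebra domains; once these are in hand, the conclusion is a direct application of Krull's principal ideal theorem to $t$ in the flat deformation.
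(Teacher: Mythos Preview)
Your proof is correct and follows essentially the same strategy as the paper: reduce to an integral weight order, use primality of $P^{\hom}$, and apply Krull's principal ideal theorem to the non-zerodivisor $t$ in $R/P^{\hom}$. The only cosmetic differences are that you argue primality of $P^{\hom}$ directly (via the embedding into $(\Kx/P)[t,t^{-1}]$) where the paper cites \cite[Proposition~4.3.10(d)]{KR}, and you carry out the count with $\dim$ and transcendence degree where the paper tracks $\height$ via the automorphism $\Phi_w$; these are equivalent formulations of the same computation.
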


\begin{proof}   
By Theorem \ref{approx 2} we may assume that $<$ is an integral weight order $<_w$. 
Let $P^{\hom}$ denote the homogenization of $P$ with respect to $w$.
Then $P^{\hom}$ is a prime ideal \cite[Proposition~4.3.10(d)]{KR}. 
By Lemma \ref{iso}(a), there is a minimal prime $Q'$ of $(P^{\hom},t)$ such that $Q \cong Q'/(t)$. 
Since $t$ is a non-zerodivisor in $R/P^{\hom}$, $\height Q' = \height P^{\hom}+1$ by Krull's principal theorem.
By the automorphism $\Phi_w$, 
$\height P^{\hom} = \height P^{\hom}R[t^{-1}] = \height PR[t^{-1}] = \height P$.
Therefore, 
$$\height Q = \height Q' - 1 =  \height P^{\hom} = \height P.$$
Hence, $\dim \Kx/Q = n - \height Q = n- \height P = \dim \Kx/P.$
\end{proof}

It was conjectured by and Kredel and Weispfening \cite{KW} that if $<$ is a global monomial order, then $\Kx/L_<(P)$ is equidimensional, i.e. $\dim \Kx/Q = \dim \Kx/L_<(P)$ for every minimal prime $Q$ of $L_<(P)$. This conjecture was settled by Kalkbrenner and Sturmfels \cite[Theorem 1]{KS} if $k$ is an algebraically closed field (see also \cite[Theorem 6.7]{HT}). Lemma \ref{prime} extends their result to any monomial preorder.   

\begin{Theorem} \label{dim}  
Let $I$ be an ideal of $k[X]$ and $I^* := Ik[X]_< \cap k[X]$. Then \par
{\rm (a)} $\dim k[X]/L_<(I) = \dim k[X]/I^* \le \dim \Kx/I$. \par
{\rm (b)} If $\Kx/I^*$ is equidimensional, then so is $k[X]/L_<(I)$.
\end{Theorem}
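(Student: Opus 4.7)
\smallskip
\noindent\emph{Plan of proof.}
The plan is to reduce everything to an integral weight order and then to use the homogenization apparatus from Lemmas~\ref{leading}--\ref{iso} and Proposition~\ref{flat} to convert dimension questions about $L_<(I)$ into dimension questions about minimal primes of $I^*$. First, by Lemma~\ref{leading} we have $L_<(I)=L_<(I^*)$, and by Theorem~\ref{approx 2} applied to the single ideal $I^*$ we may assume $<$ is an integral weight order $<_w$. Set $J:=I^*$ and let $P_1,\dots,P_r$ be the minimal primes of $J$ in $\Kx$, so that $\dim\Kx/J=\max_i\dim\Kx/P_i$.

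Next, form the homogenization $J^{\hom}\subset R:=\Kx[t]$ with respect to~$w$. Proposition~\ref{flat} says~$R/J^{\hom}$ is a flat, hence torsion-free, $k[t]$-module, so $t$ is a nonzerodivisor on $R/J^{\hom}$ and therefore lies in no associated (in particular, no minimal) prime of $J^{\hom}$. Combined with Lemma~\ref{iso}(b) and the automorphism $\Phi_w$ of $R[t^{-1}]$, this yields a bijection between the minimal primes of $J^{\hom}$ and those of $J$; concretely, the minimal primes of $J^{\hom}$ are $P_1^{\hom},\dots,P_r^{\hom}$ with $\height P_i^{\hom}=\height P_i$ (the height identity is exactly the one used in the proof of Lemma~\ref{prime}).

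For part~(a), take any minimal prime $Q$ of $L_<(I)=L_<(J)$. By Lemma~\ref{iso}(a), $Q$ corresponds to a minimal prime $Q'$ of $(J^{\hom},t)$ in $R$. Since $Q'\supseteq J^{\hom}$, it contains some $P_i^{\hom}$; hence $Q'\supseteq(P_i^{\hom},t)$, and by minimality $Q'$ is a minimal prime of $(P_i^{\hom},t)$. Because $t$ is a nonzerodivisor modulo $P_i^{\hom}$, Krull's principal ideal theorem gives $\height Q'=\height P_i^{\hom}+1=\height P_i+1$, so
\[
\dim\Kx/Q\;=\;\dim R/Q'-1\;=\;n-\height P_i\;=\;\dim\Kx/P_i.
\]
Taking the maximum over $Q$ yields $\dim\Kx/L_<(I)\le\max_i\dim\Kx/P_i=\dim\Kx/J=\dim\Kx/I^*$. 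Conversely, since $J\subseteq P_i$ implies $L_<(I)=L_<(J)\subseteq L_<(P_i)$, Lemma~\ref{prime} applied to each $P_i$ yields $\dim\Kx/L_<(I)\ge\dim\Kx/P_i$ for every $i$, so $\dim\Kx/L_<(I)\ge\dim\Kx/I^*$. This proves the equality in~(a); the remaining inequality $\dim\Kx/I^*\le\dim\Kx/I$ is immediate from $I\subseteq I^*$.

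Part~(b) is a direct consequence of the displayed identity $\dim\Kx/Q=\dim\Kx/P_i$: every minimal prime of $L_<(I)$ has dimension equal to $\dim\Kx/P_i$ for some minimal prime $P_i$ of $I^*$, so if all $\dim\Kx/P_i$ coincide (i.e.\ $\Kx/I^*$ is equidimensional), then all minimal primes of $L_<(I)$ share the same dimension, which is precisely equidimensionality of $\Kx/L_<(I)$. The main obstacle here is the structural step in the second paragraph---identifying the minimal primes of $J^{\hom}$ with those of $J$ and matching heights---which depends crucially on the flatness statement of Proposition~\ref{flat} (to ensure $t$ avoids every minimal prime of $J^{\hom}$) and on the two isomorphisms of Lemma~\ref{iso}; once this is in place, the dimension bookkeeping via Krull's principal ideal theorem is routine.
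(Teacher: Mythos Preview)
Your argument is correct and follows the same strategy as the paper: pass to $I^*$, reduce to an integral weight order via Theorem~\ref{approx 2}, and use homogenization together with Krull's principal ideal theorem to match each minimal prime of $L_<(I)$ with a minimal prime of $I^*$ of the same dimension (the paper invokes \cite[Proposition~4.3.10]{KR} for the minimal primes of $J^{\hom}$, while you derive this directly from Lemma~\ref{iso} and Proposition~\ref{flat}, which is equally valid). Two small slips to fix: in the displayed equation you want $\dim\Kx/Q=\dim R/Q'$ rather than $\dim R/Q'-1$, since $R/Q'\cong\Kx/Q$ via $R/(t)\cong\Kx$; and when you invoke Lemma~\ref{prime} for the~$P_i$ you should remark that $L_<(P_i)\ne\Kx$, which holds because every minimal prime of $I^*=I\Kx_<\cap\Kx$ is disjoint from $S_<$.
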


\begin{proof}
It is clear that $I^* = \Kx$ if and only if $1 \in I\Kx_< $ if and only if $L_<(I) = \Kx$. 
Therefore, we may assume that $I^* \neq \Kx$. \par

Let $P$ be a minimal prime of $I^*$. Then $P \cap S_< = \emptyset$ because
$P$ is the contraction of a minimal prime of $I\Kx_<$. This means $L_<(P) \neq \Kx$. 
By Proposition \ref{prime}, $\dim \Kx/L_<(P) = \dim \Kx/P.$ 
Choose $P$ such that $\dim \Kx/P = \dim \Kx/I^*$.  
Since $L_<(I) \subseteq L_<(P)$, we have 
$$\dim \Kx/L_<(I) \ge \dim \Kx/L_<(P)  = \dim \Kx/I^*.$$ \par

To prove the converse inequality we use Theorem \ref{approx 2} to choose 
an integral weight order $<_w$ such that $L_<(I) = L_{<_w}(I)$ and $L_<(P) = L_{<_w}(P)$ for all minimal primes $P$ of $I$. Then $L_<(I) \cong (I^{\hom},t)$ and $L_<(P) \cong (P^{\hom},t)/(t)$. \par

Let $Q$ be an arbitrary minimal prime of $L_<(I)$.
Then there is a minimal prime $Q'$ of $(I^{\hom},t)$ such that $Q \cong Q'/(t)$.
Let $P'$ be a minimal prime of $I^{\hom}$ contained in $Q'$. 
Then $Q'$ is also a minimal prime of $(P',t)$.  
By \cite[Proposition 4.3.10]{KR},  
$P' = P^{\hom}$ for some minimal prime $P$ of $I$. 
Hence, $L_<(P) \cong (P',t)/(t).$ 
Therefore, $Q$ is a minimal prime of $L_<(P)$.
By Lemma~\ref{prime}, 
$$\dim \Kx/Q = \dim \Kx/P.$$
Since $(P',t) \subseteq Q'$, $L_<(P) \subseteq Q \neq \Kx$.  
This implies $P \cap S_< = \emptyset$. 
Hence, $P$ is a minimal prime of $I^*$. Therefore, 
$$\dim \Kx/P \le \dim \Kx/I^*.$$
Since there exits $Q$ such that $\dim \Kx/Q = \dim \Kx/L_<(I)$,  we obtain  
$$\dim \Kx/L_<(I) \le \dim \Kx/I^*.$$  
So we can conlude that $\dim k[X]/L_<(I) = \dim k[X]/I^* \le \dim \Kx/I.$ \par

If $\Kx/I^*$ is equidimensional, $\dim \Kx/P = \dim \Kx/I^*$ for all minimal primes $P$ of $I^*$.
As we have seen above, for every minimal prime $Q$ of $L_<(I)$,  there is a minimal prime $P$ of $I^*$ such that $\dim \Kx/Q = \dim \Kx/P$. Therefore, $\dim \Kx/Q = \dim \Kx/I^*$. From this it follows that $\Kx/L_<(I)$ is equidimensional. 
\end{proof}

\begin{Corollary} \label{global}
Let $I$ be an ideal of $k[X]$. Let $<$ be a global monomial preorder. Then\par 
{\rm (a)} $\dim \Kx/L_<(I) = \dim \Kx/I$. \par
{\rm (b)} If $\Kx/I$ is equidimensional, then so is $\Kx/L_<(I)$.
\end{Corollary}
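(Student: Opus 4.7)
The plan is to derive this as an immediate consequence of Theorem~\ref{dim} by verifying that the auxiliary ideal $I^*$ used there coincides with $I$ in the global case.

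First, I would recall the definition of a global monomial preorder: $<$ is global precisely when $\Kx_< = \Kx$, equivalently when $S_< = \{1\}$ (that is, when $1 < x_i$ or $1$ and $x_i$ are incomparable for every $i$, as noted at the start of Section~2). In particular, localization at $S_<$ is trivial, so
\[
I^* := I\Kx_< \cap \Kx = I\Kx \cap \Kx = I.
\]

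With that identification both parts follow by direct substitution into Theorem~\ref{dim}. For part (a), Theorem~\ref{dim}(a) yields
\[
\dim \Kx/L_<(I) = \dim \Kx/I^* = \dim \Kx/I,
\]
where the inequality $\dim \Kx/I^* \le \dim \Kx/I$ of Theorem~\ref{dim}(a) becomes an equality because $I^* = I$. For part (b), the hypothesis that $\Kx/I$ is equidimensional is the hypothesis that $\Kx/I^*$ is equidimensional, so Theorem~\ref{dim}(b) immediately gives that $\Kx/L_<(I)$ is equidimensional.

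I do not anticipate any real obstacle: the content of the corollary is entirely carried by Theorem~\ref{dim}, and the only thing to check is the identification $I^* = I$, which is immediate from the definition of ``global''. No further approximation by integral weight orders or homogenization is needed at this stage, since that work has already been done in the proof of Theorem~\ref{dim}.
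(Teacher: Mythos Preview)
Your proposal is correct and follows exactly the same route as the paper: observe that for a global monomial preorder $\Kx_< = \Kx$, hence $I^* = I$, and then invoke Theorem~\ref{dim}(a) and~(b) directly. The paper's proof is just a terser version of what you wrote.
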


\begin{proof}
For a global monomial preorder $<$, we have $I^* = I$ because $\Kx_< = \Kx$.
Therefore, the statements follow from Theorem \ref{dim}.  
\end{proof}

\begin{Remark}
{\rm If $n \ge 2$ and $<$ is not a global monomial preorder, we can always find an ideal $I$ of $\Kx$ such that
$$\dim \Kx/L_<(I) < \dim \Kx/I.$$
To see this choose a variable $x_i < 1$. Let $I = (x_i-1) \cap (X)$. Then $I^* = (X)$.
By Theorem \ref{dim}(a), $\dim \Kx/L_<(I) = \dim \Kx/I^* = 0$, whereas $\dim \Kx/I = n-1 > 0$.}
\end{Remark}

Now we turn our attention to ideals in the ring $\Kx_<$.  First, we
observe that \linebreak $\dim \Kx_< = n$ because $X$ generates a
maximal ideal of $\Kx_<$ which has height $n$.  However, other maximal
ideals of $\Kx_<$ may have height less than $n$.  The following result
shows that these primes are closely related to the set
$$X_- := \{x_i \mid x_i <1\}.$$

\begin{Lemma} \label{maximal}   
Let $Q$ be a maximal ideal of $k[X]_<$. Then $\height Q = n$ if and only if $X_- \subseteq Q$. 
\end{Lemma}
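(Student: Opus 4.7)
Let $\mathfrak{q} := Q \cap \Kx$. By the standard prime correspondence under localization, $\mathfrak{q}$ is a prime of $\Kx$ disjoint from $S_<$ with $\height Q = \height \mathfrak{q}$. Since $\dim \Kx = n$, the condition $\height Q = n$ is equivalent to $\mathfrak{q}$ being a maximal ideal of $\Kx$, and so the lemma reduces to showing that $\mathfrak{q}$ is maximal in $\Kx$ if and only if $X_- \subseteq \mathfrak{q}$.

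For the forward direction, suppose $\mathfrak{q}$ is maximal in $\Kx$. By Zariski's Lemma, $\Kx/\mathfrak{q}$ is a finite field extension of $k$, so for each $x_i \in X_-$ the image $\bar{x}_i$ satisfies a minimal polynomial $p(T) = T^d + c_{d-1} T^{d-1} + \cdots + c_0 \in k[T]$, and $p(x_i) \in \mathfrak{q}$. If $x_i \notin \mathfrak{q}$, then $\bar{x}_i \neq 0$ forces $c_0 \neq 0$. Since $x_i < 1$, compatibility and cancellativity give $x_i^k < x_i^{k-1} < \cdots < 1$ for every $k \geq 1$, so $L_<(p(x_i)) = c_0 \in k^{\times}$, whence $c_0^{-1} p(x_i) \in S_<$. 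But $c_0^{-1} p(x_i) \in \mathfrak{q}$, contradicting $\mathfrak{q} \cap S_< = \emptyset$.

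For the reverse direction, assume $X_- \subseteq \mathfrak{q}$ and set $X_+ := X \setminus X_-$ and $r := |X_-|$. The plan is first to establish the isomorphism
\[
\Kx_</(X_-)\Kx_< \cong k[X_+].
\]
The crux is that every monomial $x^a$ with $x^a < 1$ is divisible by some $x_i \in X_-$: invoking the totally ordered monoid of equivalence classes from Remark \ref{graded}, the relation $[x^a] = \sum_i a_i [x_i] < [1]$ forces some $a_i > 0$ with $[x_i] < [1]$. Consequently, any $u \in S_<$ is of the form $u = 1 + g$ where every monomial of $g$ is strictly less than $1$; hence $g \in (X_-)$ and $u \equiv 1 \pmod{(X_-)}$. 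The natural ring homomorphism $k[X_+] \to \Kx_</(X_-)\Kx_<$ is then both surjective (since $f/u \equiv f$ modulo $(X_-)\Kx_<$) and injective (because $(X_-)\Kx_< \cap \Kx = (X_-)$ — as $(X_-) \cap S_< = \emptyset$ — and $(X_-) \cap k[X_+] = 0$).

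Under this isomorphism, $Q/(X_-)\Kx_<$ is a maximal ideal of $k[X_+]$, hence of height $n - r$. Lifting a saturated chain of primes of length $n - r$ in $k[X_+]$ ending there, and prepending the chain $(0) \subsetneq (x_{i_1})\Kx_< \subsetneq \cdots \subsetneq (X_-)\Kx_<$ of length $r$ — each $(x_{i_1}, \ldots, x_{i_j})$ is a prime of $\Kx$ disjoint from $S_<$ since any polynomial in it has every monomial divisible by some $x_{i_k}$, so $L_<$ can never equal $1$ — produces a chain of primes of length $n$ in $\Kx_<$ ending at $Q$. Hence $\height Q \geq n$, with equality since $\dim \Kx_< = n$. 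The main obstacle is this reverse direction, specifically the reliance on Remark \ref{graded} to conclude that every monomial strictly less than $1$ is divisible by some element of $X_-$, which is the linchpin of the decisive isomorphism $\Kx_</(X_-)\Kx_< \cong k[X_+]$.
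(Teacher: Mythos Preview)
Your proof is correct and follows the same overall strategy as the paper: the forward direction via maximality of $\mathfrak q$ in $\Kx$ together with a constant-term argument, and the reverse direction via the isomorphism $\Kx_</(X_-)\Kx_< \cong k[X_+]$. Your justification of this isomorphism---using Remark~\ref{graded} to see that every monomial $<1$ is divisible by some element of $X_-$, whence each $u\in S_<$ reduces to $1$ modulo $(X_-)$---is in fact more explicit than the paper's terse parenthetical, and your explicit chain construction is a harmless variant of the paper's height-additivity step $\height Q=\height Q/(X_-)+\height(X_-)$.
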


\begin{proof}    
Assume that $\height Q = n$. Let $Q' = Q \cap \Kx$. Then $\height Q' = \height Q = n$.
Hence $Q'$ is a maximal ideal of $\Kx$.
This implies $Q' \cap k[x_i] \neq 0$ for all $i$. 
Since $Q' \cap k[x_i]$ is a prime ideal, there is a monic irreducible polynomial $f_i$ generating $Q' \cap k[x_i]$. 
For $x_i < 1$,  we must have $f = x_i$ because otherwise $L_<(f_i)$ is the constant coefficient of $f$, which would implies $Q' \cap S_< \neq \emptyset$, a contradiction. Therefore, $X_- \subseteq Q' \subseteq Q$.\par

Conversely, assume that $X_- \subseteq Q$. Then $Q/(X_-)$ is a maximal ideal of the ring $\Kx_</(X_-)$,
which is isomorphic to the polynomial ring $A := k[X \setminus X_-]$ because $A \cap S_< = \emptyset$. 
Therefore, $\height Q/(X_-) =  \dim A = n - \height (X_-)$. Hence 
$$\height Q = \height Q/(X_-) + \height (X_-) = n.$$
\end{proof}

\begin{Theorem} \label{height}%
  Let $I$ be an ideal of $k[X]_<$. Then
  \begin{enumerate}
    \renewcommand{\theenumi}{\alph{enumi}}%
  \item $\height L_<(I) = \height I$,
  \item $\dim \Kx/L_<(I) \ge \dim \Kx_</I$,
  \item $\dim \Kx/L_<(I) = \dim \Kx_</I$ if and only if
    $1 \not\in (P,X_-)$ for at least one prime $P$ of $I$ with
    $\height P = \height I$.
  \end{enumerate}
\end{Theorem}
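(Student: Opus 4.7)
Set $I^* := I \cap \Kx$, so $I = I^*\Kx_<$. As noted in the proof of Theorem~\ref{dim}, every minimal prime $P$ of $I^*$ avoids $S_<$, so the map $P \mapsto P\Kx_<$ is a height-preserving bijection $\operatorname{Min}(I^*) \to \operatorname{Min}(I)$. Combined with Theorem~\ref{dim}(a), which says $\dim \Kx/L_<(I) = \dim \Kx/I^*$, this provides the framework for all three parts.

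For part~(a), since $\Kx$ is Cohen-Macaulay, $\height Q + \dim \Kx/Q = n$ for every prime $Q$ of $\Kx$. Hence $\height L_<(I) = n - \dim \Kx/L_<(I) = n - \dim \Kx/I^* = \height I^*$, and $\height I = \height I^*$ by the bijection above. For part~(b), each $P \in \operatorname{Min}(I^*)$ avoids $S_<$, so $\Kx_</P\Kx_<$ is a localization of $\Kx/P$, giving $\dim \Kx_</P\Kx_< \le \dim \Kx/P$; taking the maximum over $P$ yields $\dim \Kx_</I \le \dim \Kx/I^* = \dim \Kx/L_<(I)$.

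For part~(c), equality in~(b) is equivalent to the existence of some $P \in \operatorname{Min}(I)$ satisfying both $\dim \Kx/P = \dim \Kx/L_<(I)$ (equivalently, $\height P = \height I$, by Cohen-Macaulayness and~(a)) and $\dim \Kx_</P\Kx_< = \dim \Kx/P$. Since $\Kx/P$ is an affine domain in which all maximal ideals have height $\dim \Kx/P$, the second equality holds if and only if there is a maximal ideal $\mm$ of $\Kx$ with $P \subseteq \mm$ and $\mm \cap S_< = \emptyset$.

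The key technical step, and the principal obstacle in the proof, is the claim: a maximal ideal $\mm$ of $\Kx$ is disjoint from $S_<$ if and only if $X_- \subseteq \mm$. The forward direction follows from Lemma~\ref{maximal} applied to the height-$n$ maximal ideal $\mm\Kx_<$ of $\Kx_<$. For the reverse direction, one shows that $x^a < 1$ implies $x^a \in (X_-)$; indeed, by Lemma~\ref{kernel} the variables outside $X_-$ project to non-negative elements of the totally ordered vector space $\QQ^n/E$, so a monomial supported entirely on $X \setminus X_-$ cannot be $<1$. Consequently, for any $u \in S_<$, $u - 1$ is a sum of monomials $<1$, hence lies in $(X_-) \subseteq \mm$, so $u \equiv 1 \pmod{\mm}$ and $u \notin \mm$. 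Combining, the desired maximal ideal $\mm$ exists if and only if $(P, X_-)$ is a proper ideal of $\Kx$, i.e., $1 \notin (P, X_-)$, yielding~(c).
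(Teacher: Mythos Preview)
Your proof is correct. Part~(a) matches the paper exactly. For~(b) and~(c) you take a slightly different but equivalent route.

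For~(b), the paper gives the one-line computation $\dim \Kx/L_<(I) = n - \height L_<(I) = \dim \Kx_< - \height I \ge \dim \Kx_</I$, using $\dim \Kx_< = n$; your argument comparing each $\Kx_</P\Kx_<$ with $\Kx/P$ by localization is correct but longer.

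For~(c), the paper works entirely in $\Kx_<$: since $\Kx_<$ is catenary, equality holds iff some minimal prime $P$ of $I$ with $\height P = \height I$ lies in a maximal ideal of $\Kx_<$ of height~$n$, and then Lemma~\ref{maximal} is invoked directly in both directions to convert this into $1 \notin (P,X_-)$. You instead pass to $\Kx$ and characterize maximal ideals $\mm \subset \Kx$ disjoint from $S_<$ by the condition $X_- \subseteq \mm$. Your argument that $x^a < 1$ forces $x^a \in (X_-)$, via the total order on $\QQ^n/E$ from Lemma~\ref{kernel}, makes explicit a step that the paper leaves terse inside the proof of Lemma~\ref{maximal} (the claim that $\Kx_</(X_-) \cong k[X \setminus X_-]$). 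Both routes are equivalent; yours is more self-contained on this point, while the paper's is shorter by leaning on the catenary property and citing Lemma~\ref{maximal} wholesale. One notational slip: in your third paragraph ``$P \in \operatorname{Min}(I)$'' should read ``$P \in \operatorname{Min}(I^*)$'', since you then write $\Kx/P$ and $P\Kx_<$.
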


\begin{proof} 
Let $J = I \cap \Kx$. By Lemma \ref{leading}(b), $L_<(I) = L_<(J)$.  
Since $I = Jk[X]_<$, we have $J^* = J$. 
By Theorem \ref{dim}(a), this implies $\dim k[X]/L_<(J) = \dim k[X]/J$. 
Hence $\height L_<(J) = \height J$.  
By the correspondence between ideals in a localization and their contractions, $\height J = \height I$.
So we can conclude that $\height L_<(I) = \height I$. \par

From this it follows that
$$\dim k[X]/L_<(I) = n - \height L_<(I) = \dim \Kx_<  - \height I \ge \dim \Kx_</I.$$

The above formula also shows that $\dim \Kx/L_<(I) = \dim \Kx_</I$ if and only if  $n  - \height I = \dim \Kx_</I.$ Being a localization of $\Kx$, $\Kx_<$ is a catenary ring. Therefore, the latter condition is satisfied if and only there exists a prime $P$ of $I$ with $\height P = \height I$ such that $P$ is contained in a maximal ideal of height $n$. \par

Assume that a prime ideal $P$ is contained in a maximal ideal $Q$ of height $n$. 
Then $X_- \subset Q$ by Lemma \ref{maximal}. Hence, $1 \not\in (P,X_-)$ because $(P,X_-) \subseteq Q$. 
Conversely, assume that $1 \not\in (P,X_-)$. 
Then, any maximal ideal containing $(P,X_-)$ has height $n$ by Lemma \ref{maximal}.
\end{proof}

We would like to point out the phenomenon that if $I$ is an ideal of $\Kx$, then \linebreak $\dim \Kx/L_<(I) \le \dim \Kx/I$
by Theorem \ref{dim}(a), whereas if $I$ is an ideal of $\Kx_<$,  
then $\dim \Kx/L_<(I) \ge \dim \Kx_</I$ by Theorem \ref{height}(b).\par

\begin{Remark}
{\rm It is claimed in \cite[Corollary 7.5.5]{GP} that 
$$\dim k[X]_</I = \dim k[X]/L_<(I)$$
for any monomial order $<$. This is not true. For instance, let $<$ be the weight order on $k[x, y]$ with weight $(1,-1)$, refined, if desired, to a monomial order. Consider the irreducible polynomial $f = x^2y + 1$ and the ideal $I = (f)$ in $k[x, y]_<$. Since $L_<(f) = x^2y$, $I$ is a proper ideal and since f is irreducible, $I$ is a prime ideal.
Since $1 \in (I,y)$, we have $\dim k[x,y]_</I < \dim k[x,y]/L_<(I)$ by Theorem \ref{height}(c). 
Actually, $I$ is a maximal ideal of $k[x,y]_<$ because any strictly
bigger prime $Q$ has height 2 and must therefore contain $y$ by Lemma
\ref{maximal}. This implies $1 \in Q$, a contradiction.}
\end{Remark}

The following result characterizes the monomial preorders for which
the equality in Theorem~\ref{height}(c) always holds.

\begin{Proposition} \label{cDim}%
  The implications
  \[
  (a) \ \Longrightarrow \ (b) \ \Longleftrightarrow \ (c) \
  \Longleftrightarrow \ (d) \ \Longleftrightarrow \ (e) \
  \Longrightarrow (f)
  \]
  hold for the following conditions on the monomial preorder~$<$:
  \begin{enumerate}
    \renewcommand{\theenumi}{\alph{enumi}}%
  \item The monomial preorder~$<$ is global or local.
  \item The monomial preorder can be defined, in the sense of
    Theorem~\ref{Robbiano}, by a
    real matrix $\left(\begin{smallmatrix} A \\
        B \end{smallmatrix}\right)$ composed of an upper part $A$
    whose entries are all nonpositive, and a lower part $B$ whose
    entries are all nonnegative.
  \item If $x_i < 1$ then $t < 1$ for every monomial~$t$ that is
    divisible by~$x_i$.
  \item Every maximal ideal of $\Kx_<$ has height~$n$.
  \item For every ideal $I \subseteq \Kx_<$, the equality $\dim
    \Kx/L_<(I) = \dim \Kx_</I$ holds.
  \item If $I \subseteq \Kx_<$ is an ideal such that $\Kx_</I$ is
    equidimensional, then also \linebreak $\Kx/L_<(I)$ is
    equidimensional.
  \end{enumerate}
\end{Proposition}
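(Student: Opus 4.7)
The plan is to establish the chain $(a) \Rightarrow (b) \Rightarrow (c) \Rightarrow (d) \Rightarrow (e) \Rightarrow (f)$ together with the converses $(e) \Rightarrow (d) \Rightarrow (c) \Rightarrow (b)$, the only substantial step being $(c) \Rightarrow (b)$.

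For $(a) \Rightarrow (b)$: if $<$ is global, any matrix $M$ representing $<$ (Theorem~\ref{Robbiano}) has $M e_i \ge_\lex 0$ for all $i$, and inductively replacing each row $r_i$ by $r_i + \sum_{j<i} c_j r_j$ with sufficiently large nonnegative $c_j$'s produces an all-nonnegative matrix, which we take as $B$ with $A$ empty; the local case is symmetric. For $(b) \Rightarrow (c)$: if $<$ is defined by $\binom{A}{B}$ and $x_i < 1$, then $M e_i$ has its first nonzero entry negative and, since $B e_i \ge 0$, this entry lies in the $A$-block; for any monomial $x^a$ the vector $M(a+e_i) = Ma + Me_i$ still has its first nonzero entry in the $A$-block and strictly negative (because $Aa \le 0$ componentwise), so $x^a x_i < 1$.

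For $(c) \Rightarrow (d)$: under (c), for every $x_i \in X_-$ and $g \in \Kx$ every monomial of $x_i g$ is $<1$, so $L_<(1 - x_i g) = 1$ and $1 - x_i g$ is a unit in $\Kx_<$; hence $x_i$ lies in the Jacobson radical, $X_- \subseteq Q$ for every maximal ideal $Q$, and Lemma~\ref{maximal} gives $\height Q = n$. For $(d) \Rightarrow (c)$, suppose (c) fails and pick $x_i \in X_-$ and a monomial $t = x_i x^a$ with $t \not< 1$; a direct calculation of $L_<(1-t)$ (which equals $-t$ if $1 < t$ and $1-t$ otherwise) shows $1-t$ is not a unit in $\Kx_<$, so $1-t$ lies in some maximal $Q$, which by (d) and Lemma~\ref{maximal} contains $X_-$, forcing $t \in Q$ and thus $1 \in Q$---a contradiction. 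For $(d) \Leftrightarrow (e)$: $\Kx_<$ is catenary, so under (d) every prime $P$ satisfies $\dim \Kx_</P = n - \height P$, which together with Theorem~\ref{height}(a) yields (e); conversely, applying (e) to $I = Q$ any maximal ideal forces $\height Q = n$ via Theorem~\ref{height}(a). For $(e) \Rightarrow (f)$: the ideal $I^* = I \cap \Kx$ is $S_<$-saturated, so its associated primes avoid $S_<$ and its minimal primes correspond bijectively under extension/contraction to those of $I$ in $\Kx_<$; combining (e) with Lemma~\ref{prime} shows this bijection preserves the codimension of the quotient, so $\Kx/I^*$ is equidimensional whenever $\Kx_</I$ is, and Theorem~\ref{dim}(b), applied to $I^*$ using $L_<(I) = L_<(I^*)$ from Lemma~\ref{leading}, then yields equidimensionality of $\Kx/L_<(I)$.

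The hard step is $(c) \Rightarrow (b)$. Starting from any $N$ representing $<$, I would first show via (c) that the first row $r_1$ of $N$ has zero entries in every $X_+$-column (else a positive $X_+$-entry combined with a large multiple of that variable and a single $x_j \in X_-$ would give a monomial with $a_- \neq 0$ and $r_1 \cdot a > 0$, contradicting (c)) and nonpositive entries in every $X_-$-column. I would then process rows $r_2, r_3, \ldots$ in turn, applying the preorder-preserving row operation $r_i \leadsto r_i + \sum_{j<i} c_j r_j$ with real coefficients to convert $r_i$ either into an all-nonpositive row (joining the growing $A$-block) or into an all-nonnegative row (joining $B$); an inductive refinement of the argument for $r_1$ shows that exactly one of these conversions is available at each stage, according to whether some $X_-$-variable remains uncaptured by the accumulated $A$-rows---when it does, (c) forces the $X_+$-entries of $r_i$ to vanish, making an all-nonpositive reduction possible. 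Collecting all nonpositive rows as $A$ and all nonnegative rows as $B$ yields the desired representation.
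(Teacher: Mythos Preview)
Your argument is correct and follows essentially the same route as the paper's proof: both establish the chain of implications via Lemma~\ref{maximal} for $(c)\Leftrightarrow(d)$ and reduce $(d)/(e)\Rightarrow(f)$ to Theorem~\ref{dim}(b) by first showing $\Kx/I^*$ is equidimensional. There are only cosmetic differences. For $(d)\Leftrightarrow(e)$ the paper invokes Theorem~\ref{height}(c) directly, whereas you use catenarity together with Theorem~\ref{height}(a); these are equivalent reformulations. For $\Rightarrow(f)$, the paper argues from~$(d)$ that all minimal primes of $I$ have equal height, while you argue from~$(e)$ via Lemma~\ref{prime} that $\dim\Kx/P'=\dim\Kx_</P$ under the extension/contraction bijection; both yield equidimensionality of $\Kx/I^*$ and then finish identically. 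For the step $(c)\Rightarrow(b)$, the paper gives only one sentence (``add a multiple of any row to a lower row''); your longer sketch correctly identifies the mechanism---using~(c) to force vanishing of $X_+$-entries while an uncaptured $X_-$-variable remains, then switching to nonnegative rows once all of $X_-$ is captured---and the details you would still need to fill in (that the $A$-phase really ends before the $B$-phase begins, and that in the $B$-phase every subsequent row can be made nonnegative using the accumulated rows) are no more than what the paper leaves implicit.
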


\begin{proof}
  It is clear that~(a) implies~(b) and~(b) implies~(c). One can
  deduce~(b) from~(c) by using that in a matrix defining~$<$ one can
  add a multiple of any row to a lower row. Moreover, (c) holds if and only if
  $L_<(1 + g) = 1$ for every $g \in (X_-)$, which is equivalent to the
  condition that for all $g \in (X_-)$, $1 + g$ is not contained in
  any maximal ideal of $\Kx_<$, or, equivalently, that $X_-$ is
  contained in all maximal ideals. By Lemma~\ref{maximal}, this means
  that the condition~(d) holds.

  By Theorem~\ref{height}(c), the condition~(e) holds if and only if
  $1 \notin (P,X_-)$ for all primes $P \in \Spec(\Kx_<)$, which is
  equivalent to $X_- \subseteq Q$ for all maximal ideals $Q \subset
  \Kx_<$. By Lemma~\ref{maximal}, this means that the condition~(d)
  holds.

  We finish the proof by showing that~(d) implies~(f). If~(d) holds,
  then all primes $P \subset \Kx_<$ satisfy $\height P = n - \dim
  \Kx_</I$. So if $I$ is an ideal with $\Kx_</I$ equidimensional, then
  all minimal primes of $I$ have the same height. Therefore the same
  is true for all minimal primes of $J := \Kx \cap I$. So $J$ is
  equidimensional, and since $J = J^*$, Theorem~\ref{dim}(b) tells us
  that $\Kx/L(J)$ is equidimensional. But $L(I) = L(J)$, and we are
  done.
\end{proof}

For a moment let $I$ be the defining ideal of an affine variety $V$.
If $<$ is the degree order,  then $<$ is a global monomial preorder. In this case, $L_<(I)$ describes the part at infinity of $V$. 
If $<$ is the reverse degree order, then $<$ is a local monomial preorder. In this case, $\Kx/L_<(I)$ corresponds to the tangent cone of $V$ at the origin. 
Therefore, the implication (a) $\Longrightarrow$ (f) of Proposition~\ref{cDim} (a)  has the following interesting consequences.  

\begin{Corollary}
  Let $V$ be an affine variety.
  \begin{enumerate}
    \renewcommand{\theenumi}{\alph{enumi}}%
  \item If $V$ is equidimensional, then so is its part at infinity.
  \item If $V$ is equidimensional at the origin, then so is its
    tangent cone.
  \end{enumerate}
\end{Corollary}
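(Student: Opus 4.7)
The plan is to deduce both assertions directly from the implication $(a)\Rightarrow(f)$ of Proposition~\ref{cDim}, by choosing in each case a monomial preorder that is either global or local and whose leading ideal carries the geometric meaning required. Fix $I \subseteq \Kx = k[x_1,\ldots,x_n]$ to be the defining ideal of $V$.

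For part~(a), I would take $<$ to be the degree order, i.e.\ the weight order associated to $w = (1,\ldots,1)$. Since $1 < x_i$ for every $i$, this is a global monomial preorder and $\Kx_< = \Kx$, so $\Kx_</I\Kx_< = \Kx/I$. The assumption that $V$ is equidimensional translates exactly to $\Kx/I$ being equidimensional. Proposition~\ref{cDim} then yields that $\Kx/L_<(I)$ is equidimensional, and by the discussion just before the corollary $L_<(I)$ is the defining ideal of the part at infinity of $V$, so this part at infinity is equidimensional.

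For part~(b), I would take $<$ to be the reverse degree order (the weight order with $w = (-1,\ldots,-1)$). Now $x_i < 1$ for every $i$, so $<$ is local and $\Kx_< = \Kx_{(X)}$. Saying that $V$ is equidimensional at the origin is exactly the statement that $\Kx_{(X)}/I\Kx_{(X)} = \Kx_</I\Kx_<$ is equidimensional. Applying Proposition~\ref{cDim} once more gives that $\Kx/L_<(I)$ is equidimensional; since this ring is the associated graded ring of $\Kx/I$ with respect to the maximal homogeneous ideal, it is the coordinate ring of the tangent cone of $V$ at the origin, which is therefore equidimensional.

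I do not anticipate any genuine obstacle: the argument is essentially a repackaging of Proposition~\ref{cDim} together with the standard dictionaries between (i)~the weight orders with $\pm(1,\ldots,1)$ and the global/local property, and (ii)~their leading ideals and the part-at-infinity and tangent-cone constructions. The only point that needs a moment's care is to match the algebraic notion of equidimensionality of $\Kx_</I\Kx_<$ with the geometric notion stated for~$V$ in each case, but this is standard.
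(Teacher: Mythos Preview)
Your proposal is correct and follows exactly the approach of the paper: the corollary is stated there without a separate proof, being presented as an immediate consequence of the implication $(a)\Rightarrow(f)$ of Proposition~\ref{cDim} applied to the degree order (global) and the reverse degree order (local), whose leading ideals yield respectively the part at infinity and the tangent cone. Your write-up simply spells out this deduction in detail.
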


In this context, the question of connectedness is also interesting. A
far reaching result was obtained by Varbaro~\cite{Var}, whose
Theorem~2.5, expressed in the language of this paper, says the
following: If $I \subseteq \Kx$ is an ideal such that $\Spec(\Kx/I)$
is connected in dimension $k \ge 0$ (i.e., its dimension is bigger
than~$k$ and removing a closed subset of dimension less than~$k$ does
not disconnect it), then for any global monomial preorder~$<$, also
$\Spec\left(\Kx/L_<(I)\right)$ is connected in dimension~$k$. The
following examples give a negative answer to the question if this
result carries over to general or local monomial preorders. We thank
F.-O. Schreyer for the second example.

\begin{Example} \label{Schreyer}%
  {\rm  (1)    Let~$<$ be the weight order on $k[x_1,x_2]$ given by $w =
      (1,-1)$. For the prime ideal $I \subseteq k[x_1,x_2]_<$
      generated by $(x_1^2 + 1) x_2 + x_1$, the leading ideal is
      $L_<(I) = \bigl(x_1(x_1 x_2 + 1)\bigr)$. By
      Theorem~\ref{height}, $k[x_1,x_2]_</I$ has dimension~$1$, so its
      spectrum is connected in dimension~$0$. But
      $\Spec\bigl(k[x_1,x_2]/L_<(I)\bigr)$ is not connected.\par
      
    \noindent (2) In $k[x_0,\ldots,x_4]$ consider the polynomials%
      \begin{align*}
        f_1 & = x_0 + x_2 x_3 + x_1 x_4 - x_0 x_4 - x_0^2, \\
        f_2 & = x_3 - x_3 x_4 - x_1 x_3 + x_1 x_2 - x_0 x_3 + x_0 x_2, \\
        f_3 & = x_4 - x_3^2 + x_2 x_3 - x_1^2 - x_0 x_4 + x_0 x_1.%
      \end{align*} The tangent cone at the origin is given by the
      ideal $(x_0,x_3,x_4)$ and, as a short computation shows, at the
      point $(1,0,0,0,0)$ is it is given by $(x_0 + x_4,x_1,x_2)$. The
      projetion $\pi$: $\mathbb{A}^5 \to \mathbb{A}^4$ ignoring the
      first coordinate merges these two points, so applying it to the
      variety $X$ given by the $f_i$ will produce a new variety $Y$
      whose tangent cone at the origin is the union of two planes
      meeting at one point. This can be easily verified, at least in
      characteristic~$0$, by using a computer algebra system such as
      MAGMA~\cite{Mag}.
      Being regular at the origin, $X$ is locally
      integral at the origin, and so the same is true of $Y$. So
      replacing $Y$ by its (only) irreducible component passing through
      the origin, we receive a surface that is connected in
      dimension~$1$, but its tangent space at the origin is not.

      We produced this example by starting with the equations for the
      component of $Y$ through the origin, which were provided to us
      by F.-O. Schreyer.
  }
\end{Example}


\section{Descent of properties and invariants} \label{sLoci}

Let $<$ be an arbitrary monomial order in $\Kx$. 
In this section, we will again relate properties of an ideal and its leading ideal. 
Our results follow the philosophy that the leading ideal never behaves better than the ideal itself, 
so the passage to the leading ideal is a ``degeneration.'' \par

First, we will concentrate on the loci of local properties.   
Let $\PP$ denote a property which an arbitrary local ring may have or not have.  
For a noetherian ring $A$ we let $\Spec_\PP(A)$ denote the $\PP$-locus of $A$, i.e. the set of the primes $P$ 
such that the local ring $A_P$ satisfies $\PP$. \par

We say that $\PP$ is an {\em open property}  if for any finitely generated algebra $A$ over a field, $\Spec_\PP(A)$ is a Zariski-open subset of $\Spec (A)$, i.e. $\Spec_\NP(A) = V(Q)$ for some ideal $Q$ of $A$, where $\NP$ is the negation of $\PP$ and 
$$V(Q) := \{P \in \Spec(A) \mid Q \subseteq P\}.$$ 
We say that $\PP$ is a {\em faithful property} if for every noetherian local ring $(A,\mm)$,  the following conditions are satisfied: \par
(F1) If $A[t]_{\mm A[t]}$ has $\PP$, where $t$ is an indeterminate, then $A$ has $\PP$. \par
(F2) If $A/tA$ has $\PP$ for some non-zerodivisor $t \in \mm$, then $A$ has $\PP$.

\begin{Proposition} \label{open}
$\PP$ is open and faithful if $\PP$ is one of the following properties:\par
{\rm (a)} regular, \par
{\rm (b)} complete intersection, \par
{\rm (c)} Gorenstein, \par
{\rm (d)} Cohen-Macaulay, \par
{\rm (e)} $S_r$ ($r \ge 1$), \par
{\rm (f)} normal, \par
{\rm (g)} integral (domain),\par
{\rm (h)} reduced. 
\end{Proposition}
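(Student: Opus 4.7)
The plan is to separate openness of the $\mathcal{P}$-locus from the two faithfulness conditions~(F1) and~(F2), and to verify each of the eight properties by appealing to classical commutative algebra.

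For openness, every finitely generated $k$-algebra $A$ is excellent, and for $\mathcal{P}$ among (a)--(f) and~(h) openness of the corresponding locus in $\Spec(A)$ is a standard result; see, for instance, Matsumura, \emph{Commutative Ring Theory}, \S 24 and \S 32, or EGA IV. For~(g), the key observation is that $A_P$ is an integral domain if and only if $A_P$ is reduced \emph{and} $P$ contains exactly one minimal prime of~$A$. The first condition is open by~(h), while the set of primes containing at least two minimal primes equals the closed subset $\bigcup_{i\neq j}V(\mathfrak{q}_i+\mathfrak{q}_j)$, so the integral locus is open.

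For condition~(F1), the inclusion $A \hookrightarrow B := A[t]_{\mathfrak{m}A[t]}$ is a faithfully flat local homomorphism whose closed fibre $B/\mathfrak{m}B = \kappa(\mathfrak{m})(t)$ is a field, and therefore trivially satisfies each property on the list. By the classical faithfully-flat-descent theorems (see Matsumura, Theorems~23.7 and~23.9), if $B$ has~$\mathcal{P}$ then so does~$A$; for~(g) this is immediate, since $A$ embeds into the domain~$B$.

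Condition~(F2) is the classical ``lifting through a non-zero-divisor.'' For~(a), the chain
\[
\dim A \;\le\; \dim_{A/\mathfrak{m}}(\mathfrak{m}/\mathfrak{m}^2) \;\le\; \dim_{A/\mathfrak{m}}\bigl(\mathfrak{m}/(\mathfrak{m}^2+tA)\bigr)+1 \;=\; \dim(A/tA)+1 \;=\; \dim A
\]
forces equalities throughout, giving regularity. For~(b) and~(c), a minimal free presentation (respectively, a dualizing module) lifts through the regular element~$t$ via standard results (Matsumura, Theorem~21.2 and Bass's formula). For~(d) and~(e), modding out by the non-zero-divisor~$t$ drops depth and dimension by one at every prime containing~$t$, so the defining equalities or inequalities transfer at such primes; for~(e), primes~$Q$ not containing~$t$ are handled via catenarity, by comparing $\height(Q)$ to $\height(P)$ where $P$ is a minimal prime of $Q+(t)$. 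Normality~(f) reduces via Serre's criterion to $R_1+S_2$, both of which lift along a regular element. For~(g) and~(h), if $xy\in tA$ (respectively, $x^m\in tA$) forces $x$ or~$y$ (respectively~$x$) into~$tA$, then iteration together with Krull's intersection theorem yields $x\in\bigcap_m t^mA=0$. The main technical obstacle is~(F2) for the properties whose definition involves \emph{every} prime of~$A$, namely $S_r$ and normality, since primes of~$A$ not containing~$t$ are not directly constrained by the hypothesis on $A/tA$ and must be reached by the auxiliary catenarity argument sketched above.
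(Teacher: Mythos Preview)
Your overall plan coincides with the paper's: openness via excellence of finitely generated $k$-algebras, (F1) via faithfully flat descent along $A\hookrightarrow A[t]_{\mathfrak m A[t]}$, and (F2) case by case. The paper is terser, dispatching (F2) for (e), (f), (h) by citing Cavaliere--Niesi \cite[Proposition~2.2 and Corollary~2.4]{CN} (applied to $\operatorname{gr}_{(t)}(A)\cong (A/tA)[T]$), and (F2) for (g) by citing \cite[Proposition~3.4.5]{EGA42}; your direct arguments for (a)--(d), (g), (h) are fine and in the same spirit.

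The one substantive gap is your handling of (F2) for $S_r$ (and hence for normality, which you reduce to $R_1+S_2$). The paper's definition of \emph{faithful} requires (F2) for \emph{every} noetherian local ring, with no catenarity hypothesis, so invoking catenarity already falls short of the statement as written. More seriously, even granting catenarity your sketch is incomplete: knowing $\height P=\height Q+1$ and $\depth A_P\ge\min(r,\height P)$ does not by itself yield $\depth A_Q\ge\min(r,\height Q)$, since there is no general inequality bounding $\depth A_Q$ from below in terms of $\depth A_P$ for $Q\subset P$. Some additional input---for instance, an argument via associated primes of quotients by regular sequences, or the passage through the associated graded ring as in Cavaliere--Niesi---is needed to reach primes not containing~$t$. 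The paper's own remark after Theorem~\ref{locus} notes that only rings essentially of finite type over~$k$ (hence catenary and excellent) actually arise in the applications, so the catenarity restriction is harmless in practice; but the $S_r$ step still requires either a complete argument or the citation.
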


\begin{proof}
It is known that any finitely  generated algebra over a field is excellent \cite[Proposition 7.8.3(ii)]{EGA42}.  
If a ring $A$ is excellent, then $\Spec_\PP(A)$ is open when $\PP$ is (a), (d), (e), (f) \cite[Proposition 7.8.3(iv)]{EGA42}, (b), (c) \cite[Corollary 3.3 and Corollary~1.5]{GM}. If $\PP$ is (g) or (h), $\PP$ is obviously open. \par 

The faithfulness of (a)-(d) is more or less straightforward. 
Since the map $A \to A[t]_{\mm A[t]}$ is faithfully flat, we have (F1)
for (e) and $R_{r-1}$ by \cite[Proposition 6.4.1 and Proposition 6.5.3]{EGA42}.
Since a local ring is reduced or normal if it satisfies $S_1$ and $R_0$ or $S_2$ and $R_1$
\cite[Proposition 5.4.5 or Theorem 5.8.6]{EGA42}, this also proves (F1)
for (f) and (h). For (e), (f) and (h) we have (F2) by \cite[Proposition 2.2 and Corollary 2.4]{CN} for the trivial grading. For (g), (F1) is clear and (F2) follows from \cite[Proposition 3.4.5]{EGA42}.
\end{proof}

The following theorem is the main result of this section.

\begin{Theorem} \label{dim locus}%
Let $\PP$ be an open and faithful property. 
Let $I$ be an ideal of $\Kx_<$. Then 
$$\dim \Spec_\NP\big(\Kx_</I\big) \le \dim \Spec_\NP\big(\Kx/L_<(I)\big).$$
\end{Theorem}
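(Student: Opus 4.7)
The plan is to work with the flat family obtained by homogenization with respect to an integral weight order, and to push the $\NP$-locus from the ``generic fiber'' $\Kx/J$, where $J := I \cap \Kx$, down to the ``special fiber'' $\Kx/L_<(I)$.

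By Theorem~\ref{approx 2} I may replace~$<$ by an integral weight order $<_w$ with $L_<(I) = L_{<_w}(I)$; note also $L_<(I) = L_<(J)$ by Lemma~\ref{leading}. Setting $R := \Kx[t]$ and $A := R/J^{\hom}$, Proposition~\ref{flat} makes $A$ flat over $k[t]$, so in particular $t$ is a non-zerodivisor in~$A$, while Lemma~\ref{iso} provides $A/tA \cong \Kx/L_<(I)$ and $A[t^{-1}] \cong (\Kx/J)[t,t^{-1}]$. By openness of~$\PP$, there exists an ideal $\mathfrak Q \subseteq A$ with $\Spec_\NP(A) = V(\mathfrak Q)$. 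The whole argument will then reduce to producing, for $d := \dim \Spec_\NP(\Kx_</I)$, a prime in $V(\mathfrak Q) \cap V(t)$ of dimension at least~$d$.

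Pick $\bar P \in \Spec_\NP(\Kx_</I)$ realizing~$d$, and let $P \subset \Kx$ be its preimage. Under the isomorphism above, the extended prime $\bar P \cdot (\Kx/J)[t,t^{-1}]$ corresponds to a prime $\mathfrak P' \subset A$ with $t \notin \mathfrak P'$, and a short manipulation of localizations shows $A_{\mathfrak P'} \cong (\Kx/J)_P[t]_{P(\Kx/J)_P[t]}$. Since $(\Kx/J)_P = (\Kx_</I)_{\bar P}$ has $\NP$, the contrapositive of~(F1) puts $\mathfrak P'$ in $V(\mathfrak Q)$. Using $(A/\mathfrak P')[t^{-1}] \cong (\Kx/P)[t,t^{-1}]$ and the fact that $A/\mathfrak P'$ is a finitely generated $k$-algebra domain, a transcendence-degree count gives $\dim A/\mathfrak P' = \dim \Kx/P + 1 \ge d + 1$.

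To finish, I pick a minimal prime $\mathfrak P_0 \subseteq \mathfrak P'$ of~$\mathfrak Q$, so $\dim A/\mathfrak P_0 \ge d+1$. If $t \in \mathfrak P_0$ I set $\mathfrak P_1 := \mathfrak P_0$; otherwise I take any minimal prime $\mathfrak P_1$ of $\mathfrak P_0 + tA$, and Krull's Hauptidealsatz together with the catenary, equidimensional structure of the finitely generated $k$-algebra domain $A/\mathfrak P_0$ gives $\dim A/\mathfrak P_1 = \dim A/\mathfrak P_0 - 1 \ge d$. In both cases $t \in \mathfrak P_1 \supseteq \mathfrak Q$, so the contrapositive of~(F2)---valid because $t$ remains a non-zerodivisor in $A_{\mathfrak P_1}$---shows that $\mathfrak P_1/tA \in \Spec_\NP(A/tA) = \Spec_\NP(\Kx/L_<(I))$, with dimension at least~$d$. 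The hardest step is this last piece of dimension bookkeeping: that the drop from $\dim A/\mathfrak P_0$ to $\dim A/\mathfrak P_1$ is exactly~$1$ hinges on $A/\mathfrak P_0$ being a finitely generated $k$-algebra domain, hence catenary and equidimensional, while the fact that we can apply~(F2) at all depends on the flatness provided by Proposition~\ref{flat}.
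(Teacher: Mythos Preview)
Your argument takes a genuinely different route from the paper. The paper's proof is modular: it first establishes the stronger Theorem~\ref{locus} (if $V(J/I) \subseteq \Spec_\NP(\Kx_</I)$ then $V(L_<(J)/L_<(I)) \subseteq \Spec_\NP(\Kx/L_<(I))$), and then obtains the dimension inequality by letting $J$ define the $\NP$-locus and invoking $\dim \Kx_</J \le \dim \Kx/L_<(J)$ from Theorem~\ref{height}(b). You instead follow a single prime through the flat family: (F1) puts $\mathfrak P'$ into $\Spec_\NP(A)$, a transcendence-degree count gives $\dim A/\mathfrak P' \ge d+1$, the Hauptidealsatz controls the dimension drop when you intersect with $V(t)$, and (F2) lands you in $\Spec_\NP$ of the special fiber. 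Both proofs rest on the same ingredients (Proposition~\ref{flat}, Lemma~\ref{iso}, and the faithfulness axioms), but the paper's organization yields Theorem~\ref{locus} as a reusable byproduct, while yours is a more direct one-shot argument for the dimension bound alone.

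There is one gap you should patch. To take a minimal prime $\mathfrak P_1$ of $\mathfrak P_0 + tA$ you need this ideal to be proper. Since $\mathfrak P_0 \subseteq \mathfrak P' = P^{\hom}/J^{\hom}$, it would suffice that $\mathfrak P' + tA \ne A$, which unwinds via Lemma~\ref{iso}(a) to $L_{<_w}(P) \ne \Kx$. But you chose $w$ only so that $L_{<_w}(J) = L_<(J)$; nothing prevents $P$ from meeting $S_{<_w}$ even though $P \cap S_< = \emptyset$. The fix is to reorder: pick $\bar P$ (and hence $P \subset \Kx$) \emph{before} choosing $w$---this is legitimate since $\Spec_\NP(\Kx_</I)$ does not depend on $w$---and then apply Theorem~\ref{approx 2} simultaneously to $J$ and $P$. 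Because $P$ is the contraction of a prime of $\Kx_<$ one has $L_<(P) \ne \Kx$, hence now $L_{<_w}(P) = L_<(P) \ne \Kx$ as required. (As a side remark: since $\mathfrak P_0 \subseteq \mathfrak P'$ and $t \notin \mathfrak P'$, the branch $t \in \mathfrak P_0$ is vacuous; you could drop it, or skip $\mathfrak P_0$ altogether and take $\mathfrak P_1$ minimal over $\mathfrak P' + tA$ directly.)
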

 
As we will see, Theorem \ref{dim locus} follows from the following stronger result, 
which relates the  $\NP$-loci of $\Kx_</I$ and $\Kx/L_<(I)$.

\begin{Theorem} \label{locus}%
Let $\PP$ be an open and faithful property. 
Let $I \subseteq J$ be  ideals in $\Kx_<$ such that  
$V(J/I) \subseteq \Spec_\NP\big(\Kx_</I\big)$. Then
$$V\big(L_<(J)/L_<(I)\big) \subseteq \Spec_\NP\big(\Kx/L_<(I)\big).$$
\end{Theorem}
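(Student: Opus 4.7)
The plan is to argue by contrapositive: I assume some prime $Q \subseteq \Kx$ with $L_<(J) \subseteq Q$ satisfies the condition that $(\Kx/L_<(I))_Q$ has $\PP$, and I derive a prime in $V(J/I)$ where $\PP$ also holds, contradicting the hypothesis. To set up the machinery, I first replace $I$ and $J$ by $I \cap \Kx$ and $J \cap \Kx$, which by Lemma~\ref{leading} have the same leading ideals, so I may assume $I, J \subseteq \Kx$. By Theorem~\ref{approx 2} applied to $\{I, J\}$, I may further replace~$<$ by an integral weight order $<_w$, so the homogenization apparatus of Section~\ref{sWeight} applies. With $R := \Kx[t]$ and $I^{\hom}, J^{\hom} \subseteq R$, Lemma~\ref{iso}(a) lifts $Q$ to a prime $Q'' \subseteq R$ containing $I^{\hom} + J^{\hom} + (t)$ with $(R/I^{\hom})_{Q''}/(t) \cong (\Kx/L_<(I))_Q$; by Proposition~\ref{flat} the element $t$ is a non-zerodivisor on $R/I^{\hom}$, and condition~(F2) then gives that $(R/I^{\hom})_{Q''}$ carries $\PP$.

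The crux of the argument --- the step I expect to be the main obstacle --- is to move $Q''$ off the special fiber $V(t)$ while remaining in $V(J^{\hom})$, inside the $\PP$-locus, and outside $V(u^{\hom})$ for every $u \in S_<$; this last condition is what guarantees that the resulting prime of $\Kx$ does not meet $S_<$. To handle all of these simultaneously, I would first localize $R/I^{\hom}$ at the multiplicative set $T := \{u^{\hom} : u \in S_<\}$. Because $u^{\hom} \equiv 1 \pmod{t}$ for every $u \in S_<$, the prime $Q''$ avoids $T$ and therefore survives this localization, still carrying $\PP$. Since $\PP$ is open, the $\PP$-locus of $T^{-1}(R/I^{\hom})$ is open. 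Because $t$ is a non-zerodivisor modulo $J^{\hom}$ (Proposition~\ref{flat} again), no minimal prime of $J^{\hom}$ contains $t$, so $V(J^{\hom}) \cap D(t)$ is dense in $V(J^{\hom})$; intersecting this dense set with the open $\PP$-locus produces a prime $P' \subseteq R$ with $I^{\hom}, J^{\hom} \subseteq P'$, $t \notin P'$, $T \cap P' = \emptyset$, and $(R/I^{\hom})_{P'}$ having $\PP$.

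Finally, I translate $P'$ back to $\Kx$ via Lemma~\ref{iso}(b) and the automorphism $\Phi_w$, obtaining a prime $\hat P \subseteq \Kx[t,t^{-1}]$ with $I, J \subseteq \hat P$. Since $\Phi_w(u) = u^{\hom}$ for $u \in S_<$, the condition $T \cap P' = \emptyset$ becomes $S_< \cap \hat P = \emptyset$. Set $p := \hat P \cap \Kx$. The generic subprime $\hat P_1 := p\Kx[t,t^{-1}] \subseteq \hat P$ still satisfies $J \subseteq p$ and $S_< \cap p = \emptyset$, and $(\Kx[t,t^{-1}]/I)_{\hat P_1}$ has $\PP$ since $\PP$ is preserved under localization of local rings. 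A direct computation yields $(\Kx[t,t^{-1}]/I)_{\hat P_1} = A[t]_{pA[t]}$ with $A := (\Kx/I)_p$ (using that $t$ is already a unit there), so condition~(F1) forces $A$ to have $\PP$. But then $p\Kx_<$ lies in $V(J/I)$ with $(\Kx_</I)_{p\Kx_<} = A$, contradicting $V(J/I) \subseteq \Spec_\NP(\Kx_</I)$.
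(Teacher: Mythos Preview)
Your overall strategy mirrors the paper's closely: reduce to contracted ideals in $\Kx$, pass to an integral weight order $<_w$, lift the offending prime to $R = \Kx[t]$ over $I^{\hom}$, apply (F2) to strip off $t$, move to a prime of $V(J^{\hom})$ not containing $t$ using openness of $\PP$, and then use $\Phi_w$ and (F1) to descend to a prime over $J$ where $\PP$ holds. The one structural difference is that the paper first verifies that the hypothesis itself transfers to $\Kx$ --- i.e., that $V(J^*/I^*) \subseteq \Spec_\NP(\Kx/I^*)$ --- so that at the end any prime of $\Kx$ containing $J^*$ already gives the contradiction, with no need to track $S_<$ at all.

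There is, however, a genuine gap in your handling of $S_<$. You assert that $u^{\hom} \equiv 1 \pmod t$ for every $u \in S_<$, but in fact $u^{\hom} \bmod t = L_{<_w}(u)$, and Theorem~\ref{approx 2} only arranges $L_{<_w}(I) = L_<(I)$ and $L_{<_w}(J) = L_<(J)$; it does \emph{not} guarantee $S_< \subseteq S_{<_w}$. Concretely, if some monomial $x^a$ with $x^a < 1$ fails to satisfy $w \cdot a < 0$ --- and nothing in the choice of $w$ prevents this, since $w$ is only constrained on the finitely many monomials coming from standard bases of $I$ and $J$ --- then $1 + x^a \in S_<$ but $(1+x^a)^{\hom} \not\equiv 1 \pmod t$. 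So $Q''$ may well meet $T$, and your localization at $T$ collapses. The companion claim ``$\Phi_w(u) = u^{\hom}$ for $u \in S_<$'' fails for the same reason (in general $\Phi_w(u) = t^{-\deg_w u} u^{\hom}$).

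The fix is to drop $T$ entirely and follow the paper's cleaner route. Having shown that $(R/I^{\hom})_{Q''}$ has $\PP$, pass directly to a \emph{minimal} prime $Q'$ of $J^{\hom}$ contained in $Q''$; openness of $\PP$ gives that $(R/I^{\hom})_{Q'}$ has $\PP$, and since $t$ is a non-zerodivisor on $R/J^{\hom}$, automatically $t \notin Q'$. Under $\Phi_w$ this yields a minimal prime $Q$ of the contracted ideal $J^* = J \cap \Kx$, and such a prime is \emph{automatically} disjoint from $S_<$, being the contraction of a prime of $\Kx_<$. Your density argument then reduces to the simple observation ``pass to a generalization inside the open $\PP$-locus,'' and the whole $T$-apparatus becomes unnecessary.
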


\begin{proof}
Set $I^* = I \cap \Kx$ and $J^* = J \cap \Kx$. Then $I^* \subseteq J^*$.
By Lemma \ref{leading}, $L_<(I) = L_<(I^*)$ and $L_<(J) = L_<(J^*)$.
Let $P$ be an arbitrary minimal prime of $J^*$ and $\wp$ the corresponding minimal prime of $J$.
Then $(\Kx/I^*)_P = (\Kx_</I)_\wp$.
Since $V(J/I) \subseteq \Spec_\NP\big(\Kx/I\big)$, $(\Kx_</I)_\wp$ does not have $\PP$.
Hence,  $(\Kx/I^*)_P$ does not have $\PP$. 
This shows that $V(J^*/I^*) \subseteq \Spec_\NP\big(\Kx/I^*\big)$. \par 

Now, replacing $I$ and $J$ by $I^*$ and $J^*$ we may assume that $I \subseteq J$ are ideals in $\Kx$ 
such that $V(J/I) \subseteq \Spec_\NP\big(\Kx/I\big)$.  
By Theorem \ref{approx 2} we may assume that $<$ is an integral weight order $<_w$.
Suppose that $V\big(L_<(J)/L_<(I)\big) \not\subseteq \Spec_\NP\big(\Kx/L_<(I)\big).$
Then there exists a minimal prime $P$ of $L_<(J)$ such that
$\big(k[X]/L_<(I)\big)_P$ has $\PP$.
Let $R = k[X,t]$ and $I^{\hom}, J^{\hom}$ be the homogenizations of $I,J$ in $R$ with respect to $w$.
By Lemma \ref{iso}, we have
\begin{align*}
R/(I^{\hom},t) & \cong  k[X]/L_<(I),\\
R/(J^{\hom},t) & \cong  k[X]/L_<(J).
\end{align*} 
Therefore, there exists a minimal prime $P'$ of $(J^{\hom},t)$ such that
$$\big(R/(I^{\hom},t)\big)_{P'} \cong \big(k[X]/L_<(I)\big)_P.$$ 
Since $t$ is a non-zerodivisor in $R/I^{\hom}$, using the faithfulness of $\PP$ we can deduce that
$\big(R/I^{\hom}\big)_{P'}$ also has $\PP$. \par

Let $Q'$ be a minimal prime of $J^{\hom}$ such that $Q' \subseteq P'$.
Since $\PP$ is an open property, $\big(R/I^{\hom}\big)_{Q'}$ also has $\PP$.
Since $t$ is a non-zerodivisor in $R/J^{\hom}$, $t \not\in Q'$. Therefore,
$Q'R[t^{-1}]$ is a prime ideal and
$$\big(R/I^{\hom}\big)_{Q'} = (R/I^{\hom})[t^{-1}]_{Q'R[t^{-1}]}.$$
Let $\Phi_w$ be the automorphism of $R[t^{-1}]$ introduced before Lemma \ref{iso}.
We know that $\Phi_w(I^{\hom}R[t^{-1}]) = IR[t^{-1}]$ and $\Phi_w(J^{\hom}R[t^{-1}])= JR[t^{-1}]$.
Thus, $\Phi_w(Q'R[t^{-1}]) = QR[t^{-1}]$ for some minimal prime $Q$ of $J$ and 
$$(R/I^{\hom})[t^{-1}]_{Q'R[t^{-1}]} \cong (R/IR)[t^{-1}]_{QR[t^{-1}]}.$$
It is easy to see that
$$(R/IR)[t^{-1}]_{QR[t^{-1}]} = (k[X]/I)[t]_{QR}.$$
Therefore, $(k[X]/I)[t]_{QR} \cong \big(R/I^{\hom}\big)_{Q'}$ has $\PP$. 
Since $\PP$ is faithful, $k[X]/I$ also has $\PP$. 
So we obtain a contradiction to the assumption that $V(J/I) \subseteq \Spec_\NP\big(\Kx/I\big)$.
\end{proof}

Now, we are ready to prove Theorem~\ref{dim locus}.

\begin{proof}[Proof of Theorem~\ref{dim locus}]
 Let $J$ be the defining ideal of the $\NP$-locus of $\Kx_</I$, i.e., \linebreak $V(J/I) = \Spec_\NP\big(\Kx/I\big)$.
 Then 
$\dim  \Spec_\NP\big(\Kx_</I\big)  = \dim \Kx_</J$.
 By Theorem~\ref{height}(b), $\dim \Kx_</J  \le \dim \Kx/L_<(J)$.
 By Theorem~\ref{locus},  $V\big(L_<(J)/L_<(I)\big) \subseteq \Spec_\NP\big(\Kx/L_<(I)\big).$
 Hence, 
$\dim \Kx/L_<(J) \le \dim \Spec_\NP\big(\Kx/L_<(I)\big).$
 So we can conclude that $\dim \Spec_\NP\big(\Kx_</I\big)  \le \dim  \Spec_\NP\big(\Kx/L_<(I)\big)$.  
\end{proof}

\begin{Remark}
{\rm Theorem \ref{locus} still holds if we replace the assumption on the openess of $\PP$ by the weaker condition that if $A_P$ has $\PP$ then so is $A_Q$ for all primes $Q \subset P$. This condition is actually used in the  proof of Theorem \ref{locus}. The openess of $\PP$ is only needed to have the dimension of the $\PP$-loci in Theorem \ref{dim locus}.
Moreover, one can also replace property (F2) by the weaker but more complicated condition that $A$ has $\PP$  if $A/tA$ has $\PP$ for some non-zerodivisor $t$ of $A$ such that $A$ is flat over $k[t]$, where $A$ is assumed to be a local ring essentially of finite type over $k$. In fact, we have used (F2)  for a local ring which is of this type by Proposition \ref{flat}. This shows that Theorems~\ref{dim locus} and~\ref{locus} extend to the case that $\PP$ is one of the following properties: the Cohen-Macauly defect or the complete intersection defect is at most~$r$, where $r$ is a fixed integer.}
\end{Remark}

The proof of Theorem \ref{locus} shows that it also holds for ideals in $\Kx$.
However, the following example shows that Theorem \ref{dim locus} does not hold if $I$ is an ideal of $\Kx$.

\begin{Example}  
{\rm Consider an affine variety that has the origin as a regular point
  but has singulatities elsewhere, such as the curve given by $I =
  \bigl(y^2 - (x - 1)^2x\bigr) \subseteq k[x,y]$ with char$(k) \ne
  2$. In such an example, if $\PP$ is the property {\em regular}, we
  have $\dim \Spec_\NP\big(\Kx/I\big) \ge 0$ but $\dim
  \Spec_\NP\big(\Kx/L_<(I)\big) < 0$.
}
\end{Example}

Theorem \ref{locus} shows that if $\Spec_\NP(\Kx/L_<(I)) = \emptyset$, then $\Spec_\NP(\Kx_<(I)) = \emptyset$. 
Hence, we has the following consequence. 

\begin{Corollary}  \label{all}
Let $\PP$ be an open and faithful property. Let $I$ be  an ideal in $\Kx_<$.
If $\PP$ holds at all primes of $\Kx/L_<(I)$, then it also holds at all primes of  $\Kx_</I$. 
\end{Corollary}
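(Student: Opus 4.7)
The plan is to deduce the corollary from Theorem~\ref{locus} combined with Theorem~\ref{equality}, arguing by contradiction. Suppose there is a prime $\wp$ of $\Kx_</I$ at which $\PP$ fails, and let $J$ be its preimage in $\Kx_<$, so that $\wp = J/I$ and $V(J/I) = V(\wp)$ in $\Spec(\Kx_</I)$. First I would verify that in fact $V(\wp) \subseteq \Spec_\NP(\Kx_</I)$: for any prime $Q \supseteq \wp$, the local ring $(\Kx_</I)_\wp$ is a further localization of $(\Kx_</I)_Q$, and each of the properties listed in Proposition~\ref{open} is stable under localization, so if $\PP$ held at $Q$ it would also hold at $\wp$. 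Hence $V(J/I) \subseteq \Spec_\NP(\Kx_</I)$.

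Next, I would apply Theorem~\ref{locus} to the pair $I \subseteq J$ to obtain
\[
V\bigl(L_<(J)/L_<(I)\bigr) \subseteq \Spec_\NP\bigl(\Kx/L_<(I)\bigr).
\]
The hypothesis of the corollary says the right-hand side is empty, so $V(L_<(J)/L_<(I)) = \emptyset$ in $\Spec(\Kx/L_<(I))$. Since $L_<(I) \subseteq L_<(J)$, this forces $L_<(J) = \Kx$.

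To finish, I would observe that $L_<(\Kx_<) = \Kx$ as well, because $1 \in \Kx_<$. Thus $J$ and $\Kx_<$ are two ideals of $\Kx_<$ with $J \subseteq \Kx_<$ and the same leading ideal, and Theorem~\ref{equality} yields $J = \Kx_<$. This contradicts the assumption that $\wp = J/I$ is a proper (indeed prime) ideal of $\Kx_</I$, and we conclude that $\Spec_\NP(\Kx_</I) = \emptyset$.

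There is no substantive obstacle here: once Theorem~\ref{locus} is in hand the statement is essentially a repackaging, with the one routine verification being upward-closedness of the $\NP$-locus, i.e.\ that $V(\wp) \subseteq \Spec_\NP(\Kx_</I)$, which holds for each property in Proposition~\ref{open} by stability under localization. If one prefers a version that does not refer to individual primes, the same argument goes through by instead taking $J$ to be an ideal with $V(J/I) = \Spec_\NP(\Kx_</I)$, whose existence follows from the openness of $\Spec_\PP$ transferred from $\Kx/I^*$ via the identification $(\Kx_</I)_\wp = (\Kx/I^*)_{\wp \cap \Kx/I^*}$.
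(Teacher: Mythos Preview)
Your proposal is correct and follows essentially the same route as the paper, which deduces the corollary in one line from Theorem~\ref{locus} by noting that $\Spec_\NP(\Kx/L_<(I)) = \emptyset$ forces $\Spec_\NP(\Kx_</I) = \emptyset$; you have simply filled in the details (choosing $J$, invoking Theorem~\ref{equality} to get $J = \Kx_<$ from $L_<(J) = \Kx$). One small remark: in your first version the justification ``each of the properties listed in Proposition~\ref{open} is stable under localization'' is property-specific, whereas the corollary is stated for an arbitrary open and faithful $\PP$; your second version, taking $J$ with $V(J/I) = \Spec_\NP(\Kx_</I)$ via the transfer from $\Kx/I^*$, handles the general case cleanly and matches the paper's implicit argument.
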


For a positive integral weight order $<_w$, Bruns and Conca \cite[Theorem 3.1]{BC} shows that the properties Gorenstein, Cohen-Macaulay, normal,  integral, reduced are passed from $\Kx/L_{<_w}(I)$ to $\Kx/I$.  Their proof is based on the positively graded structure of $\Kx$ induced by $w$, which is not available for any integral weight order. \par 

The following corollary gives a reason why it is often easier to work with $L_<(I)$ instead of $I$.  

\begin{Corollary} \label{MthenAll}%
  Let $\PP$ be an open and faithful property, and assume that the
  monomial order~$<$ is such that $1$ is comparable to all other
  monomials. (This assumption is satisfied if $x_i > 1$ for all~$i$ or
  if $<$ is local or if $<$ is a monomial order.)  Let $I$ be a proper
  ideal in $\Kx_<$.  If $\PP$ holds at the maximal ideal
  $\mathfrak{m} = (X)/L_<(I)$ of $\Kx/L_<(I)$, then it also holds at
  all primes of $\Kx_</I$.
\end{Corollary}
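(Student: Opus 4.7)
The plan is to argue by contradiction via Theorem~\ref{locus}. I would suppose that $\PP$ fails at some prime of $\Kx_</I$ and derive that $\PP$ must then also fail at $\mm = (X)/L_<(I)$, contradicting the hypothesis.

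Since $\PP$ is open, the $\NP$-locus of $\Kx_</I$ is a closed subset of $\Spec(\Kx_</I)$ and hence equals $V(J/I)$ for some proper ideal $J \supseteq I$ in $\Kx_<$ (properness of $J$ reflecting the non-emptiness of the locus). The key step, and the only place where the hypothesis on $<$ enters, is to show that $L_<(J) \subseteq (X)$. For this I would first observe that an element $f \in \Kx_<$ is a unit precisely when $L_<(f) \in k^*$, since $L_<$ is multiplicative on $\Kx_<$ and the units of $\Kx$ are the nonzero constants. Under the hypothesis that $1$ is comparable to all other monomials, the equivalence class of $1$ under incomparability reduces to $\{1\}$ alone, so any leading form $L_<(f)$ with a nonzero constant term must consist solely of the monomial $1$ (as all monomials in $L_<(f)$ lie in a single equivalence class), forcing $L_<(f) \in k^*$ and making $f$ a unit. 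As a proper ideal contains no units, every $f \in J$ satisfies $L_<(f) \in (X)$, giving $L_<(J) \subseteq (X)$; the same reasoning applied to $I$ gives $L_<(I) \subseteq (X)$.

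It then follows that $\mm = (X)/L_<(I)$ contains $L_<(J)/L_<(I)$, so $\mm \in V(L_<(J)/L_<(I))$. By Theorem~\ref{locus} applied to the pair $I \subseteq J$, we have $V(L_<(J)/L_<(I)) \subseteq \Spec_\NP(\Kx/L_<(I))$, and therefore $\PP$ fails at $\mm$, contradicting the assumption. This will complete the proof.

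The only real work is in the key step about units, where the hypothesis on $<$ must be used carefully to pin down $L_<(J) \subseteq (X)$; once this is in hand, Theorem~\ref{locus} does the rest. The parenthetical cases listed in the statement (namely, $x_i > 1$ for all~$i$, or $<$ local, or $<$ a monomial order) are easily seen to satisfy this hypothesis: in the first two the compatibility of $<$ with multiplication forces $x^a > 1$ (respectively, $x^a < 1$) for every nonzero $a \in \NN^n$, while in the third all pairs of monomials are comparable by the definition of a monomial order.
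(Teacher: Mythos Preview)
Your proposal is correct and follows essentially the same route as the paper: assume $\Spec_\NP(\Kx_</I)\ne\emptyset$, take a proper ideal $J\supseteq I$ with $V(J/I)=\Spec_\NP(\Kx_</I)$, show $L_<(J)\subseteq(X)$, and invoke Theorem~\ref{locus} to conclude that $\PP$ fails at $\mm$. The only cosmetic difference is in the justification of $L_<(J)\subseteq(X)$: you argue element-wise via the characterization of units in $\Kx_<$ by $L_<(f)\in k^*$, while the paper first notes that $L_<(J)$ is a proper ideal (which follows from Theorem~\ref{equality}) and then uses that a proper $<$-homogeneous ideal must lie in $(X)$ under the hypothesis; both arguments are equivalent unpackings of the same fact.
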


\begin{proof}
  Assume that $\Spec_\NP(\Kx_</I) \ne \emptyset$. Then the ideal $J$ in
  Theorem~\ref{locus} can be chosen to be proper. Therefore $L_<(J)$
  is also a proper ideal, and from the hypothesis on $<$ and the fact
  that $L_<(J)$ is $<$-homogeneous it follows that $L_<(J) \subseteq
  (X)$. By Theorem~\ref{locus} this implies that $P$ does not hold at
  $\mathfrak{m}$.
\end{proof}

Moreover, we can also prove the descent of primality.  

\begin{Theorem} \label{tIntegral}%
Let $I$ be an ideal of $\Kx_<$  such that $L_<(I)$ is a prime ideal.
Then $I$ is a prime ideal.
\end{Theorem}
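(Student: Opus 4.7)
The plan is to combine the integral-weight approximation of Theorem~\ref{approx 2} with the homogenization machinery of Section~\ref{sWeight}, finally invoking Theorem~\ref{equality}.

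First I would pass to the contraction $I^* := I \cap \Kx$, so that $I = I^* \Kx_<$ and $L_<(I^*) = L_<(I) =: L$ by Lemma~\ref{leading}(b). Applying Theorem~\ref{approx 2} to the single ideal $I^*$, I may assume that $<$ is an integral weight order $<_w$ with $L = L_{<_w}(I^*)$. Form the homogenization $J := (I^*)^{\hom}$ inside $R := \Kx[t]$ with respect to $w$. By Lemma~\ref{iso}(a), $R/(J,t) \cong \Kx/L$ is a domain, so $J + (t)$ is itself a prime ideal of $R$.

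The decisive step is to extract a single minimal prime $P_j$ of $I^*$ whose leading ideal is \emph{exactly} $L$. Let $P_1,\ldots,P_r$ be the minimal primes of $I^*$; by \cite[Proposition~4.3.10]{KR}, the minimal primes of $J$ are $P_1^{\hom},\ldots,P_r^{\hom}$, so $\sqrt{J} = \bigcap_i P_i^{\hom}$. A short calculation, using $\prod_i P_i^{\hom} \subseteq \bigcap_i P_i^{\hom}$ for the nontrivial inclusion, yields
$$\sqrt{J + (t)} \;=\; \bigcap_{i=1}^r \sqrt{\bigl(P_i^{\hom},\, t\bigr)}.$$
Since $J + (t)$ is prime, the standard prime-intersection argument produces an index $j$ with $\sqrt{(P_j^{\hom}, t)} = J + (t)$. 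Reducing this equality modulo $t$ via $R/(t) \cong \Kx$ converts it to $\sqrt{L_<(P_j)} = L$; together with the inclusion $L \subseteq L_<(P_j)$ forced by $I^* \subseteq P_j$, this pins down $L_<(P_j) = L$ on the nose.

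From here the conclusion is formal. Since $L \neq \Kx$, also $L_<(P_j) \neq \Kx$, so $P_j \cap S_< = \emptyset$ and $P_j \Kx_<$ is a proper prime ideal of $\Kx_<$ containing $I$. By Lemma~\ref{leading}(a), $L_<(P_j \Kx_<) = L_<(P_j) = L = L_<(I)$, so Theorem~\ref{equality} applied to the inclusion $I \subseteq P_j \Kx_<$ gives $I = P_j \Kx_<$, which is prime. I expect the main obstacle to be the middle step: one has to leverage the primality of $J + (t)$ to produce a minimal prime of $I^*$ whose leading ideal equals $L$ exactly, not merely up to radical or as a containment.
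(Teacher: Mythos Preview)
Your argument is essentially correct and follows a genuinely different route from the paper, but there is one small slip that you should repair. When you write ``I may assume that $<$ is an integral weight order $<_w$'', you are silently changing the ambient ring from $\Kx_<$ to $\Kx_{<_w}$, and these need not coincide. What you actually need at the end is $L_<(P_j) = L$ for the \emph{original} preorder, so that Theorem~\ref{equality} applies in $\Kx_<$; your homogenization only yields $L_{<_w}(P_j) = L$. The fix is immediate: list the minimal primes $P_1,\ldots,P_r$ of $I^*$ first, and apply Theorem~\ref{approx 2} to the whole collection $I^*,P_1,\ldots,P_r$ so that $L_<(P_i) = L_{<_w}(P_i)$ for every~$i$ as well. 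Then your chain $L = L_{<_w}(P_j) = L_<(P_j) = L_<(P_j\Kx_<)$ goes through and Theorem~\ref{equality} finishes the job in the right ring.

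With that correction, your proof is valid but rather different from the paper's. The paper argues directly at the level of elements: it refines $<$ to a monomial order $<^*$ via a global monomial order $<'$, replaces $f,g \notin I$ by their weak normal forms so that $L_{<^*}(f), L_{<^*}(g) \notin L_{<^*}(I)$, and then uses Lemma~\ref{finer} to pull this back to $L_<(f), L_<(g) \notin L_<(I)$; primality of $L_<(I)$ then gives $L_<(fg) \notin L_<(I)$, hence $fg \notin I$. This is short and avoids both the homogenization machinery and the external reference to \cite[Proposition~4.3.10]{KR}. Your approach, by contrast, is structural: it actually identifies $I$ as $P_j\Kx_<$ for a specific minimal prime $P_j$ of $I^*$, which is a slightly stronger conclusion and perhaps a more satisfying explanation of \emph{why} $I$ is prime. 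The cost is a longer argument leaning on Section~\ref{sWeight} and on the Kreuzer--Robbiano description of the minimal primes of a homogenization.
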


\begin{proof} 
  Choose a global monomial order~$<'$ and let $<^*$ be the product of
  $<$ with~$<'$. 
Then $<^*$ is a monomial order, and $\Kx_{<^*} = \Kx_<$ by 
  Lemma \ref{finer}(c). 
 Let $G$ be a  standard basis of $I$ with respect to~$<^*$. 
We have to show that  if $f,g \in \Kx_< \setminus I$, then $f g \not\in I$.
Without restriction we may replace $f, g$ by their weak normal forms 
with respect to $G$ (see \cite[Definition~1.6.5]{GP}). 
Then $L_{<^*}(f) \notin L_{<^*}(I)$ and  $L_{<^*}(g) \notin L_{<^*}(I)$. Using Lemma~\ref{finer} we obtain
  \[
  L_{<'}\bigl(L_<(f)\bigr) = L_{<^*}(f) \notin L_{<^*}(I) =
  L_{<'}\bigl(L_<(I)\bigr),
  \]
  so $L_<(f) \notin L_<(I)$. Similarly, $L_<(g) \notin L_<(I)$.  By our
  hypothesis, this implies $L_<(f g) = L_<(f)L_<(g) \notin L_<(I)$,
  so $f g \not\in  I$ as desired.
\end{proof}

According to our philosophy that the leading ideal with respect to a
monomial preorder is a deformation that is ``closer'' to the original
ideal than the leading ideal with respect to a monomial order, it
would be interesting to see an example where $\Kx/L_<(I)$ is
Cohen-Macaulay but $\Kx/L_{<^*}(I)$ is not. If~$<$ is a monomial
preorder satisfying the hypothesis of the last statement from
Theorem~\ref{locus}, then the benefit arising from this is that the
Cohen-Macaulay property of $\Kx_</I$ can be verified by testing only
the maximal ideal $\mathfrak{m} := (X)/L_<(I)$ of
$\Kx/L_<(I)$. The following is such an example.

\begin{Example} \label{exCM}%
{\em Consider the ideal
    \[
    I = \bigl(x_1^2,x_2^2,x_3^3, x_1 x_2, x_1 x_3, x_1 x_4 - x_2 x_3 +
    x_1\bigr) \subseteq k[x_1,x_2,x_3,x_4].
    \]
    Let~$< = <_\w$ be the weight order with weight $\w = (1,1,1,1)$,
    and let~$<^*$ be the product of~$<$ and the lexicographic order
    with $x_1 < x_2 < x_3 < x_4$. So~$<^*$ is the graded lexicographic
    order, and it is easy to see by forming and reducing s-polynomials
    that the given basis of $I$ is a Gr\"obner basis with respect
    to~$<^*$. So by Theorem~\ref{standard}, $G$ it is also a standard
    basis with respect to~$<$. So
    \[
    L_<(I) = \bigl(x_1^2,x_2^2,x_3^3, x_1 x_2, x_1 x_3, x_1 x_4 - x_2
    x_3\bigr).
    \]
    From the leading ideal $L_{<^*}(I) = \bigl(x_1^2,x_2^2,x_3^3, x_1
    x_2, x_1 x_3, x_1 x_4\bigr)$ we see that the following elements
    form a vector space basis of $A := \Kx/L_<(I)$:
    \[
    \overline{x_4^i},\ \overline{x_2 x_4^i},\ \overline{x_3 x_4^i},\
    \overline{x_2 x_3 x_4^i} \quad (i \ge 0), \quad \text{and} \quad
    \overline{x_1}.
    \]
    Here the bars indicate the class in $A$ of a polynomial. Because
    $\overline{x_2 x_3 x_4^i} = \overline{x_1 x_4^{i+1}}$ this implies
    \[
    A = k[\overline{x_4}] \oplus k[\overline{x_4}] \cdot
    \overline{x_1} \oplus k[\overline{x_4}] \cdot \overline{x_2}
    \oplus k[\overline{x_4}] \cdot \overline{x_3},
    \]
    and $\overline{x_4}$ is transcendental. It follows that $A =
    \Kx/L_<(I)$ is Cohen-Macaulay, and so the same is true for
    $\Kx/I$.

    Now we turn to $A^* := \Kx/L_{<^*}(I)$. A vector space basis of
    $A^*$ is given as above, but now the bars indicate classes in
    $A^*$. So~$\overline{x_4}$ forms a homogeneous system of
    parameters, but it is not regular since $\overline{x_1}
    \overline{x_4} = 0$. Therefore $A^* = \Kx/L_{<^*}(I)$ is not
    Cohen-Macaulay. }
\end{Example}

In the following we will compare graded invariants of homogeneous ideals with those of its leading ideals. 
The following result is essentially due to Caviglia's proof of Sturmfels' conjecture on the Koszul property of the pinched Veronese \cite{Cav}.

\begin{Proposition} \label{Tor}
Let $I,J,Q$ be homogeneous ideals in $\Kx$. Then
$$\dim_k\Tor_i^{\Kx/I}(\Kx/J,\Kx/Q)_j \le \dim_k\Tor_i^{\Kx/L_<(I)}(\Kx/L_<(J),L_<(Q))_j$$
for all $i \in \NN$, $j \in \ZZ$.
\end{Proposition}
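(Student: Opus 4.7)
The plan is to degenerate $\Kx/I$, $\Kx/J$ and $\Kx/Q$ simultaneously to their leading quotients via one bigraded flat family over $k[t]$, and then read off the inequality from the universal coefficient theorem and the structure theorem for finitely generated graded modules over~$k[t]$.

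First I would reduce to a positive integral weight order. By Lemma~\ref{homogeneous}, replacing~$<$ by its product with the degree order yields a global monomial preorder with the same three leading ideals. Theorem~\ref{approx 2} then supplies $w \in \ZZ^n$ with $L_<(I)=L_{<_w}(I)$, $L_<(J)=L_{<_w}(J)$ and $L_<(Q)=L_{<_w}(Q)$, and since the three ideals are standard-homogeneous the leading ideals only depend on comparisons between monomials of equal total degree, so I may add $c(1,\ldots,1)$ to~$w$ without changing anything and arrange $w_i>0$ for all~$i$. Work in $R := \Kx[t]$ with the bigrading $\deg x_i = (1,w_i)$, $\deg t = (0,1)$; the homogenizations $I^{\hom}, J^{\hom}, Q^{\hom}$ are bihomogeneous, and under the (tacit) assumption $I \subseteq J \cap Q$ needed for the Tor in the statement to make sense we have $I^{\hom} \subseteq J^{\hom} \cap Q^{\hom}$. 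Set $A := R/I^{\hom}$, $M := R/J^{\hom}$, $N := R/Q^{\hom}$. By Proposition~\ref{flat} each of these is $k[t]$-flat, and $t-\lambda$ is a nonzerodivisor on each for every $\lambda\in k$.

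Next I would examine $T_{i,j}$, the standard-degree $j$ piece of $T_i := \Tor_i^A(M,N)$. This is a finitely generated graded $k[t]$-module, so the structure theorem decomposes it as $k[t]^{r_{i,j}} \oplus \bigoplus_\ell k[t](-d_\ell)/(t^{e_\ell})$ (the graded torsion must be of this form); let $s_{i,j}$ denote the number of torsion summands. Because $t-\lambda$ is a nonzerodivisor on $M$, we have $\Tor_{>0}^{A}\bigl(M, A/(t-\lambda)A\bigr) = 0$, so tensoring any $A$-free bigraded resolution $P_\bullet$ of $M$ with $A/(t-\lambda)A$ produces an $A/(t-\lambda)A$-free resolution of $M/(t-\lambda)M$. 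Combining this with the universal coefficient theorem applied to the $k[t]$-flat complex $P_\bullet \otimes_A N$ gives, for every $\lambda \in k$, a short exact sequence
\[
0 \to T_{i,j}\otimes_{k[t]} k[t]/(t-\lambda) \to \Tor_i^{\bar A_\lambda}\bigl(\bar M_\lambda,\bar N_\lambda\bigr)_j \to \Tor_1^{k[t]}\bigl(T_{i-1,j},k[t]/(t-\lambda)\bigr) \to 0,
\]
where $\bar A_\lambda := A/(t-\lambda)A$ and $\bar M_\lambda, \bar N_\lambda$ are defined analogously.

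Finally I would specialize. By Lemma~\ref{iso}(b), at $\lambda = 1$ the middle term is $\Tor_i^{\Kx/I}(\Kx/J,\Kx/Q)_j$; since $t-1$ acts as a unit on each torsion summand $k[t]/(t^e)$, the right-hand term vanishes and the middle has $k$-dimension $r_{i,j}$. By Lemma~\ref{iso}(a), at $\lambda=0$ the middle term is $\Tor_i^{\Kx/L_<(I)}\bigl(\Kx/L_<(J),\Kx/L_<(Q)\bigr)_j$, while the outer terms have $k$-dimensions $r_{i,j}+s_{i,j}$ and $s_{i-1,j}$; additivity of $k$-dimension along the sequence yields middle dimension $r_{i,j}+s_{i,j}+s_{i-1,j}\ge r_{i,j}$, which is exactly the claimed inequality. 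The main technical obstacle is the base-change step identifying the middle term of the sequence with the Tor of the fiber: this rests on the vanishing $\Tor_{>0}^A(M, A/(t-\lambda)A)=0$ and hence on the joint $k[t]$-flatness supplied by Proposition~\ref{flat}, and on the fact that this flatness persists when one restricts to the bigraded piece $T_{i,j}$.
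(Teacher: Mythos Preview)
Your proof is correct and follows the same reduction as the paper: use Lemma~\ref{homogeneous} and Theorem~\ref{approx 2} to replace~$<$ by a positive integral weight order, then invoke the semicontinuity of $\dim_k \Tor$ in the flat family over $k[t]$. The only difference is that the paper cites Caviglia~\cite[Lemma~2.1]{Cav} for this last step, whereas you have reproduced Caviglia's argument (bigraded homogenization, universal coefficients, and the structure theorem for $T_{i,j}$ over $k[t]$) in full; your justification for the positivity of~$w$ by adding a multiple of $(1,\dots,1)$ is also a bit more explicit than the paper's.
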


\begin{proof}
By Lemma \ref{homogeneous} we may assume that~$<$ is a monomial
preorder with $1 < x_i$ for all $i$.
Applying Theorem \ref{approx 2} to $I, J, Q$  
we can find $w \in \ZZ^n$ with $w_i > 0$ for all $i$
such that $L_<(I) = L_{<_w}(I)$,  $L_<(J) = L_{<_w}(J)$, and $L_<(Q) = L_{<_w}(Q)$.
For a positive weight vector $w$,  Caviglia \cite[Lemma 2.1]{Cav} already showed that 
$$\dim_k\Tor_i^{\Kx/I}(\Kx/J,\Kx/Q)_j \le \dim_k\Tor_i^{\Kx/L_{<_w}(I)}(\Kx/L_{<_w}(J),L_{<_w}(Q))_j$$
for all $i \in \NN$, $j \in \ZZ$.
\end{proof}

Recall that a $k$-algebra $R$ is called Koszul if $k$ has a linear free resolution as an $R$-module or, equivalently,
if $\Tor_i^R(k,k)_j = 0$ for all $j \neq i$.

\begin{Corollary}
Let $I$ be a homogeneous ideal in $\Kx$. 
If $k[X]/L_<(I))$ is a Koszul algebra, then so is $k[X]/I$.
\end{Corollary}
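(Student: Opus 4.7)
The plan is to specialize Proposition~\ref{Tor} to the case $J = Q = (X) := (x_1,\dots,x_n)$, the irrelevant maximal ideal of $\Kx$. The key observation is that the ideal $(X)$ is trivially $<$-homogeneous (each generator $x_i$ is a single monomial, hence equal to its own leading part), so
$$L_<\bigl((X)\bigr) = (X),$$
and therefore $\Kx/L_<(J) = \Kx/L_<(Q) = \Kx/(X) = k$. Likewise $\Kx/J = \Kx/Q = k$. Since $I$ is homogeneous, $L_<(I)$ is homogeneous too (by the corollary following Lemma~\ref{homogeneous}), so both $\Kx/I$ and $\Kx/L_<(I)$ are standard graded $k$-algebras and the $\Tor$ modules in question carry the standard internal grading used in the definition of Koszulness.

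With these substitutions, Proposition~\ref{Tor} becomes
$$\dim_k \Tor_i^{\Kx/I}(k,k)_j \;\le\; \dim_k \Tor_i^{\Kx/L_<(I)}(k,k)_j$$
for all $i \in \NN$ and $j \in \ZZ$. By the characterization of Koszulness recalled just before the corollary, the hypothesis that $\Kx/L_<(I)$ is Koszul means the right-hand side vanishes whenever $j \neq i$. Hence so does the left-hand side, which is precisely the statement that $\Kx/I$ is Koszul.

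There is essentially no obstacle here beyond recognizing the correct specialization: the entire work has already been done in Proposition~\ref{Tor}, whose proof in turn rests on the integral weight approximation of Theorem~\ref{approx 2} and Caviglia's $\Tor$ comparison for positive weight orders. The only verification needed is that the residue field $k$ is obtained from the same ideal $(X)$ on both sides of the inequality, which is immediate because $(X)$ is monomial.
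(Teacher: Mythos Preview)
Your proof is correct and follows exactly the same approach as the paper: specialize Proposition~\ref{Tor} with $J = Q = (X)$, use $L_<\bigl((X)\bigr) = (X)$, and read off the Koszul criterion from the resulting inequality on $\Tor$ dimensions. You have simply spelled out the justification for $L_<\bigl((X)\bigr) = (X)$ that the paper leaves implicit.
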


\begin{proof}
We apply Lemma \ref{Tor} to the case $J = Q = (X)$.
From this it follows that if $\Tor_i^{\Kx/L_<(I)}(k,k)_j = 0$, then $\Tor_i^{\Kx/I}(k,k)_j = 0$ for all $j \neq i$.
\end{proof}

For any finitely generated graded $\Kx$-module $E$, let $\beta_{i,j}(E)$ denote the number of copies of the graded free module $\Kx(-j)$ appearing in the $i$-th module of the resolution the largest degree of a minimal graded free resolution of $E$.
These numbers are called the {\em graded Betti numbers} of $E$. 
In some sense, these invariants determine the graded structure of $E$. 
It is well known that $\beta_{i,j}(E) = \dim_k \Tor_i^{\Kx}(E,k)_j$ for all $i \in \NN$, $j \in \ZZ$.

\begin{Proposition} \label{Betti}
Let $I$ be a homogeneous ideal in $\Kx$. 
Then $\beta_{i,j}(\Kx/I) \le \linebreak \beta_{i,j}(\Kx/L_<(I))$ for all $i \in \NN$, $j \in \ZZ$.
\end{Proposition}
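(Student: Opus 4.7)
The plan is to deduce this directly from Proposition~\ref{Tor} by specializing the three ideals $I, J, Q$ appearing there to the right objects. Recall the standard identification
\[
\beta_{i,j}(\Kx/E) = \dim_k \Tor_i^{\Kx}(\Kx/E,k)_j,
\]
so the goal is to produce an inequality between $\Tor^{\Kx}$'s against the residue field $k = \Kx/(X)$.

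Concretely, I would apply Proposition~\ref{Tor} to the triple $(0,\, I,\, (X))$ in place of $(I,J,Q)$. All three are homogeneous, so the hypotheses are satisfied. The two degenerations needed are $L_<((0)) = (0)$, which is trivial, and $L_<((X)) = (X)$; the latter holds because for every $f \in (X)$ each monomial of $f$ already lies in $(X)$, so in particular $L_<(f) \in (X)$, while conversely each $x_i$ equals $L_<(x_i)$. With these identifications Proposition~\ref{Tor} specializes to
\[
\dim_k \Tor_i^{\Kx}(\Kx/I,k)_j \;\le\; \dim_k \Tor_i^{\Kx}(\Kx/L_<(I),k)_j
\]
for all $i \in \NN$ and $j \in \ZZ$, and translating this via the Betti-number formula gives exactly the claimed inequality $\beta_{i,j}(\Kx/I) \le \beta_{i,j}(\Kx/L_<(I))$.

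There is no real obstacle: the substance of the argument has been absorbed into Proposition~\ref{Tor}, which is itself the conceptual heart (ultimately resting on Caviglia's lemma after approximating $<$ by a positive integral weight order via Theorem~\ref{approx 2} and reducing to the global case via Lemma~\ref{homogeneous}). The only point worth double-checking is the innocuous identity $L_<((X)) = (X)$, which is what makes the change-of-ring Tor in Proposition~\ref{Tor} collapse to the absolute Tor over $\Kx$ needed for the Betti numbers.
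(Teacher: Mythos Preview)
Your proof is correct and follows exactly the paper's approach: apply Proposition~\ref{Tor} with the triple $(0, I, (X))$ and use $L_<(0)=0$, $L_<((X))=(X)$ to reduce to the Betti-number identity. Your explicit verification of $L_<((X))=(X)$ is a small addition the paper leaves implicit.
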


\begin{proof}
We apply Lemma \ref{Tor} to the case $I = 0$, $Q = (X)$ and replace $J$ by $I$. Then
$$\dim_k\Tor_i^{\Kx}(\Kx/I,k)_j \le \dim_k\Tor_i^{\Kx}(\Kx/L_<(I),k)_j$$
which implies $\beta_{i,j}(\Kx/I) \le \beta_{i,j}(\Kx/L_<(I))$ for all $i \in \NN$, $j \in \ZZ$.
\end{proof}

Using the graded Betti numbers of $E$ one can describe other important invariants of $E$ such that the depth and the Castelnuovo-Mumford regularity:
\begin{align*}
\depth E & = n- \max\{i|\  \beta_{i,j} \neq 0 \text{ for some } j\},\\
\reg E & =  \max\{j-i|\ \beta_{i,j} \neq 0\}.
\end{align*}
By this definition, we immediately obtain from Proposition \ref{Betti} the following relationship between the depth and the regularity of $\Kx/I$ and $\Kx/L_<(I)$.

\begin{Corollary} \label{depth}
Let $I$ be a homogeneous ideal in $\Kx$. Then
\begin{align*}
\depth(\Kx/I)  & \ge  \depth (\Kx/L_<(I)),\\ 
\reg(\Kx/I)  & \le  \reg (\Kx/L_<(I)).
\end{align*}  
\end{Corollary}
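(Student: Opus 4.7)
The plan is to deduce Corollary \ref{depth} directly from Proposition \ref{Betti} using the homological descriptions of depth and regularity recalled immediately above the statement. Since Proposition \ref{Betti} already provides the term-by-term inequality $\beta_{i,j}(\Kx/I) \le \beta_{i,j}(\Kx/L_<(I))$, the remaining work is purely formal and there is no genuine obstacle.

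First I would record the set-theoretic consequence of the Betti inequality: if $\beta_{i,j}(\Kx/I) \ne 0$ then $\beta_{i,j}(\Kx/L_<(I)) \ne 0$, so
\[
\{(i,j) \mid \beta_{i,j}(\Kx/I) \ne 0\} \subseteq \{(i,j) \mid \beta_{i,j}(\Kx/L_<(I)) \ne 0\}.
\]
In particular the projection of this inclusion to the $i$-coordinate shows that
\[
\max\{i \mid \beta_{i,j}(\Kx/I) \ne 0 \text{ for some } j\} \le \max\{i \mid \beta_{i,j}(\Kx/L_<(I)) \ne 0 \text{ for some } j\}.
\]

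Next, applying the Auslander--Buchsbaum type formula $\depth E = n - \max\{i \mid \beta_{i,j}(E) \ne 0 \text{ for some } j\}$ recalled in the text, the inequality above yields $\depth(\Kx/I) \ge \depth(\Kx/L_<(I))$. Similarly, applying the formula $\reg E = \max\{j-i \mid \beta_{i,j}(E) \ne 0\}$ to the inclusion of index sets gives $\reg(\Kx/I) \le \reg(\Kx/L_<(I))$, since taking a maximum over a larger set can only increase it.

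The only minor subtlety worth a word of comment is that the maxima in question must be finite for the argument to make literal sense; this is ensured because $\Kx$ is a regular ring of dimension $n$, so every finitely generated graded $\Kx$-module has a minimal free resolution of length at most $n$, and hence only finitely many nonzero $\beta_{i,j}$ in each homological degree up to the Castelnuovo--Mumford regularity bound. With this in place, both inequalities follow immediately.
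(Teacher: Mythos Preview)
Your proposal is correct and follows exactly the approach the paper takes: the corollary is stated as an immediate consequence of Proposition~\ref{Betti} together with the formulas $\depth E = n - \max\{i \mid \beta_{i,j}(E) \ne 0\}$ and $\reg E = \max\{j-i \mid \beta_{i,j}(E) \ne 0\}$, and you have simply written out the one-line deduction in full.
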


Let $\mm$ denote the maximal homogeneous ideal of $\Kx$.
For any finitely generated graded $\Kx$-module $E$, we denote by $H_\mm^i(E)$  
the $i$-th local cohomology module of $E$ with respect to $\mm$ for all $i \in \NN$.
Note that $H_\mm^i(E)$ is a $\ZZ^n$-graded module.
As usual, we denote by $H_\mm^i(E)_j$ the $j$-th component of $H_\mm^i(E)$ for all $j \in \ZZ$.  
It is  known that the vanishing of $H_\mm^i(E)$ gives important information on the structure of $E$.  

\begin{Proposition} \label{Sbarra}
Let $I$ be a homogeneous ideal in $\Kx$. Then
$$\dim_k H_\mm^i(\Kx/I)_j \le \dim_k H_\mm^i(\Kx/L_<(I))_j$$ 
for all $i \in \NN, j \in \ZZ$.
\end{Proposition}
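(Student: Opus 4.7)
The plan is to reduce to the case of a positive integral weight order and then invoke Sbarra's result in that setting. First, by Lemma~\ref{homogeneous}, we may replace $<$ by the product $<^*$ of the degree order with $<$ without changing the leading ideal of~$I$. So we may assume from the outset that $1 < x_i$ for all~$i$.

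Next, I would apply Theorem~\ref{approx 2} to the single ideal~$I$ to produce an integral weight vector $w \in \ZZ^n$ with $L_<(I) = L_{<_w}(I)$. Since $1 < x_i$ for all~$i$, the approximation argument of Lemma~\ref{approx 1} (together with the freedom to add a positive multiple of $(1,\dots,1)$ to the weight vector used there) can be arranged so that $w_i > 0$ for all~$i$. Note that both $\dim_k H_\mm^i(\Kx/I)_j$ and $\dim_k H_\mm^i(\Kx/L_<(I))_j$ depend only on the standard grading of $\Kx$, not on~$w$, so after this reduction the desired inequality becomes a statement about the positive weight order~$<_w$ and the associated homogenization $I^{\hom} \subset R = k[X,t]$.

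At this point I would invoke the result of Sbarra (see the paper cited in the surrounding discussion, and compare also \cite{Cav} for the $\Tor$-analogue used in Proposition~\ref{Tor}) which asserts precisely that for a positive integral weight order~$<_w$,
\[
\dim_k H_\mm^i(\Kx/I)_j \le \dim_k H_\mm^i(\Kx/L_{<_w}(I))_j
\]
for all $i \in \NN$, $j \in \ZZ$. The underlying mechanism is the flat family $R/I^{\hom}$ over $k[t]$ from Proposition~\ref{flat} and the isomorphisms of Lemma~\ref{iso}, combined with upper semicontinuity of the dimensions of the graded pieces of local cohomology in a flat one-parameter family: the generic fiber (obtained by inverting~$t$) recovers $\Kx/I$ up to the invertible shift by~$t$, while the special fiber at $t = 0$ is $\Kx/L_{<_w}(I)$.

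The main obstacle is ensuring that we may take the approximating weight vector~$w$ to be strictly positive; once this is done, Sbarra's theorem applies directly. The strict positivity is guaranteed by the preliminary reduction through Lemma~\ref{homogeneous}, since $1 <^* x_i$ for all~$i$ forces every $w$ produced by the approximation procedure of Lemma~\ref{approx 1} to satisfy $w \cdot e_i > 0$, i.e.\ $w_i > 0$. This completes the proof strategy.
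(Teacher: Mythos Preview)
Your proposal is correct and follows essentially the same route as the paper: approximate $<$ by an integral weight order via Theorem~\ref{approx 2} and then invoke Sbarra's inequality for weight orders. The only difference is that the paper is slightly more direct---it asserts that Sbarra's proof already yields the inequality for an \emph{arbitrary} integral weight vector $w\in\ZZ^n$, so the preliminary reduction through Lemma~\ref{homogeneous} to secure $w_i>0$ is unnecessary; your extra step is harmless (and in fact mirrors exactly what the paper does in the proof of Proposition~\ref{Tor}). One small imprecision: it is not true that \emph{every} $w$ produced by Lemma~\ref{approx 1} automatically has $w_i>0$; rather, one arranges this by including $1,x_1,\dots,x_n$ in the finite set $S$, or---as you also note---by adding a large multiple of $(1,\dots,1)$, which leaves $L_{<_w}(I)$ unchanged since $I$ is homogeneous.
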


\begin{proof}
Sbarra \cite[Theorem 2.4]{Sb} already proved the above inequality for an arbitrary global monomial order. 
Actually, his proof shows that for an arbitrary integral vector $<_w$, 
$$\dim_k H_\mm^i(\Kx/I)_j \le \dim_k H_\mm^i(\Kx/L_{<_w}(I))_j$$ 
for all $i \in \NN, j \in \ZZ$. 
By Theorem \ref{approx 2}, there exists $w \in \ZZ^n$ such that $L_<(I) = L_{<_w}(I)$. 
Therefore, Sbarra's result implies the conclusion.
\end{proof}

Let $R$ be a standard graded algebra over an infinite field $k$ with $d = \dim R$.  
An ideal $Q$ of $R$ is called a {\em minimal reduction} of $R$  if $Q$ is generated by 
a system of linear forms $z_1,\ldots,z_d$ such that 
$k[z_1,\ldots,z_d] \hookrightarrow R$ is a Noether normalization.
Let $r_Q(R)$ denote the maximum degree of the generators of $R$ as a graded $k[z_1,\ldots,z_d]$-module.  One calls  the invariant
$$ r(R) := \min\{r_Q(R)|\ \text{$Q$ is a minimal reduction of $R$}\}$$
the {\it reduction number} of $R$ \cite{Va}. 
\par

The following result on the reduction number of the leading ideal was a conjecture
of Vasconcelos for global monomial orders \cite[Conjecture 7.2]{Va}.  
This conjecture has been confirmed independently by Conca \cite[Theorem 1.1]{Co} 
and the second author \cite[Corollary 3.4]{Tr}. Now we can prove it for monomial preorders.  

\begin{Proposition} \label{reduction}
Let $I$ be an arbitrary homogeneous ideal in $\Kx$. Then
$$r(\Kx/I)  \le r(\Kx/L_<(I)).$$  
\end{Proposition}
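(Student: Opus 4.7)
The strategy mirrors the proofs of Propositions~\ref{Tor}, \ref{Betti}, and \ref{Sbarra}: reduce the statement from an arbitrary monomial preorder to a positive integral weight order, for which the inequality is already available in the literature.

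Since $I$ is homogeneous, Lemma~\ref{homogeneous} allows us to replace $<$ by its product with the degree order, so that without loss of generality $1 < x_i$ for every $i$, while $L_<(I)$ is unchanged. Applying Theorem~\ref{approx 2} to the single ideal $I$, and augmenting the finite set of monomials used in the underlying application of Lemma~\ref{approx 1} by $\{1, x_1, \ldots, x_n\}$, we obtain a weight vector $w \in \ZZ^n$ with all $w_i > 0$ such that $L_{<_w}(I) = L_<(I)$. It therefore suffices to prove $r(\Kx/I) \le r(\Kx/L_{<_w}(I))$ for a positive integral weight order $<_w$.

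For such an order, Lemma~\ref{iso} and Proposition~\ref{flat} realize $\Kx[t]/I^{\hom}$ as a flat $k[t]$-algebra whose special fiber at $t=0$ is $\Kx/L_{<_w}(I)$ and whose fibers at $t \neq 0$ are all isomorphic to $\Kx/I$, each carrying the standard grading. The sought inequality is exactly the upper semi-continuity of the reduction number along this one-parameter flat family. For a global monomial order this is the content of \cite[Theorem 1.1]{Co} and \cite[Corollary 3.4]{Tr}; the arguments there rest solely on the flat deformation and on standard properties of reductions in graded algebras, so they transfer without modification to any positive integral weight order and give $r(\Kx/I) \le r(\Kx/L_{<_w}(I)) = r(\Kx/L_<(I))$.

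The main obstacle is thus not in the reduction to a weight order but in verifying that the proofs of Conca and of the second author extend verbatim from global monomial orders to positive integral weight orders. This is a matter of inspection: once one checks that no use is made of the totality of the order beyond what is encoded in the existence of the flat family from Proposition~\ref{flat}, the proposition follows.
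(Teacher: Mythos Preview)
Your overall strategy---reduce to an integral weight order via Theorem~\ref{approx 2} and then invoke a known result---is exactly what the paper does. The difference is that the paper's proof is shorter: it cites \cite[Theorem~3.3]{Tr}, which already establishes $r(\Kx/I) \le r(\Kx/L_{<_w}(I))$ for an \emph{arbitrary} integral weight order~$<_w$, not just for global monomial orders. Hence the paper needs neither the preliminary reduction to a positive weight via Lemma~\ref{homogeneous}, nor your final paragraph arguing that the proofs in \cite{Co} and \cite{Tr} transfer from monomial orders to positive weight orders. Your route is not wrong, just more circuitous; once you know that \cite[Theorem~3.3]{Tr} is stated at the generality of weight orders, the proof collapses to two lines.
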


\begin{proof}
By Theorem \ref{approx 2}, there exists $w \in \ZZ^n$ such that $L_<(I) = L_{<_w}(I)$.
By \cite[Theorem 3.3]{Tr}, we know that
$r(\Kx/I)  \le r(\Kx/L_{<_w}(I))$
for an arbitrary weight order $<_w$. 
\end{proof}


\end{document}